\def\pb{}
\def\beq{\begin{equation} }\def\eeq{\end{equation} }\def\1{\mathbf{1}}
\numberwithin{equation}{section}
\newtheorem{lemma}{Lemma}
\newtheorem{theorem}{Theorem}
\newtheorem{corollary}[theorem]{Corollary}
\newtheorem{remark}{Remark}
\newtheorem{assumption}{Assumption}
\newcommand{\cO}{\mathcal{O}}
\newcommand{\EE}{\mathbb{E}}
\newcommand{\RR}{\mathbb{R}}
\newcommand{\bZ}{\bm{Z}}
\newcommand{\xb}{\mathbf{x}}
\newcommand{\ub}{\bm{u}}
\newcommand{\cF}{\mathcal{F}}
\newcommand{\x}{\bm{x}}
\newcommand{\z}{\bm{z}}
\def\cK{\mathcal{K}}
\newcommand{\db}{\bm{d}}
\newcommand{\Ub}{\bm{U}}
\newcommand{\y}{\bm{y}}
\newcommand{\Cd}{C_{d,\delta}}
\newcommand{\cN}{\mathcal{N}}
\newcommand{\cE}{\mathcal{E}}
\newcommand{\bb}{\bm{b}}
\newcommand{\heta}{\hat{\eta}}
\newcommand{\tbZ}{\widetilde{\bZ}}
\newcommand{\Us}{U_s}
\begin{document}
\title{
Explicit and Non-asymptotic Query Complexities of
Rank-Based Zeroth-order Algorithm on Stochastic Smooth Functions
}

\author{
	Haishan Ye
	\thanks{
		Xi'an Jiaotong University;
		email: hsye\_cs@outlook.com}
}
\date{
	\today}

\maketitle

\def\RB{\RR}
\def\TH{\tilde{H}}
\newcommand{\ti}[1]{\tilde{#1}}
\def\diag{\mathrm{diag}}
\newcommand{\norm}[1]{\left\|#1\right\|}
\newcommand{\dotprod}[1]{\left\langle #1\right\rangle}
\def\EB{\EE}
\def\tr{\mathrm{tr}}

\begin{abstract}
Zeroth-order (ZO) optimization with ordinal feedback has emerged as a fundamental problem in modern machine learning systems, particularly in human-in-the-loop settings such as reinforcement learning from human feedback, preference learning, and evolutionary strategies. 
While rank-based ZO algorithms enjoy strong empirical success and robustness properties, their theoretical understanding, especially under stochastic objectives and standard smoothness assumptions, remains limited. 
In this paper, we study rank-based zeroth-order optimization for stochastic functions where only  ordinal feedback of the stochastic function is available. 
We propose a simple and computationally efficient rank-based ZO algorithm. 
Under standard assumptions including smoothness, strong convexity, and bounded second moments of stochastic gradients, we establish explicit non-asymptotic query complexity bounds for both convex and nonconvex objectives. 
Notably, our results match the best-known query complexities of value-based ZO algorithms, demonstrating that ordinal information alone is sufficient for optimal query efficiency in stochastic settings. Our analysis departs from existing drift-based and information-geometric techniques, offering new tools for the study of rank-based optimization under noise. These findings narrow the gap between theory and practice and provide a principled foundation for optimization driven by human preferences.
\end{abstract}


\pb\section{Introduction}

Zeroth-order (ZO) optimization, the task of minimizing functions when gradient information is unavailable, has become increasingly important across machine learning \citep{loshchilov2016cma}, reinforcement learning \citep{igel2003neuroevolution}, and human-in-the-loop decision systems \citep{tangzeroth}. 
Traditional ZO methods rely on numerical function values to approximate descent directions, but many modern applications provide only ordinal feedback rather than scalar outputs. Examples include evolutionary strategies \citep{hansen2001completely}, comparative preference elicitation \citep{yue2009interactively}, online ranking platforms \citep{dwork2001rank}, and, notably, human evaluation of AI outputs \citep{bai2022training}. These settings challenge classical optimization frameworks and motivate the design of algorithms that can optimize via  non-numeric feedback.

A particularly striking instantiation arises in Reinforcement Learning from Human Feedback (RLHF): humans provide relative rankings instead of explicit scores when judging outputs, reducing cognitive burden and enabling preference modeling at scale \citep{ouyang2022training,liu2023languages,bai2022training}. RLHF has powered improvements in aligning large language and generative models with human intentions, but also exposes an underlying optimization challenge: how to efficiently optimize a black-box objective when only ranking or preference oracles are available to evaluate candidate solutions.

Rank-based optimization sits at the center of this landscape. Algorithms that exploit only order information, such as ranking or comparison oracles, are attractive for their robustness to noise and invariant behavior under monotonic transformations of the underlying objective. 
This robustness is one reason why rank-based strategies underlie successful heuristics like CMA-ES \citep{hansen2001completely} and natural evolution strategies \citep{wierstra2014natural} in evolutionary computation. 
Yet, despite their empirical success, theoretical foundations for rank-centric ZO methods have been incomplete: existing convergence analyses typically focus on asymptotic behavior or narrow models of evolutionary heuristics, offering limited insight into finite-sample efficiency. 

The recent work by \citet{tangzeroth} addresses optimizing using ranking oracles, which well-aligns with human preference data and other ordinal signals. 
They propose ZO-RankSGD, a novel zeroth-order algorithm driven by a rank-based random estimator that produces descent directions from ordinal feedback alone, and they prove its convergence to stationary points. 
However, ZO-RankSGD requires  constructing a directed acyclic graph of $\cO(N^2)$ edges where $N$ is the sample size.
This makes the construction of descent direct computation inefficient. 
Though  \citet{tangzeroth} show ZO-RankSGD can find an $\varepsilon$-stationary point at cost at most $\cO\left(\frac{d}{\varepsilon^2}\right)$, their analysis does \emph{not} based on the standard assumptions of stochastic gradient descent, such as the stochastic gradient $\nabla f(\x;\;\xi)$  being of bounded second moment. 
Another weakness is that their convergence analysis can \emph{not} be used to obtain the convergence rate when the objective function is smooth and convex.   

In parallel, a substantial body of work in online bandit optimization studies ZO algorithms with comparison oracles, which can be viewed as a special case of rank-based feedback \citep{yue2009interactively, kumagai2017regret, zhu2023principled}. A key limitation of these approaches is that their analyses are intrinsically tied to pairwise comparisons and do not extend to richer ordinal information such as full rankings, which are particularly relevant in human-in-the-loop scenarios. Furthermore, these works typically assume convex or strongly convex objectives, limiting their applicability to the nonconvex settings prevalent in modern machine learning.

Complementing these lines of research, \citet{ye2025explicit} investigates rank-based ZO algorithms that operate on the order statistics of sampled directions and derive the first explicit non-asymptotic query complexity bounds for a class of rank-based updates under smoothness assumptions. Their results demonstrate that, with carefully designed sampling and selection mechanisms, rank-only methods can approach the efficiency of value-based ZO algorithms in terms of query complexity. However, their analysis assumes access to deterministic function values $f(\mathbf{x})$. In contrast, our work focuses on stochastic optimization settings where only noisy evaluations $f(\mathbf{x};\xi)$ are available, with $\mathbb{E}_\xi[f(\mathbf{x};\xi)] = f(\mathbf{x})$. Unfortunately, the techniques developed in \citet{ye2025explicit} do not readily extend to this stochastic regime.

Despite the recent progress outlined above, rank-based zeroth-order optimization remains far from being fully understood, and many fundamental questions are still open. From a theoretical standpoint, a central challenge lies in developing unified convergence analyses that accommodate both rich ordinal feedback (beyond pairwise comparisons) and stochastic objective evaluations under standard assumptions such as smoothness and bounded variance. Existing works typically address only isolated aspects of this problem—either deterministic objectives with full rankings, or stochastic objectives with limited comparison oracles—leaving a substantial gap between theory and practice.

Furthermore, from a broader perspective, it is still unclear what information is fundamentally necessary and sufficient for efficient zeroth-order optimization. Ranking feedback occupies an intermediate position between full value queries and binary comparisons, yet the precise trade-offs between feedback richness, query complexity, and convergence rates are not well characterized. Clarifying these trade-offs is essential for principled algorithm design and for understanding the limits of optimization driven by human feedback.

Addressing the above challenges is critical for advancing both the theory and practice of optimization with ordinal information. 
This paper tries to solve these open problems and summarize our contribution as follows:
\begin{itemize}
	\item We propose a simple rank-based ZO algorithm described in Algorithm~\ref{alg:SA1}. 
	Compared with ZO-RankSGD \citep{tangzeroth} which requires to construct a directed acyclic graph of $\cO(N^2)$ edges and takes $\cO(d N^2)$ computation cost for each iteration,
	our algorithm does \emph{not} need any graph and takes $\cO(d N)$ computation cost.
	\item Condition on $L$-smoothness, $\mu$-strong convex, and $G_u^2$-bounded second moment of stochastic gradient, we show that Algorithm~\ref{alg:SA1} can achieve a query complexity $\cO\left( \frac{dLG_u^2}{\mu^2\varepsilon}\right)$. 
	If the objective function may be nonconvex, to find an $\varepsilon$-stationary point, the query complexity of our algorithm is $\cO\left(\frac{dLG_u^2}{\varepsilon^2}\right)$. 
	To the best of the author's knowledge, these query complexities for rank-based ZO over stochastic functions are first proved.
	\item 
	The obtained explicit query complexities are the same to the query complexities of valued-based ZO algorithms \citep{wangadvancement,ghadimi2013stochastic}.
	This proves that for the ZO over stochastic functions, the ordinal information is sufficient to achieve the same query efficiency as the value-based ZO algorithms.
	\item The analysis techniques in this paper are novel and do not rely on  drift analysis \citep{he2001drift} and information-geometric optimization \citep{ollivier2017information}, which are popular in the convergence analysis of evolution strategies algorithms. 
	The technique in this paper is also not an easy extension of the one in \citet{ye2025explicit}.
	Thus, we believe the analysis
	technique has potential in the analysis of other rank-based algorithms on stochastic functions.
\end{itemize}

\section{Notation and Preliminaries}

First, we will introduce the notation of $\norm{\cdot}$ which is defined as $\norm{\x} \triangleq \sqrt{\sum_{i=1}^{d} x_i^2}$ for a $d$-dimension vector $\x$ with $x_i$ being $i$-th entry of $\x$. 
Letting $\Ub\in\RR^{m\times n}$ be a matrix, the notation $\norm{\Ub}$ is the spectral norm of $\Ub$.
Given two vectors $\x,\y \in \RR^d$, we use $\dotprod{\x,\y} = \sum_{i=1}^{d}x_iy_i$ to denote the inner product of $\x$ and $\y$. 
Next, we introduce the set $\cK$  defined as $\cK := \{1,\dots, N/4, 3N/4 + 1,\dots, N\}$ where $N$ is the sample size for each iteration.

Next, we will introduce the objective function, which we aim to solve in this paper
\begin{equation}\label{eq:det}
	\min_{\x \in \RR^d} f(\x) = \EE_\xi\left[f(\x;\;\xi)\right]
\end{equation}
where $f(\x)$ and $f(\x;\;\xi)$ are  $L$-smooth functions. 
We will also introduce several widely used assumptions about the objective function.

\begin{assumption}\label{ass:L}
	A function $f(\x)$ is called $L$-smooth with $L>0$ if for any $\x,\y\in\RR^d$, it holds that
	\begin{equation}\label{eq:L}
		| f(\y) - f(\x) - \dotprod{\nabla f(\x), \y -\x} | \leq\frac{L}{2}\norm{\y - \x}^2.
	\end{equation}
\end{assumption}

\begin{assumption}
	A function $f(\x)$ is called $\mu$-strongly convex with $\mu>0$ if for any $\x,\y\in\RR^d$, it holds that
	\begin{equation}
		f(\y) - f(\x) - \dotprod{\nabla f(\x), \y -\x} \geq \frac{\mu}{2} \norm{\y - \x}^2.
	\end{equation}
\end{assumption}
The $L$-smooth and $\mu$-strongly convex assumptions are standard in the convergence analysis of optimization \citep{nesterov2013introductory}.

Next, we will introduce the assumptions with respect to the stochastic gradient $\nabla f(\x;\;\xi)$.
\begin{assumption}[Bounded Second Moment]\label{ass:bsm}
There exists a constant $G>0$ such that
\begin{equation}
	\EE_\xi\left[\norm{\nabla f(\x;\;\xi)}^2\right] \leq G_u^2 \mbox{ for all }\x.
\end{equation}
\end{assumption}
\begin{assumption}\label{ass:lb}
	We also assume that $\norm{\nabla f(\x\;\xi)} \geq G_\ell$ with $G_\ell >0$ for all $\x$.
\end{assumption}
Assumption~\ref{ass:bsm} is a popular assumption in the convergence analysis of stochastic gradient descent \citep{nemirovski2009robust,moulines2011non}. 
Assumption~\ref{ass:lb} is also reasonable because the stochastic gradient  $\nabla f(\x;\;\xi)$ never vanish even $\x_t$ tends to the optimal point \citep{bottou2018optimization,moulines2011non}.
Furthermore, Assumption~\ref{ass:lb} is only used to set the smooth parameter of Algorithm~\ref{alg:SA1}, which does not affect the query complexity of our algorithm.

\begin{algorithm}[t]
	\caption{Rank-Based Zeroth-order Algorithm for Stochastic Smooth Function}
	\label{alg:SA1}
	\begin{small}
		\begin{algorithmic}[1]
			\STATE {\bf Input:}
			Initial vector $x_1$, smooth parameter $\alpha > 0$,  sample size $N$, and step size $\eta_t$, the rank oracle $\mathrm{Rank\_Oracle}(\cdot)$.
			\FOR {$t=1,2,\dots, T$ }
			\STATE Generate $N$ random Gaussian vectors $\ub_i$ with $i = 1,\dots, N$. 
			\STATE Access the rank oracle $\mathrm{Rank\_Oracle}(\x_t + \alpha \ub_1,\dots, \x_t + \alpha \ub_N)$ and obtain an index $(1),\dots,(N)$ satisfying
			\begin{equation*}
				f(\x_t + \alpha \ub_{(1)};\;\xi_t) \leq f(\x_t + \alpha \ub_{(2)};\;\xi_t)\leq \dots  \leq \dots f(\x_t + \alpha \ub_{(N)};\;\xi_t).
			\end{equation*}
			\STATE Set $w_{(k)}^+ = \frac{4}{N} $ with $k = 1,\dots, \frac{N}{4}$ and $w_{(k)}^- =-\frac{4}{N}$ with $k = \frac{3N}{4}+1\dots, N$ such that $\sum w_{(k)}^+ = 1$ and $\sum w_{(k)}^- = -1$. 
			\STATE Construct the descent direction
			\begin{equation*}
				\db_t 
				= 
				\sum_{k=1}^{N/4} w_{(k)}^+ \ub_{(k)}
				+ \sum_{k=3N/4+1}^{N} w_{(k)}^- \ub_{(k)}
				=
				\frac{4}{N}\sum_{k=1}^{N/4} w_{(k)}^+ \ub_{(k)}
				-
				\frac{4}{N}\sum_{k=3N/4+1}^{N} w_{(k)}^- \ub_{(k)}.
			\end{equation*}
			\STATE Update as 
			\begin{equation*}
				\x_{t+1} = \x_t + \eta_t  \db_t.   
			\end{equation*}
			\ENDFOR
			\STATE {\bf Output:} $\x_{T+1}$
		\end{algorithmic}
	\end{small}
\end{algorithm}

\section{Algorithm Description}

We generate $N$ random Gaussian vectors $\ub_i$ with $i = 1,\dots, N$, that is, $\ub_i \sim \cN(0, \bm{I}_d)$. 
Accordingly, we can get $N$ points $\x_t + \alpha \ub_i$'s. 
Then, we send these points to the rank oracle, that is, $\mathrm{Rank\_Oracle}(\x_t + \alpha \ub_1,\dots, \x_t + \alpha \ub_N)$.
Then we obtain an index $(1),\dots, (N)$ such that
\begin{equation}\label{eq:sort}
	f(\x_t + \alpha \ub_{(1)};\;\xi_t) \leq f(\x_t + \alpha \ub_{(2)};\;\xi_t)\leq \dots  \leq \dots f(\x_t + \alpha \ub_{(N)};\;\xi_t).
\end{equation}

For $1\le k \leq N/4$, these points $\x_t + \alpha \ub_{(k)}$ achieve the $\frac{N}{4}$ smallest values. 
We believe that these $\ub_{(k)}$ are probably descent directions.  
Thus, we  give positive weights on these directions. 
Accordingly, we set $w_{(k)}^+ > 0$ with $k = 1,\dots, N/4$ and $\sum w_{(k)}^+ = 1$. 
On the other hand, for $ N/4 + 1\le k \leq N$, 
these points $\x_t + \alpha \ub_{(k)}$ achieve the $\frac{N}{4}$ largest values.
We believe that these $\ub_{(k)}$ are probably ascent directions.  
Thus, we want to give negative weights on these directions. 
Accordingly, we set $w_{(k)}^- < 0 $ with $k = 3N/4+1\dots, N$ and $\sum w_{(k)}^- = -1$.
In this paper, we set $w_{(k)}^+ = \frac{4}{N}$ and $w_{(k)}^- = - \frac{4}{N}$.
This weight strategy is also very popular \citep{hansen2016cma}.

After obtaining these weights $w_{(k)}$'s, we can obtain the descent direction 
\begin{equation}\label{eq:dt}
	\db_t 
	= \sum_{k=1}^{N/4} w_{(k)}^+ \ub_{(k)}
	+ \sum_{k=3N/4+1}^{N} w_{(k)}^- \ub_{(k)}
	=
	\db_t^+ 
	+
	\db_t^-.
\end{equation}

Finally, we will update $\x_t$ along the direction $\db_t$ with a step size $\eta_t$ and obtain $\x_{t+1}$ as follows:
\begin{equation}\label{eq:update}
	\x_{t+1} = \x_t + \eta_t \db_t.
\end{equation}  
The detailed algorithm procedure is listed in Algorithm~\ref{alg:SA1}.

\section{Convergence Analysis and Query Complexities}

First, we will introduce several important notations. 
The first two  are 
\begin{equation}\label{eq:z}
	\z_t := \frac{\nabla f(\x_t;\;\xi_t)}{\norm{\nabla f(\x_t;\;\xi_t)}},\quad\mbox{ and }\quad \bZ_t^\perp,
\end{equation}
where $\bZ_t^\perp \in\RR^{d\times (d-1)}$ is an orthonormal matrix and orthogonal to $\z_t$. 
It is easy to check that
\begin{equation}\label{eq:decom}
\bm{I}_d 
= \z_t\z_t^\top + \left(\bm{I}_d - \z_t\z_t^\top\right) 
= \z_t\z_t^\top +  \bZ_t^\perp [\bZ_t^\perp]^\top.
\end{equation}

Next, we will introduce the index $\{i_k^t\}$ with $k = 1,\dots, N$ which satisfies the following property
\begin{equation}\label{eq:idx}
	\dotprod{\z_t, \ub_{i_1^t}} 
	\leq 
	\dotprod{\z_t, \ub_{i_2^t}} 
	\leq
	\dots
	\leq
	\dotprod{\z_t, \ub_{i_N^t}}. 
\end{equation}
These notations will be widely used in this section.

Next, we will define a list of events which hold with high probabilities. 
Conditioned on these events, we can prove that the direction $\db_t$ in Eq.~\eqref{eq:dt} can achieve a sufficient value decay for each iteration.
\subsection{Events}
Assume the function $f(\x;\;\xi)$ is $L$-smooth. 
Given $0<\delta<1$, we will define several events as follows:
\begin{align}
	&\cE_{t,1} := \left\{ |D_\xi(\x_t + \alpha \ub_{(k)},\;\x_t)| \leq \Cd L\alpha^2 \mid k = 1,\dots, \frac{N}{4}, \frac{3N}{4}+1, \dots, N \right\} \label{eq:E1}\\
	&\cE_{t,2} := \left\{ \norm{\db_t}^2 \leq \frac{8\Cd}{N}   \right\} \label{eq:E2}\\
	&\cE_{t,3} := \left\{\sum_{k=1}^{N/4}  \dotprod{\z_t, \ub_{i_k}} \leq	\sum_{k=1}^{N/4} \dotprod{\z_t, \ub_{(k)}} \leq \sum_{k=1}^{N/4}  \dotprod{\z_t, \ub_{i_k}} 
	+ 
	\frac{N \Cd L \alpha}{2\norm{\nabla f(\x_t;\;\xi_t)}}\right\} \label{eq:up1}\\
	&\cE_{t,4} := \left\{\sum_{k=3N/4+1}^{N}  \dotprod{\z_t, \ub_{i_k}} \geq	\sum_{k=3N/4+1}^{N} \dotprod{\z_t, \ub_{(k)}} \geq \sum_{k=3N/4+1}^{N}  \dotprod{\z_t, \ub_{i_k}} 
	- 
	\frac{N \Cd L \alpha}{2\norm{\nabla f(\x_t;\;\xi_t)}}\right\} \label{eq:up2}\\
	&\cE_{t,5} := \left\{ \dotprod{\z_t, \ub_{i_{3N/4+1}}} \geq 2,\quad\mbox{and}\quad \dotprod{\z_t, \ub_{i_{N/4}}} \leq -2 \right\}, \label{eq:E5}
\end{align} 
where $D_\xi(\cdot,\cdot)$, and $\Cd$ are defined in Eq.~\eqref{eq:dyx} and Eq.~\eqref{eq:d_up},  respectively.

Next, we will bound the probabilities of the above events happening.

\begin{lemma}\label{lem:E1}
	Assume that function $f(\x;\;\xi)$ is $L$-smooth.
	Given $0<\delta<\frac{2}{N}$, we have
	\begin{equation}\label{eq:d_up}
		\Pr(\cE_{t,1}) \geq 1 - \frac{N\delta}{2}, \mbox{ with } \Cd \triangleq d+2\log\frac{1}{\delta},
	\end{equation}
	with $D_\xi(\y,\x)$ defined as 
	\begin{equation}\label{eq:dyx}
		D_\xi(\y,\x) := f(\y;\;\xi) - f(\x;\;\xi) - \dotprod{\nabla f(\x;\;\xi), \y -\x}.
	\end{equation}
\end{lemma}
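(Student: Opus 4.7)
The proof proposal is a straightforward combination of $L$-smoothness with a Gaussian (chi-squared) tail bound and a union bound, so I would organize it in three short moves.

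First, I would reduce the event $\cE_{t,1}$ to a deterministic bound on the norms of the Gaussian samples. By Assumption~\ref{ass:L} applied to $f(\cdot;\xi)$, the quantity $D_\xi(\y,\x)$ satisfies $|D_\xi(\x_t+\alpha\ub,\x_t)| \leq \tfrac{L}{2}\alpha^2\norm{\ub}^2$ for any fixed direction $\ub$. Hence, to guarantee $|D_\xi(\x_t+\alpha\ub_{(k)},\x_t)|\le \Cd L\alpha^2$ for every $k\in\cK$, it is enough to show $\norm{\ub_{(k)}}^2 \leq 2\Cd$ for all such $k$. Because the sorting $(1),\dots,(N)$ is merely a (data-dependent) permutation of $\{1,\dots,N\}$, this is in turn implied by the stronger event $\max_{1\le i\le N}\norm{\ub_i}^2 \leq 2\Cd$, which no longer depends on the ordering.

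Second, I would control each $\norm{\ub_i}^2$ using a standard chi-squared tail inequality. Since $\ub_i\sim\cN(0,\bm I_d)$, $\norm{\ub_i}^2$ is distributed as $\chi^2_d$, and the Laurent--Massart bound gives $\Pr(\norm{\ub_i}^2 \geq d + 2\sqrt{dt} + 2t) \leq e^{-t}$. Choosing $t$ so that $e^{-t}\le \delta/2$ (i.e., $t = \log(2/\delta)$) and using the elementary inequality $2\sqrt{dt}\le d+t$ yields $\norm{\ub_i}^2 \le 2d + 3t \le 2d + 4\log(1/\delta) = 2\Cd$ with probability at least $1-\delta/2$. The hypothesis $\delta < 2/N$ is available to absorb the $\log 2$ term into $\log(1/\delta)$, which is the only place where a small amount of bookkeeping on constants is needed.

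Finally, I would take a union bound over the $N$ independent Gaussian vectors $\ub_1,\dots,\ub_N$, which yields $\Pr(\max_i\norm{\ub_i}^2 \le 2\Cd) \geq 1 - N\delta/2$, and combine with the reduction in step one to conclude $\Pr(\cE_{t,1})\ge 1-N\delta/2$ as claimed. None of the steps are genuinely hard; the only mild obstacle is arranging the constants in the chi-squared tail so that the upper bound on $\norm{\ub_i}^2$ lands at exactly $2\Cd = 2d+4\log(1/\delta)$ with failure probability $\delta/2$, which is where the stated range $\delta<2/N$ is used.
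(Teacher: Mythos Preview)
Your approach is essentially the paper's: reduce $|D_\xi|$ to $\tfrac{L\alpha^2}{2}\norm{\ub}^2$ via smoothness, control $\norm{\ub}^2$ by a chi-squared tail bound (the paper cites this as Lemma~\ref{lem:u_up}, giving $\norm{\ub}^2\le 2d+3\log(1/\delta)$ with probability $1-\delta$), and finish with a union bound. The one substantive difference is in how the union bound is applied: the paper simply asserts that $\cE_{t,1}$ involves ``$N/2$ independent events'' and union-bounds at level $\delta$ each, whereas you more carefully observe that the sorted indices $(k)$ are data-dependent and therefore union-bound over all $N$ original vectors at level $\delta/2$ each. Your route is the more rigorous one on this point; the price is the extra constant bookkeeping to land at $2\Cd$ with failure probability $\delta/2$, and note that your absorption $3\log(2/\delta)\le 4\log(1/\delta)$ actually requires $\delta\le 1/8$, i.e.\ $N\ge 16$, not merely $\delta<2/N$.
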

\begin{proof}
	Using $\y = \x+\alpha \ub$ in Eq.~\eqref{eq:dyx}, we can obtain that
	\begin{equation}\label{eq:taylor}
		D_\xi(\x + \alpha \ub, \x) = f(\x + \alpha \ub;\;\xi) - f(\x;\;\xi) - \alpha \dotprod{\nabla f(\x;\;\xi), \ub}, 
	\end{equation}
	By the $L$-smoothness of $f(\x;\;\xi)$, we can obtain that
	\begin{equation*}
		|D_\xi(\x + \alpha \ub, \x)|
		\leq 
		\frac{L \alpha^2}{2} \norm{\ub}^2 \stackrel{\eqref{eq:u_norm}}{\leq} \frac{(2d + 3\log\frac{1}{\delta}) L\alpha^2}{2} \leq (d+2\log\frac{1}{\delta}) L \alpha^2 = \Cd L \alpha^2.
	\end{equation*}
	
	Accordingly, we can obtain that $|D_\xi(\x_t+\alpha \ub, \x_t)| \leq \Cd L\alpha^2$ holds for a random vector $\ub$ with a probability at least $1-\delta$. 
	In event $\cE_{t,1}$, there are $N/2$ independent above events. 
	By the probability union bound, we can obtain the result.    
\end{proof}

\begin{lemma}\label{lem:d_norm}
	Given $0<\delta <1$,  then event $\cE_{t,2}$ defined in Eq.~\eqref{eq:E2} holds with a probability at least $1-\delta$.
\end{lemma}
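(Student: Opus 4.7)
The plan is to write $\db_t$ as a linear image $V\bm{w}$ of a weight vector whose $\ell^2$ norm is deterministic, and then apply a $\chi^2$ tail bound to the resulting Gaussian quadratic form.

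First I would set $V := [\ub_1,\ldots,\ub_N]\in\RR^{d\times N}$ and let $\bm{w}\in\RR^N$ denote the rank-based weight vector, which has $N/4$ entries equal to $+4/N$ on the top-ranked indices, $N/4$ entries equal to $-4/N$ on the bottom-ranked indices, and the remaining $N/2$ entries zero, so that $\db_t = V\bm{w}$. A direct count gives $\norm{\bm{w}}^2 = 2(N/4)(4/N)^2 = 8/N$, a deterministic value independent of the (random) ranking. This is the structural fact driving the $1/N$ scaling in the claimed bound.

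Next, for any \emph{fixed} unit vector $\bm{v}\in\RR^N$, since the columns of $V$ are iid $\cN(0,I_d)$, the product $V\bm{v}$ is distributed as $\cN(0,I_d)$ and hence $\norm{V\bm{v}}^2\sim\chi^2_d$. The Laurent--Massart tail $\Pr(\chi^2_d\geq d+2\sqrt{dt}+2t)\leq e^{-t}$ with $t=\log(1/\delta)$ gives $\norm{V\bm{v}}^2 \leq d + 2\sqrt{d\log(1/\delta)}+2\log(1/\delta)$, which, after absorbing the cross term via $2\sqrt{ab}\leq a+b$, is bounded by a constant multiple of $\Cd$ with probability at least $1-\delta$. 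Setting $\hat{\bm{w}} := \bm{w}/\norm{\bm{w}}$ would therefore give $\norm{\db_t}^2 = \norm{\bm{w}}^2 \cdot \norm{V\hat{\bm{w}}}^2 \leq (8/N)\cdot\Cd$, matching the event $\cE_{t,2}$.

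The main obstacle is that $\hat{\bm{w}}$ is \emph{not} a fixed unit vector---its support and sign pattern depend on $V$ through the ranking of the perturbed function values $\{f(\x_t+\alpha\ub_i;\xi_t)\}_{i=1}^N$. I would resolve this using the orthogonal decomposition of Eq.~\eqref{eq:decom}: the ranking is determined, up to a Taylor remainder of size $\Cd L\alpha^2$ controlled by Lemma~\ref{lem:E1}, by the scalar projections $c_i := \dotprod{\z_t,\ub_i}$, while the perpendicular components $[\bZ_t^\perp]^\top\ub_i\sim\cN(0,I_{d-1})$ are iid and independent of the $c_i$'s. Conditioning on $\{c_i\}_{i=1}^N$ therefore fixes the support and signs of $\bm{w}$, and the perpendicular part of $V\bm{w}$ becomes a centered Gaussian of covariance $\norm{\bm{w}}^2 I_{d-1}$, controlled by a $\chi^2_{d-1}$ tail; the parallel part $\sum_k w_k c_{(k)}$ is a bounded weighted sum of order statistics of iid $\cN(0,1)$ variables, uniformly of order $O(1)$, which is absorbed into the $\Cd/N$ scaling. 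Preserving the crucial $1/N$ factor while handling the dependence between $\bm{w}$ and $V$ is the central technical challenge.
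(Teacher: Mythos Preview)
Your first two steps---write $\db_t = V\bm{w}$ with $\norm{\bm{w}}^2 = 8/N$ deterministic regardless of the ranking, then bound $\norm{V\hat{\bm{w}}}^2$ by a $\chi^2_d$ tail---are exactly the paper's argument: it writes $\norm{\db_t}^2 = (16/N^2)\bigl\|\sum_{k\in\cK}\pm\ub_{(k)}\bigr\|^2$, asserts that this signed combination of $N/2$ standard Gaussians is $\cN(\bm{0},(N/2)\bm{I}_d)$, and invokes the norm bound of Lemma~\ref{lem:u_up}. The paper does \emph{not} address the dependence between the sign/support pattern and the $\ub_i$'s; it treats the coefficients as if they were fixed in advance. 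So the ``main obstacle'' you identify is something the paper's own proof simply does not engage with.

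Your proposed repair---condition on $c_i = \dotprod{\z_t,\ub_i}$ so the ranking is fixed and the perpendicular part of $V\bm{w}$ is conditionally Gaussian---does not fully close the gap either: the ranking is determined by $f(\x_t+\alpha\ub_i;\xi_t)$, and through the second-order remainder $D_\xi(\x_t+\alpha\ub_i,\x_t)$ this depends on the \emph{entire} vector $\ub_i$, not only on $c_i$. Conditioning on $\{c_i\}$ therefore does not make $\bm{w}$ measurable, and the perpendicular part of $V\bm{w}$ is not conditionally Gaussian. A rigorous version along your lines would need either a union bound over the at most $2^{N/2}\binom{N}{N/2}$ sign/support patterns of $\bm{w}$ (inflating $\Cd$ by an additive $O(N)$, which is harmless in the regime $d\gg N$ the paper works in), or a perturbation comparison of the true ranking to the affine ranking in the spirit of Lemmas~\ref{lem:E3}--\ref{lem:E4}, together with a separate deterministic bound on the one-dimensional parallel contribution.
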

\begin{proof}
	By the definition of $\db_t$ in Eq.~\eqref{eq:dt} and the fact that $|w_{(k)}| = \frac{4}{N}$, we can obtain  
	\begin{align*}
		\norm{\db_t}^2 
		= \frac{16}{N^2}\norm{ \sum_{k\in \mathcal{K}}  \ub_{(k)} }^2.
	\end{align*}
	Furthermore, $\sum_{k\in \mathcal{K}}  \ub_{(k)}$ is Gaussian distribution with parameter $(\bm{0}, \frac{N}{2} \cdot \bm{I}_d)$. 
	Thus, by Eq.~\eqref{eq:u_norm}, we can obtain that
	\begin{equation*}
		\norm{ \sum_{k\in \mathcal{K}}  \ub_{(k)} }^2
		\leq 
		\frac{N \Cd}{2}.
	\end{equation*}
	Combining above results, we can obtain the final result.
\end{proof}

\begin{lemma}\label{lem:E3}
Letting index $(k)$ satisfy Eq.~\eqref{eq:sort} and event $\cE_{t,1}$ hold, then event $\cE_{t,3}$ defined in Eq.~\eqref{eq:up1} holds.
\end{lemma}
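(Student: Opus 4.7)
My plan is to relate the two orderings---by $f$-value and by $\dotprod{\z_t,\ub_j}$---through the Taylor identity, exploiting that the two would coincide if the remainders $D_\xi(\x_t+\alpha\ub_j,\x_t)$ all vanished. The first inequality is immediate: by \eqref{eq:idx}, the quantity $\sum_{k=1}^{N/4}\dotprod{\z_t,\ub_{i_k}}$ is the minimum sum of any $N/4$ elements of $\{\dotprod{\z_t,\ub_j}\}_{j=1}^{N}$, so it cannot exceed the particular sub-sum $\sum_{k=1}^{N/4}\dotprod{\z_t,\ub_{(k)}}$, and all the substantive content lies in the upper bound.

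For the upper bound, I would first use $\z_t=\nabla f(\x_t;\xi_t)/\norm{\nabla f(\x_t;\xi_t)}$ in \eqref{eq:dyx} to rewrite the Taylor identity as
\begin{equation*}
f(\x_t+\alpha\ub_j;\xi_t)-f(\x_t;\xi_t)=\alpha\norm{\nabla f(\x_t;\xi_t)}\dotprod{\z_t,\ub_j}+D_\xi(\x_t+\alpha\ub_j,\x_t).
\end{equation*}
Next, \eqref{eq:sort} says that $\{(1),\dots,(N/4)\}$ minimizes $\sum_{j\in S}f(\x_t+\alpha\ub_j;\xi_t)$ over all size-$N/4$ index sets $S$; picking $S=\{i_1,\dots,i_{N/4}\}$ gives $\sum_{k=1}^{N/4}f(\x_t+\alpha\ub_{(k)};\xi_t)\le\sum_{k=1}^{N/4}f(\x_t+\alpha\ub_{i_k};\xi_t)$. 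Substituting the Taylor identity on both sides, cancelling the common $(N/4)f(\x_t;\xi_t)$ terms, and rearranging then yields
\begin{equation*}
\alpha\norm{\nabla f(\x_t;\xi_t)}\Bigl[\sum_{k=1}^{N/4}\dotprod{\z_t,\ub_{(k)}}-\sum_{k=1}^{N/4}\dotprod{\z_t,\ub_{i_k}}\Bigr]\le \sum_{k=1}^{N/4}|D_\xi(\x_t+\alpha\ub_{i_k},\x_t)|+\sum_{k=1}^{N/4}|D_\xi(\x_t+\alpha\ub_{(k)},\x_t)|.
\end{equation*}

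Under $\cE_{t,1}$ each term in the second sum is at most $\Cd L\alpha^2$, contributing $(N/4)\Cd L\alpha^2$. The main obstacle is the first sum, which ranges over the $i_k$ indices (those with smallest $\dotprod{\z_t,\ub_j}$) rather than the $(k)$ indices that $\cE_{t,1}$ explicitly controls. My plan is to resolve this by revisiting the proof of Lemma~\ref{lem:E1}: the inequality $|D_\xi(\x_t+\alpha\ub_j,\x_t)|\le\Cd L\alpha^2$ is really a consequence of the pointwise Gaussian tail bound $\norm{\ub_j}^2\le 2\Cd$, a statement that does not reference the ranking position of $\ub_j$, so the same norm-based control applies uniformly across all $N$ samples (equivalently, $\cE_{t,1}$ is to be read as enforcing the bound at every $\ub_j$, at the cost of a union bound already absorbed into its probability estimate). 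Granted this, the first sum is also at most $(N/4)\Cd L\alpha^2$, the right-hand side totals $(N/2)\Cd L\alpha^2$, and dividing through by $\alpha\norm{\nabla f(\x_t;\xi_t)}$ yields exactly $\frac{N\Cd L\alpha}{2\norm{\nabla f(\x_t;\xi_t)}}$, as required.
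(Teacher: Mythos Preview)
Your argument is correct and arrives at exactly the right constant, but it follows a different route from the paper. The paper works element-by-element: it sets $\mathcal{J}=\{(1),\dots,(N/4)\}$ and $\mathcal{I}=\{i_1,\dots,i_{N/4}\}$, and if these differ in $\ell$ places it pairs each index in $\mathcal{J}\setminus\mathcal{I}$ with one in $\mathcal{I}\setminus\mathcal{J}$, showing via the ranking \eqref{eq:sort} and the remainder bound that each pair contributes at most $2\Cd L\alpha/\norm{\nabla f(\x_t;\xi_t)}$ to the discrepancy; summing and relaxing $\ell\le N/4$ gives the result. Your global minimality comparison---comparing the two full sub-sums of $f$-values directly and isolating the $\dotprod{\z_t,\ub}$ difference---is cleaner and shorter, and avoids the bookkeeping of the symmetric difference. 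The paper's pairwise argument does yield the sharper intermediate bound $2\ell\Cd L\alpha/\norm{\nabla f(\x_t;\xi_t)}$ before relaxation, but this refinement is not used downstream.

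On the point you flag about $\cE_{t,1}$: you are right that, as literally written, $\cE_{t,1}$ controls $|D_\xi|$ only at the ranking positions in $\cK$, yet both your proof and the paper's need the bound at indices in $\mathcal{I}\setminus\mathcal{J}$, which need not lie in $\cK$. Your resolution---reading $\cE_{t,1}$ as the uniform norm bound $\norm{\ub_j}^2\le 2\Cd$ for all $j$---is the correct fix; strictly speaking the union-bound cost is then $N\delta$ rather than $N\delta/2$, a discrepancy that is immaterial for the final complexity but worth noting since you claimed it was ``already absorbed.''
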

\begin{proof}
Since $0<\alpha$ and $\norm{\nabla f(\x_t;\;\xi_t)}$, then  Eq.~\eqref{eq:sort} is equivalent to the following equation
\begin{equation*}
	\frac{f(\x_t+\alpha \ub_{(1)};\;\xi_t) - f(\x_t;\;\xi_t)}{\alpha \norm{\nabla f(\x_t;\;\xi_t)}} \leq 	\frac{f(\x_t+\alpha \ub_{(2)};\;\xi_t) - f(\x_t;\;\xi_t)}{\alpha \norm{\nabla f(\x_t;\;\xi_t)}}\leq\dots\leq \frac{f(\x_t+\alpha \ub_{(N)};\;\xi_t) - f(\x_t;\;\xi_t)}{\alpha \norm{\nabla f(\x_t;\;\xi_t)}}.
\end{equation*}
Combining with Eq.~\eqref{eq:taylor}, we can obtain 
\begin{align*}
	\dotprod{\ub_{(1)}, \z_t} + \frac{D_\xi(\x_t+\alpha \ub_{(1)}, \x_t)}{\alpha \norm{\nabla f(\x_t;\;\xi_t)}}
	\leq 
	\dotprod{\ub_{(2)}, \z_t} + \frac{D_\xi(\x_t+\alpha \ub_{(2)}, \x_t)}{\alpha \norm{\nabla f(\x_t;\;\xi_t)}}
	\leq
	\dots
	\leq 
	\dotprod{\ub_{(N)}, \z_t} + \frac{D_\xi(\x_t+\alpha \ub_{(N)}, \x_t)}{\alpha \norm{\nabla f(\x_t;\;\xi_t)}}.
\end{align*}
For notation convenience, we use an index $\{j_k\}$ to replace $(k)$. 
Accordingly, the index $\{j_k\}$ satisfies
\begin{align}\label{eq:jdx}
	\dotprod{\z_t, \ub_{j_1}} + \frac{D_\xi(\x_t + \alpha \ub_{j_1}, \x_t)}{\alpha \norm{\nabla f(\x_t;\;\xi_t)}}
	\leq 
	\dotprod{\z, \ub_{j_2}} + \frac{D(\x + \alpha \ub_{j_2}, \x)}{\alpha \norm{\nabla f(\x_t;\;\xi_t)}}
	\leq
	\dots
	\leq
	\dotprod{\z, \ub_{j_N}} + \frac{D(\x + \alpha \ub_{j_N}, \x)}{\alpha \norm{\nabla f(\x_t;\;\xi_t)}}.
\end{align} 

Let us denote that
\begin{equation*}
	\mathcal{J} := \{j_1, j_2, \dots, j_{N/4}\}, \quad\mbox{ and } \quad \mathcal{I}:= \{i_1, i_2,\dots,i_{N/4}\},
\end{equation*}
where index $\{i_{k}\}$ satisfies Eq.~\eqref{eq:idx}.

If two sets $\mathcal{J}$ and $\mathcal{I}$ are the same,  then it holds that
\begin{equation*}
	\sum_{k=1}^{N/4} \dotprod{\z, \ub_{j_k}} = \sum_{k=1}^{N/4} \dotprod{\z, \ub_{i_k}}.
\end{equation*}
Then event $\cE_{t,3}$ holds trivially.

Otherwise, the set that $\mathcal{J}$ minuses $\mathcal{I}$ which is denoted as $\mathcal{J} \setminus \mathcal{I}$ is non-empty. 
Assume that  $\mathcal{J} \setminus \mathcal{I}$ has $\ell$ elements. 
Accordingly, the set $\mathcal{I} \setminus \mathcal{J}$ also has $\ell$ elements.
We can present $\mathcal{J} \setminus \mathcal{I}$ as follows:
\begin{equation*}
	\mathcal{J} \setminus \mathcal{I} = \{ i_{N/4 + k_1}, i_{N/4 + k_2}, \dots, i_{N/4 + k_\ell}  \}, \mbox{ with } 1\leq k_1 \leq k_2\leq\dots\leq k_\ell.
\end{equation*}
This is because if $k_i\leq 0$ with $1\leq i\leq \ell$, then $i_{N/4 + k_i}$  will be in $\mathcal{I}$. 

Similarly, the set $\mathcal{I} \setminus \mathcal{J}$ can be presented as:
\begin{align*}
	\mathcal{I} \setminus \mathcal{J} = \{i_{k_1'}, i_{k_2'},\dots, i_{k_\ell'}\} \mbox{ with } 1\leq k_1' \leq k_2'\leq \dots\leq k_\ell' \leq N/4.
\end{align*}

Then for any $k_i$ and $k_i'$ with $i=1,\dots,\ell$, we have
\begin{align*}
	\dotprod{\z_t, \ub_{i_{N/4+k_i}}} + \frac{D_\xi(\x_t + \alpha \ub_{i_{N/4+k_i}}, \x_t)}{\alpha \norm{\nabla f(\x_t;\;\xi_t)}}
	\leq 
	\dotprod{\z_t, \ub_{i_{k_i'}}} + \frac{D_\xi(\x_t + \alpha \ub_{i_{k_i'}}, \x_t)}{\alpha \norm{\nabla f(\x_t;\;\xi_t)}}.
\end{align*}
This is because of $i_{N/4 + k_1} \in \mathcal{J}$, $i_{k_i'} \notin \mathcal{J}$, and Eq.~\eqref{eq:jdx}. 

Thus, we have
\begin{align*}
	\dotprod{\z_t, \ub_{i_{N/4+k_i}}} 
	\leq&
	\dotprod{\z_t, \ub_{i_{k_i'}}}
	+
	\frac{D_\xi(\x_t + \alpha \ub_{i_{k_i'}}, \x_t)}{\alpha \norm{\nabla f(\x_t;\;\xi_t)}} 
	-
	\frac{D_\xi(\x_t + \alpha \ub_{i_{N/4+k_i}}, \x_t)}{\alpha \norm{\nabla f(\x_t;\;\xi_t)}}\\
	\stackrel{\eqref{eq:E1}}{\leq}&
	\dotprod{\z_t, \ub_{i_{k_i'}}}
	+ 
	\frac{2\Cd L \alpha}{\norm{\nabla f(\x_t;\;\xi_t)}},
\end{align*}
where the last inequality is because event $\cE_{t,1}$ holds.

Furthermore, it also holds that
\begin{equation*}
	\dotprod{\z_t, \ub_{i_{k_i'}}} \leq \dotprod{\z_t, \ub_{i_{N/4+k_i}}}.
\end{equation*}
This is because of Eq.~\eqref{eq:idx} and $k_i' \leq N/4+k_i$. 
Thus, it holds that
\begin{equation}\label{eq:jb}
	\dotprod{\z_t, \ub_{i_{k_i'}}} 
	\leq 
	\dotprod{\z_t, \ub_{i_{N/4+k_i}}}
	\leq 
	\dotprod{\z_t, \ub_{i_{k_i'}}}
	+ 
	\frac{2\Cd L \alpha}{\norm{\nabla f(\x_t;\;\xi_t)}}.
\end{equation}
Accordingly, we can obtain that
\begin{align*}
	\sum_{i\in \mathcal{I}\setminus\mathcal{J}} \dotprod{\z_t, \ub_i} 
	\leq 	
	\sum_{j\in \mathcal{J}\setminus\mathcal{I}} \dotprod{\z_t, \ub_j}
	\leq
	\sum_{i\in \mathcal{I}\setminus\mathcal{J}} \dotprod{\z_t, \ub_i} 
	+ 
	\frac{2\ell \Cd L \alpha}{\norm{\nabla f(\x_t;\;\xi_t)}}.
\end{align*}

Furthermore, it holds that
\begin{align*}
	\mathcal{J} =& (\mathcal{J}\setminus\mathcal{I}) \;\cup\; (\mathcal{J}\cap \mathcal{I}  ), \mbox{ and } (\mathcal{J}\setminus\mathcal{I}) \;\cap\; (\mathcal{J}\cap \mathcal{I}  ) = \emptyset,\\
	\mathcal{I} =& (\mathcal{I}\setminus\mathcal{J}) \;\cup\; (\mathcal{J}\cap \mathcal{I}  ), \mbox{ and } (\mathcal{I}\setminus\mathcal{J}) \;\cap\; (\mathcal{J}\cap \mathcal{I}  ) = \emptyset.
\end{align*}
Thus, we have
\begin{align*}
	\sum_{j\in\mathcal{J}}  \dotprod{\z_t, \ub_j} 
	=& 	
	\sum_{j\in \mathcal{J}\setminus\mathcal{I}} \dotprod{\z_t, \ub_j}
	+
	\sum_{j\in \mathcal{J}\cap\mathcal{I}} \dotprod{\z_t, \ub_j}\\
	=&
	\sum_{j\in \mathcal{J}\setminus\mathcal{I}} \dotprod{\z_t, \ub_j}
	+
	\sum_{i\in \mathcal{J}\cap\mathcal{I}} \dotprod{\z_t, \ub_i}\\
	\stackrel{\eqref{eq:jb}}{\leq}&
	\sum_{i\in \mathcal{I}\setminus\mathcal{J}} \dotprod{\z_t, \ub_i} 
	+ 
	\frac{2\ell \Cd L \alpha}{\norm{\nabla f(\x_t;\;\xi_t)}}
	+
	\sum_{i\in \mathcal{J}\cap\mathcal{I}} \dotprod{\z_t, \ub_i}\\
	=&
	\sum_{i\in \mathcal{I}} \dotprod{\z_t, \ub_i} 
	+ 
	\frac{2\ell \Cd L \alpha}{\norm{\nabla f(\x_t;\;\xi_t)}}\\
	\leq&
	\sum_{i\in \mathcal{I}} \dotprod{\z_t, \ub_i} 
	+ 
	\frac{N \Cd L \alpha}{2\norm{\nabla f(\x_t;\;\xi_t)}}.
\end{align*}
Similarly, we can obtain that
\begin{align*}
	\sum_{i\in \mathcal{I}} \dotprod{\z_t, \ub_i} 
	\leq 
	\sum_{j\in\mathcal{J}}  \dotprod{\z_t, \ub_j}.
\end{align*}

Thus, we have
\begin{equation*}
	\sum_{i\in \mathcal{I}} \dotprod{\z_t, \ub_i} 
	\leq 
	\sum_{j\in\mathcal{J}}  \dotprod{\z_t, \ub_j}
	\leq
	\sum_{i\in \mathcal{I}} \dotprod{\z_t, \ub_i}
	+ 
	\frac{N \Cd L \alpha}{2\norm{\nabla f(\x_t;\;\xi_t)}}.
\end{equation*}
\end{proof}

\begin{lemma}\label{lem:E4}
	Letting index $(k)$ satisfy Eq.~\eqref{eq:sort} and event $\cE_{t,1}$ hold, then event $\cE_{t,4}$ defined in Eq.~\eqref{eq:up2} holds.
\end{lemma}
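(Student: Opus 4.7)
The proof plan is to mirror the symmetric-difference argument used for Lemma~\ref{lem:E3}, adapted to the largest $N/4$ indices rather than the smallest. The combinatorial skeleton is identical; only the orientations of the relevant inequalities flip. Accordingly, I would introduce
\begin{equation*}
\mathcal{J} := \{j_{3N/4+1}, \ldots, j_N\}, \qquad \mathcal{I} := \{i_{3N/4+1}, \ldots, i_N\},
\end{equation*}
where, as in the proof of Lemma~\ref{lem:E3}, $j_k$ denotes the index $(k)$ produced by the rank oracle (so that Eq.~\eqref{eq:jdx} holds), and $i_k$ satisfies Eq.~\eqref{eq:idx}. The goal then reduces to
\begin{equation*}
\sum_{i\in\mathcal{I}} \dotprod{\z_t, \ub_i} \;\geq\; \sum_{j\in\mathcal{J}} \dotprod{\z_t, \ub_j} \;\geq\; \sum_{i\in\mathcal{I}} \dotprod{\z_t, \ub_i} - \frac{N\Cd L\alpha}{2\norm{\nabla f(\x_t;\;\xi_t)}}.
\end{equation*}

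If $\mathcal{J} = \mathcal{I}$ the claim is immediate. Otherwise, set $\ell = |\mathcal{J}\setminus\mathcal{I}| = |\mathcal{I}\setminus\mathcal{J}|$ and write $\mathcal{J}\setminus\mathcal{I} = \{i_{m_1}, \ldots, i_{m_\ell}\}$ with each $m_r \le 3N/4$ (since an element of $\mathcal{J}\setminus\mathcal{I}$ cannot be among the top $N/4$ inner products), and $\mathcal{I}\setminus\mathcal{J} = \{i_{m_1'}, \ldots, i_{m_\ell'}\}$ with each $m_r' > 3N/4$. I would then pair $i_{m_r}$ with $i_{m_r'}$. Equation \eqref{eq:idx} immediately yields the easy direction $\dotprod{\z_t, \ub_{i_{m_r}}} \le \dotprod{\z_t, \ub_{i_{m_r'}}}$. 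For the reverse direction, I would exploit Eq.~\eqref{eq:jdx}: because $i_{m_r}\in\mathcal{J}$ ranks above $i_{m_r'}\notin\mathcal{J}$ in function value, the perturbed inner products satisfy the reversed inequality, and bounding each $|D_\xi(\x_t+\alpha \ub_\cdot,\x_t)|$ by $\Cd L\alpha^2$ via event $\cE_{t,1}$ gives the counterpart
\begin{equation*}
\dotprod{\z_t, \ub_{i_{m_r}}} \;\ge\; \dotprod{\z_t, \ub_{i_{m_r'}}} - \frac{2\Cd L\alpha}{\norm{\nabla f(\x_t;\;\xi_t)}}.
\end{equation*}

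Summing these per-pair bounds over $r = 1, \ldots, \ell$ and adding the common contribution $\sum_{i\in\mathcal{I}\cap\mathcal{J}} \dotprod{\z_t,\ub_i}$ on both sides delivers the two-sided inequality between the full sums, with residual $\tfrac{2\ell\Cd L\alpha}{\norm{\nabla f(\x_t;\;\xi_t)}}$. Using $\ell\le N/4$ this is at most $\tfrac{N\Cd L\alpha}{2\norm{\nabla f(\x_t;\;\xi_t)}}$, matching exactly the definition of $\cE_{t,4}$. I do not anticipate any serious obstacle: the structure duplicates Lemma~\ref{lem:E3} verbatim, and the only care needed is to get the orientation of the ranking right, since passing from the bottom $N/4$ to the top $N/4$ flips both the direction of the inequality in Eq.~\eqref{eq:jdx} and the range of the subscripts $m_r, m_r'$ that define the symmetric difference.
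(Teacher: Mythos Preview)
Your proposal is correct and follows exactly the approach the paper intends: the paper's own proof of Lemma~\ref{lem:E4} consists solely of the remark ``The proof is almost the same as the one of Lemma~\ref{lem:E3},'' and your sketch faithfully executes that mirrored symmetric-difference argument with the appropriate sign reversals for the top-$N/4$ indices.
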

\begin{proof}
The proof is almost the same as the one of Lemma~\ref{lem:E3}.
\end{proof}

\begin{lemma}\label{lem:E5}
	Event $\cE_{t,5}$ defined in Eq.~\eqref{eq:E5} holds with a probability at least $1 - 2\exp(-N \cdot K(1/4 \Vert p))$ with $p = 0.0224$. That is,
	\begin{equation*}
		\Pr\left(\cE_{t,5}\right) \geq 1 - 2\exp(-N \cdot K(1/4 \Vert p)), \quad\mbox{ with }\quad p = 0.0224,
	\end{equation*}
	where the binary Kullback--Leibler divergence is defined as
	\begin{equation*}
		K(q \Vert p) \;=\; q\ln\frac{q}{p}+(1-q)\ln\frac{1-q}{1-p}.
	\end{equation*}
\end{lemma}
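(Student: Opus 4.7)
The plan is to condition on $\z_t$ and reduce the event $\cE_{t,5}$ to a pair of one-sided binomial tail bounds, and then apply a Chernoff--Hoeffding bound followed by a union bound. Conditional on $\x_t$ and $\xi_t$ (which jointly determine $\z_t$ as a deterministic unit vector), the Gaussian samples $\ub_1,\dots,\ub_N$ drawn at step 2 of Algorithm~\ref{alg:SA1} are independent of $\xi_t$, so the projections $X_i := \dotprod{\z_t, \ub_i}$ are, conditionally, i.i.d.\ $\mathcal{N}(0,1)$ random variables. Under this conditioning, the sorting rule in \eqref{eq:idx} identifies $\dotprod{\z_t, \ub_{i_k^t}}$ with the $k$-th order statistic $X_{(k)}$ of $X_1,\dots,X_N$.

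Next, I would express each half of the complementary bad event $\cE_{t,5}^c$ as a binomial upper-tail event. The elementary order-statistic identity is that, for any threshold $c$, the event on the order statistic $X_{(k)}$ relative to $c$ is equivalent to a corresponding count of samples lying on one side of $c$. Applied to $k=3N/4+1$ with $c=2$, the condition on the $(3N/4+1)$-th order statistic is equivalent to $Y_+ := \#\{i : X_i \geq 2\}$ reaching $N/4$; applied to $k=N/4$ with $c=-2$, it is equivalent to $Y_- := \#\{i : X_i \leq -2\}$ reaching $N/4$. Both $Y_+$ and $Y_-$ are $\mathrm{Bin}(N,p)$ with $p := \Pr(\mathcal{N}(0,1) \geq 2) = 1 - \Phi(2)$, the equality of marginals following from the symmetry of the standard normal; the numerical constant $p = 0.0224$ is a clean proxy for $1-\Phi(2) \approx 0.0228$ obtained from a standard Gaussian tail estimate at level $2$.

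Finally, since $p < 1/4$, the Chernoff--Hoeffding inequality for the Binomial upper tail yields
\[
\Pr(Y_+ \geq N/4) \leq \exp\bigl(-N \cdot K(1/4 \Vert p)\bigr),
\]
and by symmetry the same bound applies to $\Pr(Y_- \geq N/4)$. A two-term union bound then gives $\Pr(\cE_{t,5}^c) \leq 2\exp(-N \cdot K(1/4 \Vert p))$, which is exactly the claimed bound after taking complements. The only step that requires care is the conditioning argument that renders the $X_i$ i.i.d.\ $\mathcal{N}(0,1)$: this relies on the independence of the algorithm's Gaussian draws from $\xi_t$, which is baked into Algorithm~\ref{alg:SA1}. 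Beyond that, the proof is a routine use of Chernoff--Hoeffding plus a union bound, and the only bookkeeping concern is consistency of strict versus non-strict inequalities in the order-statistic identity; the numerical choice of $p$ only affects the explicit constant in the exponent.
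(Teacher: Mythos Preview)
Your high-level plan matches the paper's: observe that the projections $\dotprod{\z_t,\ub_i}$ are i.i.d.\ $\mathcal{N}(0,1)$, translate the order-statistic conditions into binomial tail events, apply Chernoff, and union-bound. The paper simply outsources the Chernoff step to the appendix Lemmas~\ref{lem:low1} and~\ref{lem:low2}. However, your execution has a direction error. You correctly identify that the first half of $\cE_{t,5}$, namely $X_{(3N/4+1)}\ge 2$, is equivalent to $Y_+:=\#\{i:X_i\ge 2\}\ge N/4$; but this is the \emph{good} event, not its complement. Since $Y_+\sim\mathrm{Bin}(N,p)$ with $p=1-\Phi(2)\approx 0.0228<1/4$, your Chernoff inequality $\Pr(Y_+\ge N/4)\le\exp\!\big(-NK(1/4\Vert p)\big)$ is an upper bound on the good half-event, not on the bad one, so it cannot feed a union bound for $\Pr(\cE_{t,5}^c)$. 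What you have actually shown is $\Pr(\cE_{t,5})\le\exp\!\big(-NK(1/4\Vert p)\big)$, the opposite of the claim.

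This is not a slip that can be fixed by relabelling: the event $\cE_{t,5}$ asks the empirical $25$th and $75$th percentiles of $N$ standard Gaussians to lie beyond $\pm 2$, whereas those percentiles concentrate near $\pm\Phi^{-1}(3/4)\approx 0.674$. Your Chernoff computation is correct and in fact demonstrates that $\cE_{t,5}$ has exponentially \emph{small} probability. The paper's own proof inherits the same issue through Lemmas~\ref{lem:low1}--\ref{lem:low2}, whose stated inequality direction is inverted for $\tau=\pm 2$. So the gap is real but originates in the statement of $\cE_{t,5}$ (equivalently, in the choice of thresholds $\pm 2$), not in your proof strategy, which is sound and coincides with the paper's.
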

\begin{proof}
	First, it holds that $\dotprod{\z_t, \ub_i} \sim \cN(0,1)$. 
	Thus, $\dotprod{\z_t, \ub_1},\dots, \dotprod{\z_t, \ub_N}$ i.i.d. follow the standard Gaussian distribution.  
	We denote them $X_1,\dots, X_N$. 
	Accordingly $X_{i_k}$ is the order statistics of $X_1,\dots, X_N$.
	Then, by Lemma~\ref{lem:low1} and Lemma~\ref{lem:low2}, we can obtain the result in Eq.~\eqref{eq:E5}.

\end{proof}

\section{Descent Direction}

Next, we will prove that $\db_t$ is a real descent direction. 
We first represent the inner product of $\db_t$ and $\nabla f(\x_t)$ in a correct form.

\begin{lemma}\label{lem:dd}
Let $\db_t^+$ and $\db_t^-$ be defined in Eq.~\eqref{eq:dt} with all $w_{(k)}^+ = \frac{4}{N}$ and $w_{(k)}^- = -\frac{4}{N}$. 
Notations $\z_t$ and $\bZ_t^\perp$ are defined in Eq.~\eqref{eq:z}.
Then, we have the following properties:
\begin{equation}
\dotprod{\db_t^+, \nabla f(\x_t)} 
=
\frac{4\dotprod{\nabla f(\x_t), \z_t}}{N} \sum_{k=1}^{N/4}  \z_t^\top \ub_{(k)} 
+  
\frac{4 \nabla^\top f(\x_t) \bZ_t^\perp}{N} \sum_{k=1}^{N/4}  [\bZ_t^\perp]^\top \ub_{(k)},
\end{equation}
and
\begin{equation}
	\dotprod{\db_t^-, \nabla f(\x_t)} 
	=
	-\frac{4\dotprod{\nabla f(\x_t), \z_t}}{N} \sum_{k=3N/4+1}^{N}  \z_t^\top \ub_{(k)} 
	-  
	\frac{4 \nabla^\top f(\x_t) \bZ_t^\perp}{N} \sum_{k=3N/4+1}^{N}  [\bZ_t^\perp]^\top \ub_{(k)}.
\end{equation}
Furthermore, it holds that $\z_t^\top \ub_{(k)} \sim \cN(0, 1)$ and all $[\bZ_t^\perp]^\top \ub_{(k)}$ i.i.d. follow the $(d-1)$-dimensional standard Gaussian, that is  $[\bZ_t^\perp]^\top \ub_{(k)} \sim \cN(\bm{0}, \bm{I}_{d-1})$. 
Furthermore,
$\z_t^\top \ub_{(k)}$ and $[\bZ_t^\perp]^\top \ub_{(k)}$ are independent to each other. 
\end{lemma}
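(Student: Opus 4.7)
The plan is to combine the orthogonal decomposition of the identity recalled in Eq.~\eqref{eq:decom} with the standard rotation-invariance of the Gaussian distribution. The statement naturally splits into two essentially independent pieces: an algebraic rewriting of $\dotprod{\db_t^\pm, \nabla f(\x_t)}$ using the decomposition $\bm{I}_d = \z_t\z_t^\top + \bZ_t^\perp [\bZ_t^\perp]^\top$, and a standard calculation of the marginal and joint distributions of two linear functionals of a Gaussian vector.

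First I would insert this identity between $\ub_{(k)}^\top$ and $\nabla f(\x_t)$ to obtain, for each $k$,
\begin{equation*}
\dotprod{\ub_{(k)}, \nabla f(\x_t)} = (\z_t^\top \ub_{(k)})\,\dotprod{\z_t, \nabla f(\x_t)} + \nabla^\top f(\x_t)\,\bZ_t^\perp [\bZ_t^\perp]^\top \ub_{(k)}.
\end{equation*}
This identity is pointwise in $k$, so multiplying by $w_{(k)}^+ = 4/N$ and summing over $k=1,\dots,N/4$ directly yields the claimed formula for $\dotprod{\db_t^+, \nabla f(\x_t)}$; the analogous computation with $w_{(k)}^- = -4/N$ summed over $k = 3N/4+1,\dots, N$ produces the formula for $\dotprod{\db_t^-, \nabla f(\x_t)}$. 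This step uses nothing about the rank oracle or the induced ordering.

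For the distributional part, $\z_t$ and $\bZ_t^\perp$ are measurable with respect to $(\x_t, \xi_t)$ and hence independent of the freshly drawn Gaussian vectors $\ub_1,\dots,\ub_N$ of iteration $t$. It therefore suffices to analyze a single $\ub_i \sim \cN(\bm 0, \bm{I}_d)$. Since $\z_t$ is a unit vector, $\z_t^\top \ub_i$ is a univariate Gaussian with mean $0$ and variance $\norm{\z_t}^2 = 1$; since $\bZ_t^\perp$ has orthonormal columns, $[\bZ_t^\perp]^\top \ub_i$ is a $(d-1)$-dimensional Gaussian with covariance $[\bZ_t^\perp]^\top \bZ_t^\perp = \bm{I}_{d-1}$; joint Gaussianity together with the vanishing cross-covariance $\EE[(\z_t^\top \ub_i)([\bZ_t^\perp]^\top \ub_i)^\top] = \z_t^\top \bZ_t^\perp = \bm 0^\top$ yields the stated independence. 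The i.i.d.\ property across distinct indices is inherited from the i.i.d.\ samples $\ub_1,\dots,\ub_N$.

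The point that I expect to require the most care in the write-up is the interplay between the distributional claims and the ordered index $(k)$. The relabeling $i\mapsto (k)$ induced by the rank oracle is a random permutation that depends on the $\ub_i$'s. The distributional statements should therefore be read at the level of the unordered sample: every member $\ub_i$ admits the orthogonal decomposition into $\z_t^\top \ub_i$ and $[\bZ_t^\perp]^\top \ub_i$ with the stated marginals and independence, and $\{\z_t^\top \ub_{(k)}\}_{k=1}^N$ coincides as a multiset with $\{\z_t^\top \ub_i\}_{i=1}^N$. Any statement about \emph{specific} order statistics of $\z_t^\top \ub_{(k)}$ that is needed later is captured by the events $\cE_{t,3}, \cE_{t,4}, \cE_{t,5}$ rather than by this lemma, so once the subtlety is flagged the proof proceeds exactly as sketched.
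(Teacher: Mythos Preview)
Your proposal is correct and follows essentially the same route as the paper: insert the decomposition $\bm{I}_d = \z_t\z_t^\top + \bZ_t^\perp[\bZ_t^\perp]^\top$ into $\dotprod{\ub_{(k)},\nabla f(\x_t)}$, sum with the weights $\pm 4/N$, and invoke rotation invariance of the standard Gaussian for the distributional claims. Your write-up is in fact more careful than the paper's, both in making the conditioning on $(\x_t,\xi_t)$ explicit and in flagging that the ordered index $(k)$ is a data-dependent permutation, a subtlety the paper glosses over.
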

\begin{proof}
	First, by the definition of $\db_t^+$ in Eq.~\eqref{eq:dt}, we have
\begin{align*}
\dotprod{\db_t^+, \nabla f(\x_t)}
= 
\frac{4}{N} \sum_{k=1}^{N/4} \dotprod{\ub_{(k)}, \nabla f(\xb_t)  }.
\end{align*}

Next, we will decompose $\ub_{(k)}$ as follows by Eq.~\eqref{eq:decom}:
\begin{equation}\label{eq:decomp}
\ub_{(k)} = \z_t\z_t^\top \ub_{(k)} + \bZ_t^\perp[\bZ_t^\perp]^\top \ub_{(k)}.
\end{equation}
By the properties of the Gaussian distribution, we can obtain that
$\z_t^\top \ub_{(k)} \sim \cN(0, 1)$ and $[\bZ_t^\perp]^\top \ub_{(k)} \sim \cN(0, \bm{I}_{d-1})$. 
Furthermore, $\z_t^\top \ub_{(k)}$ and $[\bZ_t^\perp]^\top \ub_{(k)}$ are independent to each other.

Using the decomposition in Eq.~\eqref{eq:decomp}, we can obtain that 
\begin{align*}
\dotprod{\db_t^+, \nabla f(\x_t)} 
=
\frac{4\dotprod{\nabla f(\x_t), \z_t}}{N} \sum_{k=1}^{N/4}  \z_t^\top \ub_{(k)} 
+  
\frac{4 \nabla^\top f(\x_t) \bZ_t^\perp}{N} \sum_{k=1}^{N/4}  [\bZ_t^\perp]^\top \ub_{(k)}.
\end{align*}

Similarly,
\begin{align*}
\dotprod{\db_t^-, \nabla f(\x_t)} 
=
-\frac{4\dotprod{\nabla f(\x_t), \z_t}}{N} \sum_{k=3N/4+1}^{N}  \z_t^\top \ub_{(k)} 
-  
\frac{4 \nabla^\top f(\x_t) \bZ_t^\perp}{N} \sum_{k=3N/4+1}^{N}  [\bZ_t^\perp]^\top \ub_{(k)}.
\end{align*}
\end{proof}

Based on Lemma~\ref{lem:dd}, we will present the inner product $\dotprod{\db_t, \nabla f(\x_t)}$ in the following form.
\begin{lemma}\label{lem:d_nab}
Letting $\db_t$ be defined in Eq.~\eqref{eq:dt}, then we have
\begin{equation}\label{eq:d_nab}
\begin{aligned}
\dotprod{\db_t, \nabla f(\x_t)}
=&
\frac{4 \norm{\nabla f(\x_t)}^2}{N \norm{\nabla f(\x_t;\;\xi_t)}}\left(\sum_{k=1}^{N/4}  \z_t^\top \ub_{(k)} - \sum_{k=3N/4+1}^{N} \z_t^\top \ub_{(k)} \right)
+ \frac{4 \nabla^\top f(\x_t) \bZ_t^\perp}{N} \sum_{k\in\cK}\bb_{(k)}^t\\
&+ 
\frac{4\dotprod{\nabla f(\x_t),  \nabla f(\x_t;\;\xi_t) - \nabla f(\x_t)}}{N\norm{\nabla f(\x_t;\;\xi_t)}}\left(\sum_{k=1}^{N/4}  \z_t^\top \ub_{(k)} - \sum_{k=3N/4+1}^{N} \z_t^\top \ub_{(k)} \right),
\end{aligned}
\end{equation}
where we define $\bb_{(k)}^t = [\bZ_t^\perp]^\top \ub_{(k)}$.
\end{lemma}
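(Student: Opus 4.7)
The plan is a short two-step algebraic calculation: sum the two identities from Lemma~\ref{lem:dd} and then expand the scalar coefficient $\dotprod{\nabla f(\x_t),\z_t}$ using the definition of $\z_t$ in Eq.~\eqref{eq:z}. Since $\db_t = \db_t^+ + \db_t^-$ and Lemma~\ref{lem:dd} already decomposes each of $\dotprod{\db_t^\pm,\nabla f(\x_t)}$ along the orthogonal directions $\z_t$ and $\bZ_t^\perp$, no new geometric decomposition is needed — only one further manipulation of one scalar prefactor.

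First, adding the two displays of Lemma~\ref{lem:dd} and grouping the $\z_t$-components and the $\bZ_t^\perp$-components separately yields
\[
\dotprod{\db_t, \nabla f(\x_t)} = \frac{4 \dotprod{\nabla f(\x_t),\z_t}}{N}\left(\sum_{k=1}^{N/4}\z_t^\top\ub_{(k)} - \sum_{k=3N/4+1}^{N}\z_t^\top\ub_{(k)}\right) + \frac{4\nabla^\top f(\x_t)\bZ_t^\perp}{N}\sum_{k\in\cK}\bb_{(k)}^t,
\]
where the sign attached to each $\bb_{(k)}^t$ in the last sum is inherited from the corresponding weight $w_{(k)}^\pm$. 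This already reproduces the middle summand on the right-hand side of Eq.~\eqref{eq:d_nab}.

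Second, I would rewrite the scalar coefficient $\dotprod{\nabla f(\x_t),\z_t}$ using $\z_t = \nabla f(\x_t;\xi_t)/\norm{\nabla f(\x_t;\xi_t)}$ together with the trivial identity $\nabla f(\x_t;\xi_t) = \nabla f(\x_t) + \bigl(\nabla f(\x_t;\xi_t) - \nabla f(\x_t)\bigr)$ inside the inner product. This gives
\[
\dotprod{\nabla f(\x_t),\z_t} = \frac{\norm{\nabla f(\x_t)}^2}{\norm{\nabla f(\x_t;\xi_t)}} + \frac{\dotprod{\nabla f(\x_t),\nabla f(\x_t;\xi_t) - \nabla f(\x_t)}}{\norm{\nabla f(\x_t;\xi_t)}}.
\]
Substituting this back into the previous display and distributing the two numerator pieces across the parenthesized sum splits the first summand into exactly the first line (the signal term carrying $\norm{\nabla f(\x_t)}^2$) and the third line (the noise term involving the stochastic-gradient deviation) of Eq.~\eqref{eq:d_nab}.

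I do not anticipate any substantive obstacle: the argument is pure bookkeeping once Lemma~\ref{lem:dd} is in hand. The one step that deserves a careful pen-and-paper check is the sign convention on the $\bb_{(k)}^t$ sum when the top-quantile indices $\{1,\dots,N/4\}$ and bottom-quantile indices $\{3N/4+1,\dots,N\}$ are merged into $\cK$; these signs must be consistent with the weights $w_{(k)}^\pm$ so that the final identity matches Eq.~\eqref{eq:d_nab} exactly.
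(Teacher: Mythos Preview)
Your proposal is correct and follows essentially the same approach as the paper's proof: add the two identities from Lemma~\ref{lem:dd} using $\db_t=\db_t^++\db_t^-$, and expand the scalar coefficient $\dotprod{\nabla f(\x_t),\z_t}$ via the definition of $\z_t$ and the splitting $\nabla f(\x_t;\xi_t)=\nabla f(\x_t)+(\nabla f(\x_t;\xi_t)-\nabla f(\x_t))$. The only cosmetic difference is the order of these two steps (the paper records the scalar expansion first as a separate display, then sums), and your caution about the sign convention on $\sum_{k\in\cK}\bb_{(k)}^t$ is well placed, since the paper's notation there silently absorbs the $\pm$ from the weights.
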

\begin{proof}
First, by the definition of $\z_t$ in Eq.~\eqref{eq:z}, we have
\begin{equation}\label{eq:nab_z_decom}
\dotprod{\nabla f(\x_t), \z_t}
=
\frac{\dotprod{\nabla f(\x_t), \nabla f(\x_t;\;\xi_t)}}{\norm{\nabla f(\x_t;\;\xi_t)}}
=
\frac{\dotprod{\nabla f(\x_t), \nabla f(\x_t)}}{\norm{\nabla f(\x_t;\;\xi_t)}}
+
\frac{\dotprod{\nabla f(\x_t),  \nabla f(\x_t;\;\xi_t) - \nabla f(\x_t)}}{\norm{\nabla f(\x_t;\;\xi_t)}}.
\end{equation}
Then, we can obtain that
\begin{align*}
	\dotprod{\db_t, \nabla f(\x_t)}
	\stackrel{\eqref{eq:dt}}{=}&
	\dotprod{\db_t^+, \nabla f(\x_t)}
	+
	\dotprod{\db_t^-, \nabla f(\x_t)}\\
	=&
	\frac{4\dotprod{\nabla f(\x_t), \z_t}}{N} \left(\sum_{k=1}^{N/4}  \z_t^\top \ub_{(k)} - \sum_{k=3N/4+1}^{N} \z_t^\top \ub_{(k)} \right) 
	+ 
	\frac{4 \nabla^\top f(\x_t) \bZ_t^\perp}{N} \sum_{k\in\cK}\bb_{(k)}^t\\
	\stackrel{\eqref{eq:nab_z_decom}}{=}&
	\frac{4 \norm{\nabla f(\x_t)}^2}{N \norm{\nabla f(\x_t;\;\xi_t)}}\left(\sum_{k=1}^{N/4}  \z_t^\top \ub_{(k)} - \sum_{k=3N/4+1}^{N} \z_t^\top \ub_{(k)} \right)
	+ \frac{4 \nabla^\top f(\x_t) \bZ_t^\perp}{N} \sum_{k\in\cK}\bb_{(k)}^t\\
	&+ 
	\frac{4\dotprod{\nabla f(\x_t),  \nabla f(\x_t;\;\xi_t) - \nabla f(\x_t)}}{N\norm{\nabla f(\x_t;\;\xi_t)}}\left(\sum_{k=1}^{N/4}  \z_t^\top \ub_{(k)} - \sum_{k=3N/4+1}^{N} \z_t^\top \ub_{(k)} \right),
\end{align*}
where the second equality is because of Lemma~\ref{lem:dd}.
\end{proof}

Next, we will bound the value of $\sum_{k=1}^{N/4}  \z_t^\top \ub_{(k)} - \sum_{k=3N/4+1}^{N} \z_t^\top \ub_{(k)}$. 
Based on events  $\cE_{t,3}$-$\cE_{t,5}$, we can obtain the following result.
\begin{lemma}
Assuming that events $\cE_{t,3}$-$\cE_{t,5}$ hold, then we have the following property
\begin{equation}\label{eq:aas}
\sum_{k=1}^{N/4}  \z_t^\top \ub_{(k)} - \sum_{k=3N/4+1}^{N} \z_t^\top \ub_{(k)}
\leq 
-
N
+ 
\frac{N \Cd L \alpha}{\norm{\nabla f(\x_t;\;\xi_t)}}.
\end{equation}
\end{lemma}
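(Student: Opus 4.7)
The plan is to combine events $\cE_{t,3}$ through $\cE_{t,5}$ in two stages. In the first stage I will use $\cE_{t,3}$ and $\cE_{t,4}$ to pass from the ranking-sorted sums (indexed by $(k)$, which depend on noisy function values) to the ideal sums (indexed by $i_k$, which depend on the inner products $\dotprod{\z_t,\ub_\cdot}$). In the second stage I will use $\cE_{t,5}$ together with the monotonicity from Eq.~\eqref{eq:idx} to bound each ideal sum by $\pm N/2$.

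For the first stage, adding the upper bound from $\cE_{t,3}$ to the negation of the lower bound from $\cE_{t,4}$ gives
\begin{equation*}
\sum_{k=1}^{N/4} \dotprod{\z_t, \ub_{(k)}} - \sum_{k=3N/4+1}^{N} \dotprod{\z_t, \ub_{(k)}}
\leq
\sum_{k=1}^{N/4} \dotprod{\z_t, \ub_{i_k}} - \sum_{k=3N/4+1}^{N} \dotprod{\z_t, \ub_{i_k}}
+ \frac{N \Cd L \alpha}{\norm{\nabla f(\x_t;\;\xi_t)}},
\end{equation*}
so the task reduces to bounding the difference of the two ideal sums by $-N$.

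For the second stage I note that Eq.~\eqref{eq:idx} makes $\dotprod{\z_t, \ub_{i_k}}$ nondecreasing in $k$. Event $\cE_{t,5}$ provides the anchors $\dotprod{\z_t, \ub_{i_{N/4}}} \leq -2$ and $\dotprod{\z_t, \ub_{i_{3N/4+1}}} \geq 2$, so monotonicity propagates these bounds to $\dotprod{\z_t, \ub_{i_k}} \leq -2$ for every $k \leq N/4$ and $\dotprod{\z_t, \ub_{i_k}} \geq 2$ for every $k \geq 3N/4+1$. Each of the two quartiles then contains $N/4$ terms with the same sign, giving $\sum_{k=1}^{N/4} \dotprod{\z_t, \ub_{i_k}} \leq -N/2$ and $\sum_{k=3N/4+1}^{N} \dotprod{\z_t, \ub_{i_k}} \geq N/2$. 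Subtracting yields the desired $-N$ bound on the difference of the ideal sums, and substituting into the first-stage display produces exactly Eq.~\eqref{eq:aas}.

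I do not expect any real obstacle; the argument is essentially bookkeeping once one recognizes that $\cE_{t,5}$ supplies a uniform $\pm 2$ separation between the two extreme quartiles of the ideal inner products. The only subtle point is sign management when combining $\cE_{t,3}$ and $\cE_{t,4}$ (since the latter must be negated before addition) and the fact that the two $\frac{N\Cd L\alpha}{2\norm{\nabla f(\x_t;\;\xi_t)}}$ error terms combine into the single $\frac{N\Cd L\alpha}{\norm{\nabla f(\x_t;\;\xi_t)}}$ appearing in the claim.
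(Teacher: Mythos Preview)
Your proposal is correct and follows essentially the same approach as the paper: first use $\cE_{t,3}$ and $\cE_{t,4}$ to replace the $(k)$-indexed sums by the $i_k$-indexed sums at the cost of the combined error $\frac{N\Cd L\alpha}{\norm{\nabla f(\x_t;\xi_t)}}$, then use monotonicity from Eq.~\eqref{eq:idx} together with $\cE_{t,5}$ to bound each ideal sum by $\pm N/2$. The only cosmetic difference is that the paper first bounds $\sum_{k=1}^{N/4}\dotprod{\z_t,\ub_{i_k}}\le \tfrac{N}{4}\dotprod{\z_t,\ub_{i_{N/4}}}$ and then applies $\cE_{t,5}$, whereas you propagate the $\pm 2$ bound to every term before summing; the arithmetic is identical.
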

\begin{proof}
	We have
\begin{align*}
	&
	\sum_{k=1}^{N/4} \dotprod{\z_t, \ub_{(k)}} - \sum_{k=3N/4+1}^{N}\dotprod{\z_t, \ub_{(k)}}\\
	\stackrel{\eqref{eq:up1}\eqref{eq:up2}}{\le}&
	\sum_{k=1}^{N/4} \dotprod{\z_t, \ub_{i_k}} + \frac{N \Cd L \alpha}{2\norm{\nabla f(\x_t;\;\xi_t)}} 
	- \sum_{k=3N/4+1}^{N} \dotprod{\z_t, \ub_{i_k}} + \frac{N \Cd L \alpha}{2\norm{\nabla f(\x_t;\;\xi_t)}}\\
	\stackrel{\eqref{eq:idx}}{\leq}&
	\frac{N}{4} \dotprod{\z_t, \ub_{i_{N/4}}}  
	- 
	\frac{N}{4} \dotprod{\z_t, \ub_{i_{3N/4+1}}} 
	+
	\frac{N \Cd L \alpha}{\norm{\nabla f(\x_t;\;\xi_t)}} \\
	\stackrel{\eqref{eq:E5}}{\leq}&
	-
	N
	+ 
	\frac{N \Cd L \alpha}{\norm{\nabla f(\x_t;\;\xi_t)}},
\end{align*}
where the last inequality is because if $\dotprod{\z_t, \ub_{3N/4+1}} \geq 2$, and $\dotprod{\z_t, \ub_{N/4}} \leq -2$ shown in Eq.~\eqref{eq:E5}. 
\end{proof}

Based on the above lemmas, we will show  how the function value decays after one-step updates if the smooth parameter $\alpha$ is properly chosen. 
\begin{lemma}\label{lem:dec}
Assume that $f(\x)$ and $f(\x;\;\xi)$ are $L$-smooth, and events $\cE_{t,1}$ - $\cE_{t,5}$ hold.
If the smooth parameter $\alpha$ satisfies $0< \alpha \leq \frac{\norm{\nabla f(\x_t;\;\xi_t)}}{2\Cd L}$, then Algorithm~\ref{alg:SA1} has the following descent property
	\begin{equation}\label{eq:dec}
	\begin{aligned}
	f(\x_{t+1}) 
	\leq&
	f(\x_t) 
	- 
	2\eta_t \frac{\norm{\nabla f(\x_t)}^2}{\norm{\nabla f(\x_t;\;\xi_t)}}  
	+ 
	\eta_t\frac{4 \nabla^\top f(\x_t) \bZ_t^\perp}{N} \sum_{k\in\cK}\bb_{(k)}^t\\
	&+ 
	\eta_t\frac{4\dotprod{\nabla f(\x_t),  \nabla f(\x_t;\;\xi_t) - \nabla f(\x_t)}}{N\norm{\nabla f(\x_t;\;\xi_t)}}\left(\sum_{k=1}^{N/4}  \z_t^\top \ub_{(k)} - \sum_{k=3N/4+1}^{N} \z_t^\top \ub_{(k)} \right)\\
	&+ 
	\frac{4 \eta_t^2 L \Cd}{N}.
	\end{aligned}
	\end{equation}
\end{lemma}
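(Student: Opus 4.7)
The plan is to combine the $L$-smoothness of $f$ with the identity from Lemma~\ref{lem:d_nab} and with the bounds already packaged in the events $\cE_{t,1}$--$\cE_{t,5}$; no new probabilistic analysis is required. First I would apply $L$-smoothness to the update $\x_{t+1} = \x_t + \eta_t \db_t$ to obtain
\begin{equation*}
f(\x_{t+1}) \le f(\x_t) + \eta_t \dotprod{\nabla f(\x_t), \db_t} + \frac{L\eta_t^2}{2}\norm{\db_t}^2.
\end{equation*}
Substituting Eq.~\eqref{eq:d_nab} for $\dotprod{\nabla f(\x_t), \db_t}$ produces three summands. The second and third of these already match the middle lines of Eq.~\eqref{eq:dec} verbatim (after attaching the prefactor $\eta_t$), while the first one, the ``signal'' term, still carries the raw shift $S_t := \sum_{k=1}^{N/4}\z_t^\top \ub_{(k)} - \sum_{k=3N/4+1}^N \z_t^\top \ub_{(k)}$ rather than the target expression $-2\eta_t \norm{\nabla f(\x_t)}^2/\norm{\nabla f(\x_t;\;\xi_t)}$.

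Next I would convert the raw signal term into the target form using Eq.~\eqref{eq:aas}, which under $\cE_{t,3}$--$\cE_{t,5}$ gives $S_t \le -N + N\Cd L\alpha/\norm{\nabla f(\x_t;\;\xi_t)}$. The hypothesis $\alpha \le \norm{\nabla f(\x_t;\;\xi_t)}/(2\Cd L)$ bounds the error term by $N/2$, hence $S_t \le -N/2$. Since the coefficient $4\norm{\nabla f(\x_t)}^2/(N\norm{\nabla f(\x_t;\;\xi_t)})$ is non-negative, multiplying through yields exactly the first descent term $-2\eta_t \norm{\nabla f(\x_t)}^2/\norm{\nabla f(\x_t;\;\xi_t)}$ in Eq.~\eqref{eq:dec}. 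Crucially, the same substitution must \emph{not} be applied to the third summand of Eq.~\eqref{eq:d_nab}, whose coefficient $4\dotprod{\nabla f(\x_t), \nabla f(\x_t;\;\xi_t) - \nabla f(\x_t)}/(N\norm{\nabla f(\x_t;\;\xi_t)})$ has random sign driven by stochastic-gradient noise, so $S_t$ there must be preserved symbolically and controlled later when expectations over $\xi_t$ are taken.

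Finally, for the quadratic term I would use event $\cE_{t,2}$, which directly gives $\norm{\db_t}^2 \le 8\Cd/N$, hence $\frac{L\eta_t^2}{2}\norm{\db_t}^2 \le 4L\eta_t^2 \Cd/N$, matching the last summand of Eq.~\eqref{eq:dec}. Collecting the pieces yields the claim, and the whole argument is essentially a few lines of substitution. The only real bookkeeping hazard---and the step I expect to be the main obstacle---is keeping track of which occurrences of $S_t$ should be replaced by their event-based worst-case bound and which must be retained verbatim, since prematurely bounding the noise cross term would destroy the unbiased structure that subsequent lemmas rely on when taking expectations with respect to $\xi_t$ and $\{\ub_i\}$.
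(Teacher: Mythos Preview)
Your proposal is correct and matches the paper's proof essentially line for line: start from $L$-smoothness, substitute Lemma~\ref{lem:d_nab}, apply Eq.~\eqref{eq:aas} only to the signal term while leaving the noise cross term intact, invoke $\cE_{t,2}$ for $\norm{\db_t}^2$, and finish with the hypothesis on $\alpha$. Your caution about not bounding $S_t$ in the stochastic-noise summand is exactly the point that distinguishes this lemma's role from the subsequent expectation-based Lemma~\ref{lem:dec3}.
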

\begin{proof}
By the $L$-smoothness of $f(\x)$, we can obtain that
\begin{align*}
f(\x_{t+1}) 
\stackrel{\eqref{eq:L}\eqref{eq:update}}{\leq}& 
f(\x_t) + \eta_t \dotprod{\nabla f(\x_t), \db_t} + \frac{\eta_t^2 L}{2} \norm{\db_t}^2\\
\stackrel{\eqref{eq:d_nab}\eqref{eq:aas}}{\leq}&
f(\x_t) 
+ 
4\eta_t  \left( -1 + \frac{\Cd L\alpha}{\norm{\nabla f(\x_t;\;\xi_t)}}\right) \frac{\norm{\nabla f(\x_t)}^2}{\norm{\nabla f(\x_t;\;\xi_t)}}  
+ 
\eta_t\frac{4 \nabla^\top f(\x_t) \bZ_t^\perp}{N} \sum_{k\in\cK}\bb_{(k)}^t\\
&+ 
\eta_t\frac{4\dotprod{\nabla f(\x_t),  \nabla f(\x_t;\;\xi_t) - \nabla f(\x_t)}}{N\norm{\nabla f(\x_t;\;\xi_t)}}\left(\sum_{k=1}^{N/4}  \z_t^\top \ub_{(k)} - \sum_{k=3N/4+1}^{N} \z_t^\top \ub_{(k)} \right)
+ 
\frac{\eta_t^2 L}{2} \norm{\db_t}^2\\
\stackrel{\eqref{eq:E2}}{\leq}&
f(\x_t) 
+ 
4\eta_t  \left( -1 + \frac{\Cd L\alpha}{\norm{\nabla f(\x_t;\;\xi_t)}}\right) \frac{\norm{\nabla f(\x_t)}^2}{\norm{\nabla f(\x_t;\;\xi_t)}}  
+ 
\eta_t\frac{4 \nabla^\top f(\x_t) \bZ_t^\perp}{N} \sum_{k\in\cK}\bb_{(k)}^t\\
&+ 
\eta_t\frac{4\dotprod{\nabla f(\x_t),  \nabla f(\x_t;\;\xi_t) - \nabla f(\x_t)}}{N\norm{\nabla f(\x_t;\;\xi_t)}}\left(\sum_{k=1}^{N/4}  \z_t^\top \ub_{(k)} - \sum_{k=3N/4+1}^{N} \z_t^\top \ub_{(k)} \right)
+ 
\frac{4 \eta_t^2 L \Cd}{N}\\
\leq&
f(\x_t) 
- 
2\eta_t \frac{\norm{\nabla f(\x_t)}^2}{\norm{\nabla f(\x_t;\;\xi_t)}}  
+ 
\eta_t\frac{4 \nabla^\top f(\x_t) \bZ_t^\perp}{N} \sum_{k\in\cK}\bb_{(k)}^t\\
&+ 
\eta_t\frac{4\dotprod{\nabla f(\x_t),  \nabla f(\x_t;\;\xi_t) - \nabla f(\x_t)}}{N\norm{\nabla f(\x_t;\;\xi_t)}}\left(\sum_{k=1}^{N/4}  \z_t^\top \ub_{(k)} - \sum_{k=3N/4+1}^{N} \z_t^\top \ub_{(k)} \right)
+ 
\frac{4 \eta_t^2 L \Cd}{N},
\end{align*}
where the last inequality is because of $\alpha \leq \frac{\norm{\nabla f(\x_t;\;\xi_t)}}{2\Cd L}$.
\end{proof}

Next, we will define the natural filtration as the $\sigma$-algebra generated by the random variables $\xi_i$'s  observed to step $t$: 
\begin{equation}\label{eq:cF}
	\cF_t:= \sigma(\x_1,\xi_1,\xi_2,\dots, \xi_{t-1}).
\end{equation}

\begin{lemma}\label{lem:dec3}
Assume that $f(\x)$ and $f(\x;\;\xi)$ are $L$-smooth, and events $\cE_{t,1}$ - $\cE_{t,5}$ hold.
Let the smooth parameter $\alpha$ satisfy $0< \alpha \leq \frac{\norm{\nabla f(\x_t;\;\xi_t)}}{2\Cd L}$.
Let the stochastic gradient $\nabla f(\x_t;\;\xi)$ be of bounded second moment, that is, Assumption~\ref{ass:bsm} holds. 
Setting the step size $\eta_t = \heta_t \norm{\nabla f(\x_t;\;\xi_t)}$ with $\heta_t$ independent of $\xi_t$, then Algorithm~\ref{alg:SA1} has the following descent property
\begin{equation}\label{eq:dec3}
\begin{aligned}
	\EE\left[ f(\x_{t+1}) \mid \cF_t \right]
	\leq
	f(\x_t) 
	-
	2\heta_t \norm{\nabla f(\x_t)}^2
	+
	\frac{4\heta_t \nabla^\top  f(\x_t) \EE\left[\tbZ_{\xi_t}^\perp \mid \cF_t\right]}{N} \sum_{k\in\cK}\bb_{(k)}^t
	+
	\frac{4 \heta_t^2 L \Cd G_u^2}{N},
\end{aligned}
\end{equation}
where $\cF_t$ is defined in Eq.~\eqref{eq:cF} and we define
\begin{equation}\label{eq:tbz}
	\tbZ_{\xi_t}^\perp := \norm{\nabla f(\x_t;\;\xi_t)} 
	\cdot \bm{I}_d - \frac{\nabla f(\x_t;\;\xi_t) \nabla^\top f(\x_t;\;\xi_t)}{\norm{\nabla f(\x_t;\;\xi_t)}}.
\end{equation}
\end{lemma}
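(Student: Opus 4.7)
The plan is to take Eq.~\eqref{eq:dec} from Lemma~\ref{lem:dec}, substitute the prescribed step size $\eta_t = \heta_t \norm{\nabla f(\x_t;\;\xi_t)}$, and then take conditional expectation over $\xi_t$ given $\cF_t$, handling each of the four terms on the right-hand side in turn.

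Three of the four terms are mechanical. The leading term $-2\eta_t\norm{\nabla f(\x_t)}^2/\norm{\nabla f(\x_t;\;\xi_t)}$ collapses immediately to $-2\heta_t\norm{\nabla f(\x_t)}^2$ because the $\norm{\nabla f(\x_t;\;\xi_t)}$ factors cancel and the result is already $\cF_t$-measurable. The final term $4\eta_t^2 L \Cd/N = 4\heta_t^2 \norm{\nabla f(\x_t;\;\xi_t)}^2 L\Cd/N$ becomes $4\heta_t^2 L\Cd \,\EE[\norm{\nabla f(\x_t;\;\xi_t)}^2\mid\cF_t]/N$, which is bounded by $4\heta_t^2 L\Cd G_u^2/N$ via Assumption~\ref{ass:bsm}.

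For the term carrying $\bZ_t^\perp$, the key algebraic move is to absorb the extra factor $\norm{\nabla f(\x_t;\;\xi_t)}$ introduced by $\eta_t$ using the identity $\norm{\nabla f(\x_t;\;\xi_t)}\bZ_t^\perp [\bZ_t^\perp]^\top = \norm{\nabla f(\x_t;\;\xi_t)}(\bm{I}_d - \z_t\z_t^\top) = \tbZ_{\xi_t}^\perp$, which is exactly the definition in Eq.~\eqref{eq:tbz}. Pulling the $\cF_t$-measurable factor $\heta_t$ out of the conditional expectation then produces the $\EE[\tbZ_{\xi_t}^\perp\mid\cF_t]$ appearing in the claim.

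The main obstacle is to dispose of the remaining \emph{stochastic-gradient noise} term, carrying $\dotprod{\nabla f(\x_t),\,\nabla f(\x_t;\;\xi_t) - \nabla f(\x_t)}$ multiplied by $\sum_{k=1}^{N/4}\z_t^\top \ub_{(k)} - \sum_{k=3N/4+1}^N \z_t^\top \ub_{(k)}$. The first factor has zero conditional mean by unbiasedness of the stochastic gradient, but the second factor depends on $\xi_t$ through both $\z_t$ and the order statistics $(k)$, so the product does not simply factor under the conditional expectation. The proof will need to argue that this product has non-positive conditional expectation---most plausibly by invoking the one-sided bound of Eq.~\eqref{eq:aas} (which, under $\alpha \leq \norm{\nabla f(\x_t;\;\xi_t)}/(2\Cd L)$, forces the sum to be bounded above by a non-positive quantity) together with a sign/symmetry argument on the residual $\nabla f(\x_t;\;\xi_t) - \nabla f(\x_t)$---so that it may be dropped from the upper bound. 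Once this noise term is eliminated, combining the three remaining contributions yields Eq.~\eqref{eq:dec3}.
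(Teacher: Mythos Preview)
Your treatment of the three ``mechanical'' terms matches the paper exactly, including the identification $\norm{\nabla f(\x_t;\xi_t)}\,\bZ_t^\perp[\bZ_t^\perp]^\top=\tbZ_{\xi_t}^\perp$ that turns the orthogonal-component term into the form appearing in Eq.~\eqref{eq:dec3}.

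For the noise term, however, the paper does \emph{not} use any sign or symmetry argument. After substituting $\eta_t=\heta_t\norm{\nabla f(\x_t;\xi_t)}$, the term becomes
\[
\frac{4\heta_t}{N}\,\dotprod{\nabla f(\x_t),\,\nabla f(\x_t;\xi_t)-\nabla f(\x_t)}\Bigl(\sum_{k\le N/4}\z_t^\top\ub_{(k)}-\sum_{k>3N/4}\z_t^\top\ub_{(k)}\Bigr),
\]
and the paper simply pulls the entire parenthesized sum outside $\EE[\,\cdot\mid\cF_t]$ and applies $\EE[\nabla f(\x_t;\xi_t)\mid\cF_t]=\nabla f(\x_t)$, so the term vanishes identically. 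The same extraction is performed on $\sum_{k\in\cK}\bb_{(k)}^t$ in the $\bZ_t^\perp$ term.

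Your concern that $\z_t$, $\bZ_t^\perp$, and the ranking $(k)$ all depend on $\xi_t$---so these sums are not $\cF_t$-measurable---is well-founded, and the paper offers no justification for the extraction. But your proposed alternative does not close the gap either: bounding the sum via Eq.~\eqref{eq:aas} gives only a one-sided bound on the second factor, while $\dotprod{\nabla f(\x_t),\,\nabla f(\x_t;\xi_t)-\nabla f(\x_t)}$ has no definite sign, so a one-sided bound on one factor cannot control the product. In short, the paper's proof is simpler than you anticipate but is unjustified precisely at the step you flag; your proposal correctly identifies the difficulty but does not yet resolve it.
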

\begin{proof}
	First, by Eq.~\eqref{eq:dec} and the step size $\eta_t = \heta_t \norm{\nabla f(\x_t;\;\xi_t)}$, we have 
	\begin{align*}
		f(\x_{t+1}) 
		\stackrel{\eqref{eq:dec}}{\leq}&
		f(\x_t)  
		- 
		2\eta_t \frac{\norm{\nabla f(\x_t)}^2}{\norm{\nabla f(\x_t;\;\xi_t)}}  
		+ 
		\eta_t\frac{4 \nabla^\top f(\x_t) \bZ_t^\perp}{N} \sum_{k\in\cK}\bb_{(k)}^t\\
		&+ 
		\frac{4\eta_t\dotprod{\nabla f(\x_t),  \nabla f(\x_t;\;\xi_t) - \nabla f(\x_t)}}{N\norm{\nabla f(\x_t;\;\xi_t)}}\left(\sum_{k=1}^{N/4}  \z_t^\top \ub_{(k)} - \sum_{k=3N/4+1}^{N} \z_t^\top \ub_{(k)} \right)
		+ 
		\frac{4 \eta_t^2 L \Cd}{N}\\
		=&
		f(\x_t)  
		-
		2 \heta_t  \norm{\nabla f(\x_t)}^2 
		+ 
		\frac{4\heta_t \norm{\nabla f(\x_t;\;\xi_t)} \nabla^\top f(\x_t) \bZ_t^\perp}{N} \sum_{k\in\cK}\bb_{(k)}^t\\
		&
		+ 
		\frac{4\heta_t \dotprod{\nabla f(\x_t),  \nabla f(\x_t;\;\xi_t) - \nabla f(\x_t)}}{N}
		\left(\sum_{k=1}^{N/4}  \z_t^\top \ub_{(k)} - \sum_{k=3N/4+1}^{N} \z_t^\top \ub_{(k)} \right)\\
		&+ 
		\frac{4 \heta_t^2 L \Cd \norm{\nabla f(\x_t;\;\xi_t)}^2}{N},
	\end{align*}
	where the equality  is because the setting $\eta_t = \heta_t \norm{\nabla f(\x_t;\;\xi_t)}$.
	
	Taking expectation to above equation conditioned on $\cF_t$, we can obtain that
	\begin{align*}
		&\EE\left[ f(\x_{t+1}) \mid \cF_t \right]\\
		\leq&
		f(\x_t) 
		-
		2\heta_t \norm{\nabla f(\x_t)}^2
		+
		\frac{4\heta_t \EE\left[\norm{\nabla f(\x_t;\;\xi_t)} \nabla^\top f(\x_t) \bZ_t^\perp \mid \cF_t\right]}{N} \sum_{k\in\cK}\bb_{(k)}^t\\
		&
		+ 
		\frac{4\heta_t \dotprod{\nabla f(\x_t), \EE\left[ \nabla f(\x_t;\;\xi_t) \mid \cF_t \right] - \nabla f(\x_t)}}{N}
		\left(\sum_{k=1}^{N/4}  \z_t^\top \ub_{(k)} - \sum_{k=3N/4+1}^{N} \z_t^\top \ub_{(k)} \right)\\
		&
		+ 
		\frac{4 \heta_t^2 L \Cd \EE\left[\norm{\nabla f(\x_t;\;\xi_t)}^2 \mid \cF_t\right]}{N}\\
		=&
		f(\x_t) 
		-
		2\heta_t \norm{\nabla f(\x_t)}^2
		+
		\frac{4\heta_t \EE\left[\norm{\nabla f(\x_t;\;\xi_t)} \nabla^\top f(\x_t) \bZ_t^\perp \mid \cF_t\right]}{N} \sum_{k\in\cK}\bb_{(k)}^t\\
		&
		+ 
		\frac{4 \heta_t^2 L \Cd \EE\left[\norm{\nabla f(\x_t;\;\xi_t)}^2 \mid \cF_t\right]}{N}\\
		\leq&
		f(\x_t) 
		-
		2\heta_t \norm{\nabla f(\x_t)}^2
		+
		\frac{4\heta_t \EE\left[\norm{\nabla f(\x_t;\;\xi_t)} \nabla^\top f(\x_t) \bZ_t^\perp \mid \cF_t\right]}{N} \sum_{k\in\cK}\bb_{(k)}^t\\
		&+
		\frac{4 \heta_t^2 L \Cd G_u^2}{N},
	\end{align*} 
	where the equality is because of $\EE\left[ \nabla f(\x_t;\;\xi_t) \mid \cF_t \right] = \nabla f(\x_t)$, and the last inequality is because of the Assumption~\ref{ass:bsm}.
	
	Furthermore, by the definition of $\bZ_t^\perp$, we can obtain that
	\begin{align*}
		&\EE\left[\norm{\nabla f(\x_t;\;\xi_t)} \nabla^\top f(\x_t) \bZ_t^\perp \mid \cF_t\right]\\
		=&
		\EE\left[\norm{\nabla f(\x_t;\;\xi_t)} \nabla^\top f(\x_t) \left(\bm{I}_d - \frac{\nabla f(\x_t;\;\xi_t) \nabla^\top f(\x_t;\;\xi_t)}{\norm{\nabla f(\x_t;\;\xi_t)}^2}\right) \mid \cF_t \right]\\
		=&
		\nabla^\top f(\x_t) \EE
		\left[\norm{\nabla f(\x_t;\;\xi_t)} 
		\cdot \bm{I} - \frac{\nabla f(\x_t;\;\xi_t) \nabla^\top f(\x_t;\;\xi_t)}{\norm{\nabla f(\x_t;\;\xi_t)}} \mid \cF_t \right]\\
		=&
		\nabla^\top f(\x_t) \EE \left[ \tbZ_{\xi_t}^\perp \mid \cF_t \right].
	\end{align*}
	
	Combining the above results, we can obtain that 
	\begin{align*}
		\EE\left[ f(\x_{t+1}) \mid \cF_t \right]
		\leq
		f(\x_t) 
		-
		2\heta_t \norm{\nabla f(\x_t)}^2
		+
		\frac{4\heta_t \nabla^\top  f(\x_t) \EE\left[\tbZ_{\xi_t}^\perp \mid \cF_t\right]}{N} \sum_{k\in\cK}\bb_{(k)}^t
		+
		\frac{4 \heta_t^2 L \Cd G_u^2}{N}.
	\end{align*}

\end{proof}

\begin{lemma}\label{lem:dec1}
	Let  functions $f(\x)$ and $f(\x;\;\xi)$  satisfy the properties in Lemma~\ref{lem:dec3}.
	Furthermore, assume that $f(\x)$ is also $\mu$-strongly convex. 
Assume conditions in Lemma~\ref{lem:dec3} all hold. 
By setting step size $\eta_t = \heta_t\norm{\nabla f(\x_t;\;\xi_t)}$ with $\heta_t = \frac{1}{2\mu t}$,  then it holds that
\begin{equation}\label{eq:dec1}
\begin{aligned}
&\EE\left[f(\x_{t+1}) - f(\x^*) \mid \cF_t\right]\\
\leq&
\left(1 - \frac{2}{t}\right)\Big(f(\x_t) - f(\x^*) \Big) 
+
\frac{2 \nabla^\top f(\x_t) \EE\left[\tbZ_{\xi_t}^\perp \mid \cF_t\right]  }{N\mu t} \sum_{k\in\cK}\bb_{(k)}^t
+ 
\frac{  L \Cd G_u^2}{ N \mu^2 t^2},
\end{aligned}
\end{equation}
where $\x^*$ is optimal point of $f(\x)$.
\end{lemma}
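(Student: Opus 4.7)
The plan is to deduce Lemma~\ref{lem:dec1} from Lemma~\ref{lem:dec3} by combining its per-iteration bound with the standard quadratic growth consequence of $\mu$-strong convexity, and then simply substituting the specific step size schedule $\heta_t=\frac{1}{2\mu t}$.

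First, I would start from the bound
\begin{equation*}
\EE\left[f(\x_{t+1}) \mid \cF_t\right]
\leq f(\x_t) - 2\heta_t \norm{\nabla f(\x_t)}^2 + \frac{4\heta_t \nabla^\top f(\x_t) \EE\left[\tbZ_{\xi_t}^\perp \mid \cF_t\right]}{N}\sum_{k\in\cK}\bb_{(k)}^t + \frac{4\heta_t^2 L \Cd G_u^2}{N}
\end{equation*}
proved in Lemma~\ref{lem:dec3}, subtract $f(\x^*)$ from both sides, and then invoke the Polyak--{\L}ojasiewicz-type inequality that follows from $\mu$-strong convexity, namely
\begin{equation*}
\norm{\nabla f(\x_t)}^2 \;\geq\; 2\mu\bigl(f(\x_t) - f(\x^*)\bigr).
\end{equation*}
Since the coefficient $-2\heta_t$ in front of $\norm{\nabla f(\x_t)}^2$ is negative, substituting this lower bound yields
\begin{equation*}
\EE\left[f(\x_{t+1}) - f(\x^*) \mid \cF_t\right]
\leq (1 - 4\mu\heta_t)\bigl(f(\x_t)-f(\x^*)\bigr) + \frac{4\heta_t \nabla^\top f(\x_t) \EE\left[\tbZ_{\xi_t}^\perp \mid \cF_t\right]}{N}\sum_{k\in\cK}\bb_{(k)}^t + \frac{4\heta_t^2 L \Cd G_u^2}{N}.
\end{equation*}

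Finally, I would plug in $\heta_t = \frac{1}{2\mu t}$, which gives $1 - 4\mu\heta_t = 1-\frac{2}{t}$, $\frac{4\heta_t}{N} = \frac{2}{N\mu t}$, and $\frac{4\heta_t^2}{N} = \frac{1}{N\mu^2 t^2}$. These are exactly the coefficients appearing in Eq.~\eqref{eq:dec1}, so the claimed inequality follows immediately.

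There is essentially no real obstacle in this lemma: the analytic work has already been done in Lemma~\ref{lem:dec3}, and the remaining step is a one-line application of strong convexity followed by bookkeeping of constants. The only minor care point is to note that the coefficient of $\norm{\nabla f(\x_t)}^2$ is negative so that applying the lower bound $\norm{\nabla f(\x_t)}^2 \geq 2\mu(f(\x_t)-f(\x^*))$ preserves the direction of the inequality; once this is observed, the algebra with the chosen $\heta_t$ is routine.
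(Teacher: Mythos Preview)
Your proposal is correct and matches the paper's own proof essentially line for line: start from the bound of Lemma~\ref{lem:dec3}, subtract $f(\x^*)$, apply the strong-convexity inequality $\norm{\nabla f(\x_t)}^2 \ge 2\mu\bigl(f(\x_t)-f(\x^*)\bigr)$ (Lemma~\ref{lem:str_cvx}), and then substitute $\heta_t = \frac{1}{2\mu t}$ to obtain the stated coefficients.
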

\begin{proof}
First, we represent Eq.~\eqref{eq:dec3} as
\begin{align*}
&\EE\left[ f(\x_{t+1}) - f(\x^*)\mid \cF_t \right]\\
\leq&
f(\x_t) - f(\x^*)
-
2\heta_t \norm{\nabla f(\x_t)}^2
+
\frac{4\heta_t \nabla^\top  f(\x_t) \EE\left[\tbZ_{\xi_t}^\perp \mid \cF_t\right]}{N} \sum_{k\in\cK}\bb_{(k)}^t
+
\frac{4 \heta_t^2 L \Cd G_u^2}{N}\\
\leq&
\Big(1 - 4\mu\heta_t\Big)\cdot \Big(f(\x_t) - f(\x^*)\Big)
+
\frac{4\heta_t \nabla^\top  f(\x_t) \EE\left[\tbZ_{\xi_t}^\perp \mid \cF_t\right]}{N} \sum_{k\in\cK}\bb_{(k)}^t
+
\frac{4 \heta_t^2 L \Cd G_u^2}{N}\\
=&
\left(1 - \frac{2}{t}\right)\cdot \Big(f(\x_t) - f(\x^*)\Big)
+
\frac{2 \nabla^\top  f(\x_t) \EE\left[\tbZ_{\xi_t}^\perp \mid \cF_t\right]}{N\mu t} \sum_{k\in\cK}\bb_{(k)}^t
+
\frac{  L \Cd G_u^2}{N\mu^2t^2},
\end{align*}
where the last inequality is because of Lemma~\ref{lem:str_cvx} and the last equality is because of $\heta_t = \frac{1}{2\mu t}$.
\end{proof}

\begin{lemma}\label{lem:dec_2}
	Let  functions $f(\x)$ and $f(\x;\;\xi)$  satisfy the properties in Lemma~\ref{lem:dec1}.
	All conditions in Lemma~\ref{lem:dec1} also hold. 
	Let $\cE_t = \cap_{i=1,\dots,^5} \cE_{t,i}$ be the event that events $\cE_{t,1}$-$\cE_{t,5}$ all hold. 
	We assume that events $\{\cE_{i}\}$ with $i = 1,\dots, t$ all hold.
Then, it holds that
\begin{equation}\label{eq:dec2}
\begin{aligned}
&\EE\left[f(\x_{t+1}) - f(\x^*)\right]\\
\leq&
\frac{2}{N \mu t(t-1) }\sum_{i=2}^{t} (i-1) \EE\left[ \EE \left[ \nabla^\top f(\x_i) \EE\left[ \bZ_{\xi_i}^\perp \mid \cF_i \right] \mid \cF_{i-1} \right]   \right] \sum_{k\in\cK}\bb_{(k)}^i
+
\frac{  L \Cd G_u^2}{4 N \mu^2 t}.
\end{aligned}
\end{equation}
\end{lemma}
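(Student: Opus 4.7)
The plan is to convert the one-step bound in Lemma~\ref{lem:dec1} into a telescoping recursion via an appropriate integrating factor and then sum from $i = 2$ to $i = t$. Throughout I will abbreviate $a_i := f(\x_i) - f(\x^*)$ and let
\begin{equation*}
	R_i := \nabla^\top f(\x_i)\, \EE\left[\tbZ_{\xi_i}^\perp \mid \cF_i\right]\sum_{k\in\cK}\bb_{(k)}^i
\end{equation*}
denote the stochastic cross-term that appears in Eq.~\eqref{eq:dec1}. First, I take unconditional expectations on both sides of Eq.~\eqref{eq:dec1}; by the tower property, $\EE[R_i] = \EE[\EE[R_i \mid \cF_{i-1}]]$, which is exactly the nested conditioning recorded in the statement of Lemma~\ref{lem:dec_2}.

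Next, I multiply the resulting recursion by the integrating factor $i(i-1)$ and exploit the key identity $i(i-1)(1 - 2/i) = (i-1)(i-2)$. Setting $W_i := (i-1)(i-2)\,\EE[a_i]$ so that the left-hand side after multiplication is exactly $W_{i+1}$, the recursion becomes
\begin{equation*}
	W_{i+1} \;\leq\; W_i \;+\; \tfrac{2(i-1)}{N\mu}\,\EE[R_i] \;+\; \tfrac{L\,\Cd\, G_u^2\, (i-1)}{N\mu^2\, i}.
\end{equation*}
Telescoping from $i = 2$ to $i = t$ and noting $W_2 = (1)(0)\,\EE[a_2] = 0$ gives
\begin{equation*}
	t(t-1)\,\EE[a_{t+1}] \;\leq\; \tfrac{2}{N\mu}\sum_{i=2}^t (i-1)\,\EE[R_i] \;+\; \tfrac{L\,\Cd\, G_u^2}{N\mu^2}\sum_{i=2}^t \tfrac{i-1}{i}.
\end{equation*}

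Dividing through by $t(t-1)$ recovers the cross-term piece of Eq.~\eqref{eq:dec2} exactly. For the residual noise sum I bound $(i-1)/i \leq 1$, so $\sum_{i=2}^t (i-1)/i \leq t$, and after division by $t(t-1)$ this contributes an $\cO(1/t)$ term matching the form of the stated $L\,\Cd\, G_u^2/(4N\mu^2\, t)$ bound up to an absolute constant. The main bookkeeping challenge is identifying the integrating factor $i(i-1)$: any smaller polynomial fails to cancel the contraction $(1-2/i)$ and leaves a stray drift, while this particular choice makes the boundary term $W_2$ vanish automatically and yields a clean shift of index in the recursion. Once that factor is in place the remainder of the argument is purely algebraic and uses nothing beyond Lemma~\ref{lem:dec1} and the tower property, so I expect no further conceptual obstacle in completing the proof.
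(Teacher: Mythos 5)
Your proof is correct and is essentially the paper's own argument in different clothing: the integrating factor $i(i-1)$ is exactly the reciprocal of the contraction product $\prod_{j=i+1}^{t}\left(1-\frac{2}{j}\right)=\frac{i(i-1)}{t(t-1)}$ that the paper computes when unrolling Eq.~\eqref{eq:dec1} via Lemma~\ref{lem:recs}, your boundary term $W_2=0$ plays the role of the vanishing product at $i=2$, and your telescoped cross-term coefficient $\frac{2(i-1)}{N\mu t(t-1)}$ matches the stated bound exactly. The one discrepancy is the residual constant: your (correct) computation yields $\frac{L\Cd G_u^2}{N\mu^2 t}$ rather than the stated $\frac{L\Cd G_u^2}{4N\mu^2 t}$, and since $\frac{i-1}{i}\geq\frac{1}{2}$ for $i\geq 2$ the quantity $\frac{1}{t(t-1)}\sum_{i=2}^{t}\frac{i-1}{i}$ is at least $\frac{1}{2t}$, so the factor $\frac{1}{4}$ cannot be recovered from Eq.~\eqref{eq:dec1}; the paper's own proof inserts that $\frac{1}{4}$ without justification, so this is a constant-factor slip in the lemma statement rather than a gap in your argument.
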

\begin{proof}
Because event $\cE_i$ holds for $i = 1,\dots, t$, Eq.~\eqref{eq:dec1} holds for any iteration $1,\dots, t$.
Thus, combining with the law of total expectation, we unwind Eq.~\eqref{eq:dec1} recursively 
\begin{align*}
&\EE\Big[ f(\x_{t+1}) - f(\x^*) \Big]
=
\EE\Big[\EE\left[ f(\x_{t+1}) - f(\x^*) \mid \cF_t \right]\Big]\\
\stackrel{\eqref{eq:dec1}}{\leq}&
\left(1 - \frac{2}{t}\right)\EE\Big(f(\x_t) - f(\x^*) \Big) 
+
\frac{2 \EE\left[\nabla^\top f(\x_t) \EE\left[\tbZ_{\xi_t}^\perp \mid \cF_t\right]\right]  }{N\mu t} \sum_{k\in\cK}\bb_{(k)}^t
+ 
\frac{  L \Cd G_u^2}{ N \mu^2 t^2}\\
=&
\left(1 - \frac{2}{t}\right)\EE\Big[ \EE\left[f(\x_t) - f(\x^*) \mid \cF_{t-1} \right] \Big] 
+
\frac{2 \EE\left[ \EE\left[\nabla^\top f(\x_t) \EE\left[\tbZ_{\xi_t}^\perp \mid \cF_t\right] \mid \cF_{t-1} \right]  \right]}{N\mu t} \sum_{k\in\cK}\bb_{(k)}^t
+ 
\frac{  L \Cd G_u^2}{ N \mu^2 t^2}\\
&\vdots\\
\leq&
\frac{2}{N\mu}\sum_{i=2}^{t} 
\prod_{j=i+1}^t \left(1 - \frac{2}{j}\right) \frac{\EE\left[ \EE \left[ \nabla^\top f(\x_i) \EE\left[ \bZ_{\xi_i}^\perp \mid \cF_i \right] \mid \cF_{i-1}\right]   \right]}{i} \sum_{k\in\cK}\bb_{(k)}^i
\\
&
+\frac{  L \Cd G_u^2}{4 N \mu^2 } \sum_{i=2}^{t} \frac{1}{i^2} \prod_{j=i+1}^{t} \left(1 - \frac{2}{j}\right),
\end{align*}
where the last inequality is because of Lemma~\ref{lem:recs} with $\Delta_t = \EE\Big[ \EE\left[f(\x_t) - f(\x^*) \mid \cF_{t-1} \right] ]\Big] $, $\rho_t = 1 -\frac{2}{t}$, and $\beta_t = \frac{2 \EE\left[ \EE\left[\nabla^\top f(\x_t) \EE\left[\tbZ_{\xi_t}^\perp \mid \cF_t\right] \mid \cF_{t-1} \right]  \right]}{N\mu t} \sum_{k\in\cK}\bb_{(k)}^t
+ 
\frac{  L \Cd G_u^2}{ N \mu^2 t^2}$.

Furthermore,
\begin{equation*}
	\prod_{j=i+1}^{t} \left(1 - \frac{2}{j}\right) = \prod_{j=i+1}^{t} \frac{j-2}{j} = \frac{(i-1)i}{t(t-1)}.
\end{equation*}
Thus,
\begin{align*}
	\sum_{i=2}^{t} \frac{1}{i^2} \prod_{j=i+1}^{t} \left(1 - \frac{2}{j}\right)
	= 
	\sum_{i=2}^{t} \frac{1}{i^2}\frac{(i-1)i}{t(t-1)}
	\leq 
	\frac{1}{t}.
\end{align*}

Combining above results, we can obtain that
\begin{align*}
	\EE\left[f(\x_{t+1}) - f(\x^*)\right]
	\leq
	\frac{2}{N \mu t(t-1) }\sum_{i=2}^{t} (i-1) \EE\left[ \EE \left[ \nabla^\top f(\x_i) \EE\left[ \bZ_{\xi_i}^\perp \mid \cF_i \right] \mid \cF_{i-1} \right]   \right] \sum_{k\in\cK}\bb_{(k)}^i
	+
	\frac{  L \Cd G_u^2}{4 N \mu^2 t}.
\end{align*}


\end{proof}

\begin{lemma}\label{lem:dec4}
Let  functions $f(\x)$ and $f(\x;\;\xi)$  satisfy the properties in Lemma~\ref{lem:dec1}.
All conditions in Lemma~\ref{lem:dec3} also hold. 
Let $\cE_t = \cap_{i=1,\dots,^5} \cE_{t,i}$ be the event that events $\cE_{t,1}$-$\cE_{t,5}$ all hold. 
We assume that events $\{\cE_{i}\}$ with $i = 1,\dots, t$ all hold.
By setting $\eta_t = \heta_t \norm{\nabla f(\x_t;\;\xi_t)} $ with $\heta_t = \heta$, then it holds that
\begin{equation}\label{eq:dec4}
\begin{aligned}
&\frac{1}{T}\sum_{t=1}^{T} \EE\left[\norm{\nabla f(\x_t)}^2\right]\\
\leq& 
\frac{f(\x_1) - \EE\left[f(\x_{T+1})\right]}{2T\heta}
+ 
\sum_{t=1}^{T}\frac{2  \EE\left[ \EE\left[ \nabla^\top f(\x_t)\EE\left[ \tbZ_{\xi_t}^\perp \mid \cF_t \right] \mid \cF_{t-1} \right]\right]}{NT} \sum_{k\in\cK}\bb_{(k)}^t\\
&
+
\frac{2 \heta L \Cd G_u^2}{N}.
\end{aligned}
\end{equation}
\end{lemma}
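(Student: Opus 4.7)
The plan is to start from the one-step descent inequality in Lemma~\ref{lem:dec3}, applied with the constant choice $\heta_t = \heta$, and then telescope. Concretely, from \eqref{eq:dec3} I would move the $\norm{\nabla f(\x_t)}^2$ term to the left and the one-step decrease $f(\x_t) - \EE[f(\x_{t+1})\mid\cF_t]$ to the right, yielding
\begin{equation*}
2\heta \norm{\nabla f(\x_t)}^2
\le f(\x_t) - \EE[f(\x_{t+1})\mid\cF_t]
+ \frac{4\heta\, \nabla^\top f(\x_t)\,\EE[\tbZ_{\xi_t}^\perp\mid\cF_t]}{N}\sum_{k\in\cK}\bb_{(k)}^t
+ \frac{4\heta^2 L\,\Cd\, G_u^2}{N}.
\end{equation*}
This is legitimate under the standing hypotheses because the events $\cE_i$ are assumed to hold throughout, so \eqref{eq:dec3} applies at every iteration.

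Next I would take an unconditional expectation on both sides and apply the tower property. The left becomes $2\heta\,\EE[\norm{\nabla f(\x_t)}^2]$; the drift term on the right becomes $\EE[f(\x_t)] - \EE[f(\x_{t+1})]$; and the cross term becomes $\EE[\nabla^\top f(\x_t)\,\EE[\tbZ_{\xi_t}^\perp\mid\cF_t]\,\sum_{k\in\cK}\bb_{(k)}^t]$, which by a further conditioning on $\cF_{t-1}$ and the tower property equals the nested-expectation expression appearing in \eqref{eq:dec4}. I would then sum from $t=1$ to $T$; the first right-hand term telescopes to $f(\x_1) - \EE[f(\x_{T+1})]$ (using that $\x_1$ is deterministic), while the constant variance term $\frac{4\heta^2 L\,\Cd\, G_u^2}{N}$ simply multiplies by $T$.

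Finally, dividing both sides by $2T\heta$ gives exactly \eqref{eq:dec4}. The bulk of the work is therefore bookkeeping: the convergence content is already encoded in Lemma~\ref{lem:dec3}, and the remaining steps are a standard nonconvex stochastic-descent argument (rearrange, take expectation, telescope). The only mildly delicate point I expect is keeping track of the two layers of conditional expectation in the cross term, so that the outer expectation can be split as $\EE[\EE[\cdot\mid\cF_{t-1}]] = \EE[\EE[\EE[\cdot\mid\cF_t]\mid\cF_{t-1}]]$ to match the exact form stated; this is a direct application of the tower property and not a real obstacle.
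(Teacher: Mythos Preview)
Your proposal is correct and follows essentially the same route as the paper: rearrange \eqref{eq:dec3} with $\heta_t=\heta$, take unconditional expectations via the tower property (conditioning first on $\cF_t$ and then on $\cF_{t-1}$ to produce the nested form of the cross term), telescope over $t=1,\dots,T$, and divide by $2T\heta$. The only cosmetic difference is that the paper divides by $2\heta$ before taking expectations rather than at the end.
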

\begin{proof}
We can represent Eq.~\eqref{eq:dec3} with $\heta_t = \heta$ as follows
\begin{align*}
	\norm{\nabla f(\x_t)}^2
	\leq 
	\frac{f(\x_t) - \EE\left[ f(\x_{t+1}) \mid \cF_t\right]}{2\heta}
	+ 
	\frac{2 \nabla^\top  f(\x_t) \EE\left[\tbZ_{\xi_t}^\perp \mid \cF_t\right]}{N} \sum_{k\in\cK}\bb_{(k)}^t
	+
	\frac{2 \heta L \Cd G_u^2}{N}.
\end{align*}
By the law of total expectation, we can obtain that
\begin{align*}
	\EE\left[ \norm{\nabla f(\x_t)}^2 \right]
	=&
	\EE\left[\EE\left[ \norm{\nabla f(\x_t)}^2 \mid \cF_{t-1} \right]\right]\\
	\leq&
	\frac{\EE\left[ \EE\left[ f(\x_t) \mid \cF_{t-1} \right] \right] - \EE\left[ \EE\left[ \EE\left[f(\x_{t+1}) \mid \cF_t\right] \mid \cF_{t-1}\right] \right]}{2\heta}\\
	&+ 
	\frac{2 \EE\left[ \EE\left[ \nabla^\top f(\x_t)\EE\left[ \tbZ_{\xi_t}^\perp \mid \cF_t \right] \mid \cF_{t-1} \right]\right] }{N}\sum_{k\in\cK}\bb_{(k)}^t
	+
	\frac{2 \heta L \Cd G_u^2}{N}\\
	=&
	\frac{\EE\left[ f(\x_t) \right] - \EE\left[ f(\x_{t+1}) \right]}{2\heta} 
	+ 
	\frac{2 \EE\left[ \EE\left[ \nabla^\top f(\x_t)\EE\left[ \tbZ_{\xi_t}^\perp \mid \cF_t \right] \mid \cF_{t-1} \right]\right] }{N}\sum_{k\in\cK}\bb_{(k)}^t\\
	&+
	\frac{2 \heta L \Cd G_u^2}{N}.
\end{align*}

Telescoping above equation, we can obtain that
\begin{align*}
	&\frac{1}{T}\sum_{t=1}^{T} \EE\left[\norm{\nabla f(\x_t)}^2\right]\\
	\leq& 
	\frac{f(\x_1) - \EE\left[f(\x_{T+1})\right]}{2T\heta}
	+ 
	\sum_{t=1}^{T}\frac{2  \EE\left[ \EE\left[ \nabla^\top f(\x_t)\EE\left[ \tbZ_{\xi_t}^\perp \mid \cF_t \right] \mid \cF_{t-1} \right]\right]}{NT} \sum_{k\in\cK}\bb_{(k)}^t\\
	&
	+
	\frac{2 \heta L \Cd G_u^2}{N}.
\end{align*}
\end{proof}

Finally, we will define two new events that will also hold with a high probability.

\begin{lemma}\label{lem:gauss_bnd}
Let the objective function $f(\x)$ be $L$-smooth and Assumption~\ref{ass:bsm} hold.	Given $0<\delta<1$, with a probability at least $1-\delta$, it holds that
\begin{equation}\label{eq:gauss_bnd}
	\begin{aligned}
		&\left|	\sum_{i=2}^{t} (i-1)  \EE\left[ \EE \left[ \nabla^\top f(\x_i) \EE\left[ \bZ_{\xi_i}^\perp \mid \cF_i \right] \mid \cF_{i-1} \right]   \right]\sum_{k\in\cK}\bb_{(k)}^i \right|\\
		\leq& 
		\sqrt{ 8L N G_u^2 \cdot \sum_{i=2}^{t} (i-1)^2 \cdot \EE\Big[f(\x_i) - f(\x^*)\Big] \cdot\log\frac{2}{\delta} }.
	\end{aligned}
\end{equation}
Let $\cE_{t,6}$ be the event that Eq.~\eqref{eq:gauss_bnd} holds. Then we have $\Pr(\cE_{t,6}) \geq 1 - \delta$.
\end{lemma}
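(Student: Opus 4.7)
The plan is to recognize the quantity as a sum of independent sub-Gaussian scalars and bound it via an Azuma--Hoeffding tail.

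First, I introduce the deterministic vector
\[
c_i \;:=\; \EE\!\left[\EE\!\left[\nabla^\top f(\x_i)\,\EE[\tbZ_{\xi_i}^\perp\mid\cF_i]\mid\cF_{i-1}\right]\right].
\]
Every source of randomness is integrated out by the outer expectations, so $c_i$ is a deterministic row vector. The left-hand side of \eqref{eq:gauss_bnd} is then $\sum_{i=2}^{t}(i-1)\,c_i\cdot X_i$ with $X_i:=\sum_{k\in\cK}\bb_{(k)}^i$, and the $X_i$'s are independent across $i$ because the iteration-$i$ randomness $(\xi_i,\ub_1^i,\dots,\ub_N^i)$ is drawn afresh at each step. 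It therefore suffices to prove a concentration inequality for a sum of independent summands.

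Second, I would bound $\|c_i\|^2$. Since $\tbZ_{\xi_i}^\perp=\norm{\nabla f(\x_i;\xi_i)}(\bm{I}_d-\z_i\z_i^\top)$ has operator norm $\norm{\nabla f(\x_i;\xi_i)}$, Jensen's inequality plus Assumption~\ref{ass:bsm} gives $\|\EE[\tbZ_{\xi_i}^\perp\mid\cF_i]\|_{\mathrm{op}}\le\sqrt{\EE[\norm{\nabla f(\x_i;\xi_i)}^2\mid\cF_i]}\le G_u$, and $L$-smoothness gives $\norm{\nabla f(\x_i)}^2\le 2L(f(\x_i)-f(\x^*))$. Combining these with Cauchy--Schwarz applied to the outer expectations yields
\[
\|c_i\|^2 \;\le\; 2LG_u^2\,\EE[f(\x_i)-f(\x^*)].
\]

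Third, I need to bound the sub-Gaussian parameter of $c_i\cdot X_i$; this is the crux of the proof. By Lemma~\ref{lem:dd}, the orthogonal component $[\bZ_i^\perp]^\top\ub_j\sim\cN(\bm{0},\bm{I}_{d-1})$ is \emph{exactly} independent of the parallel component $\z_i^\top\ub_j$. Up to the Taylor remainder controlled by event $\cE_{i,1}$, the rank permutation $(k)$ is measurable with respect to the parallel components alone; conditioning on those components turns $\{[\bZ_i^\perp]^\top\ub_{(k)}\}_{k\in\cK}$ into $|\cK|=N/2$ i.i.d.\ standard Gaussians, so $c_i\cdot X_i$ is conditionally centered Gaussian with variance $\tfrac{N}{2}\|c_i\|^2$. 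Absorbing a safety factor for the residual $O(L\alpha^2)$ coupling from the remainder, $c_i\cdot X_i$ is sub-Gaussian with parameter
\[
\sigma_i^2 \;\le\; 2N\|c_i\|^2 \;\le\; 4LNG_u^2\,\EE[f(\x_i)-f(\x^*)].
\]
Finally, applying Azuma--Hoeffding to the independent sub-Gaussians $\{(i-1)\,c_i\cdot X_i\}_{i=2}^{t}$ with parameters $(i-1)^2\sigma_i^2$ gives
\[
\Pr\!\left(\Bigl|\sum_{i=2}^{t}(i-1)\,c_i\cdot X_i\Bigr|\ge\tau\right)\;\le\;2\exp\!\left(-\tfrac{\tau^2}{2\sum_{i=2}^{t}(i-1)^2\sigma_i^2}\right),
\]
and the choice $\tau=\sqrt{2\log(2/\delta)\sum_{i=2}^{t}(i-1)^2\sigma_i^2}$ together with the bound on $\sigma_i^2$ recovers exactly the right-hand side of \eqref{eq:gauss_bnd}.

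The main obstacle is the sub-Gaussian parameter estimate in the third step: the ranks $(k)$ are determined by the \emph{full} $\ub_j$'s through the noisy function values, not only by their parallel projections onto $\z_i$, so $X_i$ is not literally a Gaussian sum. The saving observation is the exact Gaussian independence between orthogonal and parallel components, so that the only extra coupling enters through the Taylor remainder, which is already $O(L\alpha^2)$ under $\cE_{i,1}$ and can be absorbed into the multiplicative constant above without changing the scaling.
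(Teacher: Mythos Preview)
Your overall strategy matches the paper's: recognize the $c_i$ as deterministic row vectors, treat $X_i=\sum_{k\in\cK}\bb_{(k)}^i$ as centered Gaussian with covariance $\tfrac{N}{2}\bm I_{d-1}$, and apply a tail bound to the resulting scalar sum. The paper differs only in two minor technical choices. First, it invokes the exact Gaussian tail (Lemma~\ref{lem:gaussian-tail}) on the sum $\sum_i (i-1)c_iX_i$ directly, rather than routing through a sub-Gaussian/Azuma--Hoeffding argument. Second, for $\|c_i\|^2$ it applies Jensen and then the explicit expansion
\[
\norm{\nabla^\top f(\x_i)\tbZ_{\xi_i}^\perp}^2 \le 4G_u^2\norm{\nabla f(\x_i)}^2
\]
(Eq.~\eqref{eq:ex}) instead of your operator-norm bound $\|\tbZ_{\xi_i}^\perp\|_{\mathrm{op}}\le\norm{\nabla f(\x_i;\xi_i)}$; the former loses a factor of $4$ in $\|c_i\|^2$, but since the paper keeps the exact Gaussian variance $\tfrac{N}{2}\|c_i\|^2$ while you inflate to $2N\|c_i\|^2$, the final constants coincide.

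Your caveat in the third step---that the rank permutation $(k)$ is determined by the \emph{full} $\ub_j$'s through the noisy function values, so $X_i$ is not literally a Gaussian sum---is more careful than the paper itself. The paper does not address this coupling at all: it simply asserts, via Lemma~\ref{lem:dd}, that the $\bb_{(k)}^i=[\bZ_i^\perp]^\top\ub_{(k)}$ are i.i.d.\ $\cN(\bm 0,\bm I_{d-1})$ independent of the parallel components, and hence that $\sum_{k\in\cK}\bb_{(k)}^i\sim\cN(\bm 0,\tfrac{N}{2}\bm I_{d-1})$ exactly. So your ``safety factor'' hedging, while not made rigorous, is already more scrupulous than the paper's own argument.
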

\begin{proof}
	First, by the definition of $\bb_{(k)}^t = [\bZ_t^\perp]^\top \ub_{(k)}$ in Lemma~\ref{lem:d_nab}, we can obtain that
\begin{align*}
	\sum_{k\in\cK}\bb_{(k)}^i \sim \cN(\bm{0}, \frac{N}{2} \bm{I}_{d-1}).
\end{align*}
Accordingly, we can obtain that
\begin{align*}
	\EE\left[ \EE \left[ \nabla^\top f(\x_i) \EE\left[ \bZ_{\xi_i}^\perp \mid \cF_i \right] \mid \cF_{i-1} \right]   \right] \sum_{k\in\cK}\bb_{(k)}^i \sim \cN\left(0, \frac{N}{2}\norm{\EE\left[ \EE \left[ \nabla^\top f(\x_i) \EE\left[ \bZ_{\xi_i}^\perp \mid \cF_i \right] \mid \cF_{i-1} \right]   \right]}^2\right)
\end{align*}
Furthermore, for different iteration $i$, $\sum_{k\in\cK}\bb_{(k)}^i$ is independent to each other. 
Thus, we can obtain that
\begin{align*}
	&\sum_{i=2}^{t} (i-1)  \EE\left[ \EE \left[ \nabla^\top f(\x_i) \EE\left[ \bZ_{\xi_i}^\perp \mid \cF_i \right] \mid \cF_{i-1} \right]   \right]\sum_{k\in\cK}\bb_{(k)}^i\\
	&\sim 
	\cN\left(0, \frac{N}{2}\sum_{i=2}^{t} (i-1)^2  \norm{\EE\left[ \EE \left[ \nabla^\top f(\x_i) \EE\left[ \bZ_{\xi_i}^\perp \mid \cF_i \right] \mid \cF_{i-1} \right]   \right]}^2
	\right).
\end{align*}
By Lemma~\ref{lem:gaussian-tail}, we can obtain that  with a probability at least $1-\delta$ that
\begin{align*}
	&\left|	\sum_{i=2}^{t} (i-1)  \EE\left[ \EE \left[ \nabla^\top f(\x_i) \EE\left[ \bZ_{\xi_i}^\perp \mid \cF_i \right] \mid \cF_{i-1} \right]   \right]\sum_{k\in\cK}\bb_{(k)}^i \right|\\
	\leq&
	\sqrt{ \left( \frac{N}{2}\sum_{i=2}^{t} (i-1)^2 \norm{ \EE\left[ \EE \left[ \nabla^\top f(\x_i) \EE\left[ \bZ_{\xi_i}^\perp \mid \cF_i \right] \mid \cF_{i-1} \right]   \right]}^2 \right) \cdot 2 \log\frac{2}{\delta}} \\
	\leq&
	\sqrt{   \left( N\sum_{i=2}^{t} (i-1)^2 \EE\left[ \EE\left[\EE\left[\norm{ \nabla^\top f(\x_i) \tbZ_{\xi_i}^\perp}^2  \mid \cF_i \right] \mid \cF_{i-1} \right]    \right]\right)\cdot \log\frac{2}{\delta}},
\end{align*}
where the last inequality is because of Jensen's inequality.

Furthermore,
\begin{equation}\label{eq:ex}
	\begin{aligned}
		&\EE\left[\norm{ \nabla^\top f(\x_i) \tbZ_{\xi_i}^\perp}^2 \mid \cF_i \right]\\
		\stackrel{\eqref{eq:tbz}}{=}&
		\EE 
		\left[\norm{ \norm{\nabla f(\x_i;\;\xi_i)} \nabla f(\x_i) - \frac{\dotprod{\nabla f(\x_i;\;\xi_i), \nabla f(\x_i)}}{\norm{\nabla f(\x_i;\;\xi_i)}} \nabla f(\x_i;\;\xi_i) }^2 \mid \cF_i\right]\\
		\leq&
		2 \EE\left[ \norm{\nabla f(\x_i;\;\xi_i)}^2 \norm{\nabla f(\x_i)}^2 + \frac{\norm{\nabla f(\x_i;\;\xi_i)}^4 \norm{\nabla f(\x_i)}^2}{\norm{\nabla f(\x_i;\;\xi_i)}^2} \mid \cF_i \right] \\
		\leq&
		4 G_u^2 \EE\left[ \norm{\nabla f(\x_i)}^2 \right],
	\end{aligned}
\end{equation}
where the last inequality is because of Assumption~\ref{ass:bsm}. 

Accordingly, we have
\begin{align*}
	&\EE\left[ \EE\left[\EE\left[\norm{ \nabla^\top f(\x_i) \tbZ_{\xi_i}^\perp}^2  \mid \cF_i \right] \mid \cF_{i-1} \right]  \right]
	\stackrel{\eqref{eq:ex}}{\leq}
	4 G_u^2 \EE\left[ \EE\left[ \norm{\nabla f(\x_i)}^2 \mid \cF_{i-1} \right]\right]\\
	\leq&
	8L G_u^2 \EE\Big[ \EE\left[ f(\x_i) - f(\x^*) \right] \mid \cF_{i-1} \Big]
	=
	8L G_u^2 \EE\Big[ f(\x_i) - f(\x^*) \Big],
\end{align*}
where the last inequality is because of Lemma~\ref{lem:L_smth}.

Therefore, we can obtain that with a probability at least $1-\delta$, it holds that
\begin{align*}
	&\left|	\sum_{i=2}^{t} (i-1)  \EE\left[ \EE \left[ \nabla^\top f(\x_i) \EE\left[ \bZ_{\xi_i}^\perp \mid \cF_i \right] \mid \cF_{i-1} \right]   \right]\sum_{k\in\cK}\bb_{(k)}^i \right|\\
	\leq& 
	\sqrt{ 8L N G_u^2 \cdot \sum_{i=2}^{t} (i-1)^2 \cdot \EE\Big[f(\x_i) - f(\x^*)\Big] \cdot\log\frac{2}{\delta} }.
\end{align*}

\end{proof}

\begin{lemma}
Let Assumption~\ref{ass:bsm} hold.	Given $0<\delta<1$, with a probability at least $1-\delta$, it holds that
\begin{equation}\label{eq:gauss_bndn}
\left|
\sum_{t=1}^{T}  \EE\left[ \EE \left[ \nabla^\top f(\x_t) \EE\left[ \bZ_{\xi_t}^\perp \mid \cF_t \right] \mid \cF_{t-1} \right]   \right]\sum_{k\in\cK}\bb_{(k)}^t
\right|
\leq
\sqrt{ 4 NG_u^2 \cdot \sum_{t=1}^{T} \EE\left[\norm{\nabla f(\x_t)}^2\right] \cdot \log\frac{2}{\delta}}.
\end{equation}
Let $\cE_{t,7}$ be the event that Eq.~\eqref{eq:gauss_bndn} holds. Then we have $\Pr(\cE_{t,7}) \geq 1 - \delta$.
\end{lemma}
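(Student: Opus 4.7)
The plan is to mimic the argument of Lemma~\ref{lem:gauss_bnd} almost verbatim, but without the extra telescoping weight $(i-1)$ and without invoking the $L$-smoothness bound at the end; the outcome is therefore a bound in terms of $\EE[\|\nabla f(\x_t)\|^2]$ rather than $\EE[f(\x_t)-f(\x^*)]$.

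First, I would observe that $\bb_{(k)}^t = [\bZ_t^\perp]^\top \ub_{(k)}$, and, as already used in Lemma~\ref{lem:gauss_bnd}, that
\[
\sum_{k\in\cK} \bb_{(k)}^t \sim \cN\!\Big(\bm 0,\,\tfrac{N}{2}\bm I_{d-1}\Big),
\]
with the vectors for distinct $t$'s being independent across iterations. Consequently, conditioning on all randomness up to iteration $t$, the quantity
\[
\EE\!\left[\EE\!\left[\nabla^\top f(\x_t)\,\EE[\bZ_{\xi_t}^\perp\mid\cF_t]\,\Big|\,\cF_{t-1}\right]\right]\sum_{k\in\cK}\bb_{(k)}^t
\]
is a deterministic linear functional of a centered Gaussian vector; summing over $t=1,\dots,T$ therefore yields a centered Gaussian scalar whose variance is
\[
\frac{N}{2}\sum_{t=1}^{T}\left\|\EE\!\left[\EE\!\left[\nabla^\top f(\x_t)\,\EE[\bZ_{\xi_t}^\perp\mid\cF_t]\,\Big|\,\cF_{t-1}\right]\right]\right\|^2.
\]

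Next, I would apply the Gaussian tail bound (Lemma~\ref{lem:gaussian-tail}) to conclude that, with probability at least $1-\delta$, the absolute value of the sum on the left-hand side of Eq.~\eqref{eq:gauss_bndn} is at most
\[
\sqrt{\;N\log\!\tfrac{2}{\delta}\;\sum_{t=1}^{T}\left\|\EE\!\left[\EE\!\left[\nabla^\top f(\x_t)\,\EE[\bZ_{\xi_t}^\perp\mid\cF_t]\,\Big|\,\cF_{t-1}\right]\right]\right\|^2\;}.
\]
To control each summand I would use Jensen's inequality to pull the squared norm inside the nested conditional expectations, giving
\[
\left\|\EE\!\left[\EE\!\left[\nabla^\top f(\x_t)\,\EE[\bZ_{\xi_t}^\perp\mid\cF_t]\,\Big|\,\cF_{t-1}\right]\right]\right\|^2
\le \EE\!\left[\EE\!\left[\EE\!\left[\|\nabla^\top f(\x_t)\,\tbZ_{\xi_t}^\perp\|^2\mid\cF_t\right]\,\Big|\,\cF_{t-1}\right]\right].
\]

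Finally, I would reuse exactly the bound already derived in Eq.~\eqref{eq:ex} of the proof of Lemma~\ref{lem:gauss_bnd}, namely
\[
\EE\!\left[\|\nabla^\top f(\x_t)\,\tbZ_{\xi_t}^\perp\|^2 \mid \cF_t\right] \le 4G_u^2\,\|\nabla f(\x_t)\|^2,
\]
which follows from the definition of $\tbZ_{\xi_t}^\perp$ in Eq.~\eqref{eq:tbz} and Assumption~\ref{ass:bsm}. Taking the outer expectations and substituting back collapses the variance to $4G_u^2\sum_{t=1}^{T}\EE[\|\nabla f(\x_t)\|^2]$, which on combination with the Gaussian tail bound yields the claimed inequality. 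The only nontrivial step is really the Jensen manipulation and the reuse of Eq.~\eqref{eq:ex}; everything else is parallel to Lemma~\ref{lem:gauss_bnd}, so I do not expect any serious obstacle.
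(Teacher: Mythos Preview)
Your proposal is correct and matches the paper's own proof essentially line for line: identify the sum as a centered Gaussian with variance $\tfrac{N}{2}\sum_t\|\cdot\|^2$, apply the Gaussian tail bound (Lemma~\ref{lem:gaussian-tail}), then use Jensen together with the bound Eq.~\eqref{eq:ex} to replace each summand by $4G_u^2\,\EE[\|\nabla f(\x_t)\|^2]$. The only difference is cosmetic --- you spell out the Jensen step explicitly, whereas the paper simply cites Eq.~\eqref{eq:ex}.
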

\begin{proof}
The proof is similar to that of Lemma~\ref{lem:gauss_bnd}.
We have
\begin{align*}
	\sum_{t=1}^{T}  
	\EE\left[ \EE \left[ \nabla^\top f(\x_t) \EE\left[ \bZ_{\xi_t}^\perp \mid \cF_t \right] \mid \cF_{t-1} \right] \right] 
	\sum_{k\in\cK}\bb_{(k)}^t
	\sim 
	\cN\left(0, \frac{N}{2}\sum_{t=1}^{T}  \norm{\EE\left[ \EE \left[ \nabla^\top f(\x_t) \EE\left[ \bZ_{\xi_t}^\perp \mid \cF_t \right] \mid \cF_{t-1} \right]   \right]}^2
	\right).
\end{align*}
By Lemma~\ref{lem:gaussian-tail}, we can obtain that  with a probability at least $1-\delta$ that
\begin{align*}
&\left|
\sum_{t=1}^{T}  \EE\left[ \EE \left[ \nabla^\top f(\x_t) \EE\left[ \bZ_{\xi_t}^\perp \mid \cF_t \right] \mid \cF_{t-1} \right]   \right]\sum_{k\in\cK}\bb_{(k)}^t
\right|\\
\leq&
\sqrt{2\log\frac{2}{\delta} \cdot  \frac{N}{2}\sum_{t=1}^{T}  \norm{\EE\left[ \EE \left[ \nabla^\top f(\x_t) \EE\left[ \bZ_{\xi_t}^\perp \mid \cF_t \right] \mid \cF_{t-1} \right]   \right]}^2 }\\
\stackrel{\eqref{eq:ex}}{\leq}&
\sqrt{ 4 NG_u^2   \log\frac{2}{\delta} \cdot \sum_{t=1}^{T} \EE\left[\norm{\nabla f(\x_t)}^2\right]}.
\end{align*}
\end{proof}

\subsection{Main Theorems and Query Complexities}

First, we will give two main theorems which describe the convergence properties of Algorithm~\ref{alg:SA} when the objective function is $L$-smooth and $\mu$-strongly convex, and $L$-smooth but maybe nonconvex, respectively.

\begin{theorem}\label{thm:main}
	Assume that $f(\x)$ is $L$-smooth and $\mu$-strongly convex. 
	Each $f(\x;\;\xi)$ is also $L$-smooth.
	Let the stochastic gradient $\nabla f(\x;\;\xi)$ be of bounded second moment and its norm is lower bounded by $G_\ell$, that is, Assumption~\ref{ass:bsm} and Assumption~\ref{ass:lb} hold. 
	Given $0<\delta<1$, let $\cE_t = \cap_{i=1,\dots,6} \cE_{t,i}$ be the event that events $\cE_{t,1}$-$\cE_{t,6}$ (defined in Eq.~\eqref{eq:E1} - Eq.~\eqref{eq:E5} and Lemma~\ref{lem:gauss_bnd}, respectively) all hold. 
	Let events $\cE_t$ hold for $t = 1,\dots, T-1$. 
	Set the smooth parameter $\alpha$ satisfying $\alpha \leq \frac{G_\ell}{2\Cd L}$ with $\Cd$ defined in Eq.~\eqref{eq:d_up} and step size $\eta_t = \frac{\norm{\nabla f(\x_t;\;\xi_t)}}{2\mu t}$.
	Then Algorithm~\ref{alg:SA1} has the following convergence property
\begin{equation}\label{eq:main}
\EE\Big[f(\x_T) - f(\x^*)\Big]
\le 
\frac{\max \{\Us,\; f(\x_1) - f(\x^*)\}}{T},
\end{equation}
where we take the expectation  with respect to $\xi$ and $U_s$ is defined as 
\begin{equation}
\Us:= \frac{  3dL  G_u^2}{4 N \mu^2 } 
+ \frac{  3L G_u^2 \log\frac{1}{\delta}}{2 N \mu^2 }
+
\frac{72 L G_u^2 \log\frac{2}{\delta}}{N \mu^2}. 
\end{equation}
Furthermore,
\begin{equation}\label{eq:P}
\Pr\left(\cap_{t=1}^{T-1} \cE_t\right) \geq 1 - T \left( \frac{(N+2)\delta}{2} +  2 \exp\left( -N \cdot K(1/4 \Vert p) \right)\right), \mbox{ with } p = 0.0224
\end{equation}
where the binary Kullback--Leibler divergence is defined as
\begin{equation}\label{eq:KL1}
	K(q \Vert p) \;=\; q\ln\frac{q}{p}+(1-q)\ln\frac{1-q}{1-p}.
\end{equation}
\end{theorem}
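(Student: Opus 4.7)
The plan is to prove the bound by strong induction on $t$ and then conclude with a union-bound computation for the compound event. Set $U := \max\{U_s,\, f(\x_1) - f(\x^*)\}$ and work on the event $\bigcap_{t=1}^{T-1} \cE_t$ throughout. Note first that Assumption~\ref{ass:lb} gives $\norm{\nabla f(\x_t;\;\xi_t)} \geq G_\ell$, so the stated choice $\alpha \leq G_\ell/(2\Cd L)$ satisfies the smoothing-parameter condition required by Lemma~\ref{lem:dec3}; in particular, Lemmas~\ref{lem:dec1}, \ref{lem:dec2}, and \ref{lem:gauss_bnd} all become available at every iteration.

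The inductive claim is $\EE[f(\x_t) - f(\x^*)] \leq U/t$ for $t = 1,\dots,T$. The base case $t=1$ holds by the definition of $U$. For the inductive step, I would start from Eq.~\eqref{eq:dec2} in Lemma~\ref{lem:dec2}, then invoke $\cE_{t,6}$ (Lemma~\ref{lem:gauss_bnd}) to replace the Gaussian cross-sum by its magnitude bound. Substituting the inductive hypothesis $\EE[f(\x_i)-f(\x^*)] \leq U/i$ and using the elementary estimate $(i-1)^2/i \leq i-1$, we get $\sum_{i=2}^{t}(i-1)^2 U/i \leq U\,t(t-1)/2$. After simplification this yields
\begin{equation*}
\EE[f(\x_{t+1}) - f(\x^*)] \;\leq\; \frac{4}{\mu\sqrt{N}}\sqrt{\frac{L G_u^2\,U\,\log(2/\delta)}{t(t-1)}} \;+\; \frac{L\Cd G_u^2}{4N\mu^2 t}.
\end{equation*}

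Next, multiplying by $t+1$ and using $(t+1)/\sqrt{t(t-1)} \leq 3/\sqrt{2}$ and $(t+1)/t \leq 3/2$ for $t \geq 2$, the inductive target $\EE[f(\x_{t+1})-f(\x^*)] \leq U/(t+1)$ reduces to the quadratic inequality $A\sqrt{U} + B \leq U$ in $\sqrt{U}$, where $A = \sqrt{72\, L G_u^2 \log(2/\delta)/(N\mu^2)}$ and $B = 3L\Cd G_u^2/(8N\mu^2)$. Using the identity $A\sqrt{A^2+2B} \leq A^2 + B$ (verified by squaring), the choice $U = A^2 + 2B$ closes the induction. Expanding $\Cd = d + 2\log(1/\delta)$ shows $A^2 + 2B$ equals exactly $U_s$ as defined in the statement, so $U = \max\{U_s,\, f(\x_1)-f(\x^*)\}$ suffices.

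For the probability bound in Eq.~\eqref{eq:P}, apply a union bound per iteration: $\cE_{t,1}$ fails with probability at most $N\delta/2$ (Lemma~\ref{lem:E1}); $\cE_{t,3}$ and $\cE_{t,4}$ are deterministic consequences of $\cE_{t,1}$ (Lemmas~\ref{lem:E3}, \ref{lem:E4}); $\cE_{t,2}$ and $\cE_{t,6}$ each fail with probability at most $\delta$; and $\cE_{t,5}$ fails with probability at most $2\exp(-N K(1/4 \Vert p))$. Combining and then union-bounding over $t = 1,\dots,T-1$ produces the stated estimate (up to a harmless rescaling of $\delta$).

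The main obstacles will be twofold. First, the $t \mapsto t+1$ step of the induction is tight in the constants, and keeping the $(t+1)/\sqrt{t(t-1)}$ factor under control forces the boundary case $t=2$ to dictate the coefficient in $A$; any looser accounting inflates $U_s$. Second, the recursion obtained from Lemma~\ref{lem:dec2} degenerates at $t=1$ because of the $1/(t(t-1))$ factor, so the transition from $\x_1$ to $\x_2$ must be handled separately by invoking Lemma~\ref{lem:dec1} directly and verifying that the resulting bound on $\EE[f(\x_2)-f(\x^*)]$ is already dominated by $U/2$ under the choice $U = U_s$; this is where the explicit constants $3dLG_u^2/(4N\mu^2)$ and $3LG_u^2\log(1/\delta)/(2N\mu^2)$ coming from $2B$ do the required work.
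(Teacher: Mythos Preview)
Your proposal is correct and follows essentially the same route as the paper. Both arguments combine Eq.~\eqref{eq:dec2} (from Lemma~\ref{lem:dec_2}; note your reference to ``Lemma~\ref{lem:dec2}'' is a label slip) with the high-probability bound of event $\cE_{t,6}$ to obtain the recursion $\Delta_{t+1} \le \frac{b}{t(t-1)}\sqrt{\sum_{i=2}^{t}(i-1)^2\Delta_i} + \frac{c}{t}$, and then close it to $\Delta_t \le U_s/t$. The only difference is packaging: the paper invokes Lemma~\ref{lem:dec2} (the Rakhlin--Shamir--Sridharan recursion lemma) as a black box yielding $a = \tfrac{9b^2}{4}+3c$, whereas you redo that induction by hand, arriving at $U_s = A^2 + 2B$ with the same numerical value. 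Your union-bound accounting for Eq.~\eqref{eq:P} is likewise the same as the paper's (and in fact slightly more careful, since you track $\cE_{t,2}$ and $\cE_{t,6}$ separately).
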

\begin{proof}
First, all conditions required in Lemma~\ref{lem:dec_2} are satisfied. 
Thus, we have
\begin{align*}
&\EE\left[f(\x_{t+1}) - f(\x^*)\right]\\
\stackrel{\eqref{eq:dec2}}{\leq}&
\frac{2}{N \mu t(t-1) }\sum_{i=2}^{t} (i-1) \EE\left[ \EE \left[ \nabla^\top f(\x_i) \EE\left[ \bZ_{\xi_i}^\perp \mid \cF_i \right] \mid \cF_{i-1} \right]   \right] \sum_{k\in\cK}\bb_{(k)}^i
+
\frac{  L \Cd G_u^2}{4 N \mu^2 t}\\
\stackrel{\eqref{eq:gauss_bnd}}{\leq}&
\frac{2}{N \mu t(t-1) }\sqrt{ 8L N G_u^2 \cdot \sum_{i=2}^{t} (i-1)^2 \EE\Big[f(\x_i) - f(\x^*)\Big] \cdot\log\frac{2}{\delta} }
+
\frac{  L \Cd G_u^2}{4 N \mu^2 t},
\end{align*}
where the last inequality is because of Lemma~\ref{lem:gauss_bnd} and event $\cE_{t,6}$.

With $b$ and $c$ being set as 
\begin{align*}
	b = 
	\frac{4G_u}{\sqrt{N}\mu} \sqrt{ 2 L \log\frac{2}{\delta}},\quad
	c = \frac{  L \Cd G^2}{4 N \mu^2 }, \quad\mbox{ and }\quad \Delta_t = \EE\Big[f(\x_t) - f(\x^*)\Big],
\end{align*}
 then by Lemma~\ref{lem:dec2}, we can obtain that for any $t\geq 2$, it holds that
\begin{align*}
\EE\left[f(\x_t) - f(\x^*)\right]
\leq& 
\frac{1}{t} \left( \frac{9b^2}{4} + 3c \right)
= 
\frac{1}{t}\left(
\frac{72 L G_u^2 \log\frac{2}{\delta}}{N \mu^2}
+
\frac{  3L \Cd G_u^2}{4 N \mu^2 }
\right)\\
=&
\frac{1}{t}\left(\frac{  3dL  G_u^2}{4 N \mu^2 } 
+ \frac{  3L G_u^2 \log\frac{1}{\delta}}{2 N \mu^2 }
+
\frac{72 L G_u^2 \log\frac{2}{\delta}}{N \mu^2}\right),
\end{align*}
where the last equality is because of the definition of $\Cd$ in Eq.~\eqref{eq:d_up}.
Above result holds for $t\ge 2$. 
For $t = 1$, we have
\begin{align*}
	\EE\left[f(\x_1) - f(\x^*)\right]
	\leq 
	\frac{f(\x_1) - f(\x^*)}{1}.
\end{align*}
Combining above results, we can the result in Eq.~\eqref{eq:main}.

Next, we will bound $\Pr(\cE_t)$ where $\cE_t$ is the event that  events $\cE_{t,1}$-$\cE_{t,6}$ all hold.
By a union bound, we have
\begin{align*}
\Pr(\cE_t) 
=& 
\Pr\left(\forall_i \mbox{ event } \cE_{t,i}\mbox{ holds}\right)\\
\geq&
1 - \left(\frac{N\delta}{2} + 2 \exp\left( -N \cdot K(1/4 \Vert p) \right) + \delta\right)\\
=&
1 - \left( \frac{(N+2)\delta}{2} +  2 \exp\left( -N \cdot K(1/4 \Vert p) \right)\right),
\end{align*}
where the inequality is because of probabilities of events $\cE_{t,1}$-$\cE_{t,6}$ shown in Lemma~\ref{lem:E1}-Lemma~\ref{lem:E5}, and Lemma~\ref{lem:gauss_bnd}.

Using the union bound again, we can obtain that
\begin{align*}
\Pr\left(\cap_{t=1}^T \cE_t\right) \geq 1 - T \left( \frac{(N+2)\delta}{2} +  2 \exp\left( -N \cdot K(1/4 \Vert p) \right)\right), \mbox{ with } p = 0.0224,
\end{align*}
which concludes the proof.
\end{proof}

\begin{theorem}\label{thm:main1}
	Assume that $f(\x)$ and each $f(\x;\;\xi)$ are $L$-smooth. 
	Assume that $f(\x)$ is low bounded, that is, $f(\x^*) \geq -\infty$.
	Let the stochastic gradient $\nabla f(\x;\;\xi)$ be of bounded second moment and its norm is lower bounded by $G_\ell$, that is, Assumption~\ref{ass:bsm} and Assumption~\ref{ass:lb} hold.. 
	Given $0<\delta<1$, let $\cE_t = \cap_{i=1,\dots,5,7} \cE_{t,i}$ be the event that events $\cE_{t,1}$-$\cE_{t,5}$ (defined in Eq.~\eqref{eq:E1} - Eq.~\eqref{eq:E5}) and $\cE_{t,7}$ (defined in Lemma~\ref{lem:gauss_bnd}) all hold. 
	Let events $\cE_t$ hold for $t = 1,\dots, T-1$. 
	Set the smooth parameter $\alpha$ satisfying $\alpha \leq \frac{G_\ell}{2\Cd L}$ and step size $\eta_t = \frac{\sqrt{N}\norm{\nabla f(\x_t;\;\xi_t)}}{\sqrt{T}} \cdot \frac{\sqrt{f(\x_1) - f(\x^*)}}{G_u}$.
	Then Algorithm~\ref{alg:SA1} has the following convergence property
\begin{equation}\label{eq:main1}
\frac{1}{T}\sum_{t=1}^{T} \EE\left[\norm{\nabla f(\x_t)}^2\right]
\leq
\frac{9\sqrt{(f(\x_1) - f(\x^*)) L\Cd} G_u}{\sqrt{T N}}
+ 
\frac{16G_u^2 \log\frac{2}{\delta}}{N T},
\end{equation}
where $\Cd$ is defined in Eq.~\eqref{eq:d_up}.
Furthermore,
\begin{equation}\label{eq:P1}
	\Pr\left(\cap_{t=1}^{T-1} \cE_t\right) \geq 1 - T \left( \frac{(N+2)\delta}{2} +  2 \exp\left( -N \cdot K(1/4 \Vert p) \right)\right), \mbox{ with } p = 0.0224.
\end{equation}
\end{theorem}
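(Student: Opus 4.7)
The plan is to combine the per-iteration nonconvex descent guarantee in Lemma~\ref{lem:dec4} with the high-probability control of the Gaussian cross term coming from event $\cE_{t,7}$, and then solve the resulting self-referential inequality for $\frac{1}{T}\sum_t \EE[\norm{\nabla f(\x_t)}^2]$. Since the conditions required by Lemma~\ref{lem:dec4} hold under the theorem's assumptions (namely $L$-smoothness, bounded second moment via Assumption~\ref{ass:bsm}, $\alpha \le G_\ell/(2\Cd L) \le \norm{\nabla f(\x_t;\;\xi_t)}/(2\Cd L)$ by Assumption~\ref{ass:lb}, and events $\cE_{t,1}$--$\cE_{t,5}$ on each of the first $T-1$ iterations), the conclusion~\eqref{eq:dec4} applies directly with $\heta_t = \heta := \frac{\sqrt{N}\sqrt{f(\x_1)-f(\x^*)}}{\sqrt{T}\,G_u}$.

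Let $S := \tfrac{1}{T}\sum_{t=1}^{T} \EE[\norm{\nabla f(\x_t)}^2]$ and use $\EE[f(\x_{T+1})] \ge f(\x^*)$. Substituting the bound from event $\cE_{t,7}$ (Eq.~\eqref{eq:gauss_bndn}) into the middle sum of Eq.~\eqref{eq:dec4}, I would obtain an inequality of the form
\[
S \;\le\; \frac{f(\x_1)-f(\x^*)}{2T\heta} \;+\; \frac{4G_u\sqrt{\log(2/\delta)}}{\sqrt{NT}}\sqrt{S} \;+\; \frac{2\heta L \Cd G_u^2}{N}.
\]
The key technical move is to handle the $\sqrt{S}$ term on the right-hand side by Young's inequality, $\frac{4G_u\sqrt{\log(2/\delta)}}{\sqrt{NT}}\sqrt{S} \le \tfrac{S}{2} + \frac{8G_u^2\log(2/\delta)}{NT}$; moving $S/2$ to the left and multiplying by $2$ yields
\[
S \;\le\; \frac{f(\x_1)-f(\x^*)}{T\heta} \;+\; \frac{16G_u^2 \log(2/\delta)}{NT} \;+\; \frac{4\heta L\Cd G_u^2}{N}.
\]
Now plugging in the chosen step-size parameter $\heta$ balances the first and third terms in the usual way, producing the leading rate $O\bigl(G_u\sqrt{L\Cd(f(\x_1)-f(\x^*))/(TN)}\bigr)$; the numerical constant is absorbed into the $9$ appearing in Eq.~\eqref{eq:main1}.

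For the probability bound~\eqref{eq:P1}, I would follow the same union-bound argument used at the end of the proof of Theorem~\ref{thm:main}: each $\cE_{t,i}$ for $i=1,\dots,5$ fails with probability at most $N\delta/2$, $\delta$, $0$, $0$, or $2\exp(-N K(1/4\Vert p))$ as quantified by Lemmas~\ref{lem:E1}--\ref{lem:E5}, while $\cE_{t,7}$ fails with probability at most $\delta$; summing these and then applying a second union bound over $t=1,\dots,T-1$ yields exactly~\eqref{eq:P1}. The main obstacle I anticipate is purely algebraic: carefully tracking that the Young's-inequality absorption produces the $16G_u^2\log(2/\delta)/(NT)$ term with the constant stated, and verifying that the leading-order balance really matches the claimed $9\sqrt{(f(\x_1)-f(\x^*))L\Cd}\,G_u/\sqrt{TN}$ rate given the non-optimal but scale-correct choice of $\heta$ in the theorem; neither difficulty is conceptual.
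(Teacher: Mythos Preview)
Your proposal is correct and follows essentially the same route as the paper: start from Lemma~\ref{lem:dec4}, insert the bound from event $\cE_{t,7}$ (Eq.~\eqref{eq:gauss_bndn}), absorb the $\sqrt{S}$ term via $ab \le \tfrac{a^2}{2}+\tfrac{b^2}{2}$ to get the $\tfrac{16G_u^2\log(2/\delta)}{NT}$ contribution, and then balance the two remaining terms by the choice of $\heta$; the probability bound is exactly the union-bound argument you describe. Your anticipated ``obstacle'' is real but harmless: the paper's own computation in fact uses $\heta = \sqrt{N/(TL\Cd)}\cdot\sqrt{f(\x_1)-f(\x^*)}/G_u$ (with the $\sqrt{L\Cd}$ factor) rather than the expression literally stated in the theorem, which is what makes the $1+8=9$ constant come out cleanly.
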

\begin{proof}
First, all conditions required in Lemma~\ref{lem:dec4} are satisfied. 
Thus,  we can obtain that
\begin{align*}
	&\frac{1}{T}\sum_{t=1}^{T} \EE\left[\norm{\nabla f(\x_t)}^2\right]\\
	\stackrel{\eqref{eq:dec4}}{\leq}& 
	\frac{f(\x_1) - \EE\left[f(\x_{T+1})\right]}{2T\heta}
	+ 
	\sum_{t=1}^{T}\frac{2  \EE\left[ \EE\left[ \nabla^\top f(\x_t)\EE\left[ \tbZ_{\xi_t}^\perp \mid \cF_t \right] \mid \cF_{t-1} \right]\right]}{NT} \sum_{k\in\cK}\bb_{(k)}^t\\
	&
	+
	\frac{2 \heta L \Cd G_u^2}{N}\\
	\stackrel{\eqref{eq:gauss_bndn}}{\leq}&
	\frac{f(\x_1) - \EE\left[f(\x_{T+1})\right]}{2T\heta}
	+ 
	\frac{2  \sqrt{ 4 NG_u^2   \log\frac{2}{\delta} \cdot \sum_{t=1}^{T} \EE\left[\norm{\nabla f(\x_t)}^2\right]}}{NT} 
	+
	\frac{2 \heta L \Cd G_u^2}{N}\\
	\leq&
	\frac{f(\x_1) - \EE\left[f(\x_{T+1})\right]}{2T\heta}
	+ 
	\frac{\sum_{t=1}^{T} \EE\left[\norm{\nabla f(\x_t)}^2\right]}{2T}
	+ 
	\frac{8G_u^2 \log\frac{2}{\delta}}{N T} 
	+
	\frac{2 \heta L \Cd G_u^2}{N},
\end{align*}
where the last inequality is because of the fact $\sqrt{ab} \leq \frac{a+b}{2}$ holds for all $a, b \geq 0$.

Thus, we can obtain that
\begin{align*}
	\frac{1}{T}\sum_{t=1}^{T} \EE\left[\norm{\nabla f(\x_t)}^2\right]
	\leq& 
	\frac{f(\x_1) - \EE\left[f(\x_{T+1})\right]}{T\heta}
	+ 
	\frac{16G_u^2 \log\frac{2}{\delta}}{N T} 
	+
	\frac{8 \heta L \Cd G_u^2}{N}\\
	\leq&
	\frac{f(\x_1) - f(\x^*)}{T\heta}
	+ 
	\frac{16G_u^2 \log\frac{2}{\delta}}{N T} 
	+
	\frac{8 \heta L \Cd G_u^2}{N}\\
	=&
	\frac{9\sqrt{(f(\x_1) - f(\x^*)) L\Cd} G_u}{\sqrt{T N}}
	+ 
	\frac{16G_u^2 \log\frac{2}{\delta}}{N T},
\end{align*}
where the second inequality is because of $\EE\left[f(\x_{T+1})\right] \geq f(\x^*)$ and last equality is because of $\heta = \sqrt{\frac{N}{T L \Cd}} \cdot \frac{\sqrt{f(\x_1) - f(\x^*)}}{G_u}$.
\end{proof}

\begin{remark}
Our convergence analysis requires Assumption~\ref{ass:lb}, that is, $\norm{\nabla f(\x;\;,\xi)}\geq G_\ell$. 
This assumption is not common in the convergence analysis of stochastic gradient descent.
This assumption is only used to choose the smooth parameter $0<\alpha \leq\frac{G_\ell}{2\Cd L}$ of Algorithm~\ref{alg:SA1}.
Thanks to the well-chosen $\alpha$, our convergence rates do \emph{not} explicitly depend on $\alpha$.  

Though Assumption~\ref{ass:lb} is not common in the convergence analysis of stochastic gradient descent, this assumption  is also reasonable because the stochastic gradient  $\nabla f(\x;\;\xi)$ never vanishes even $\x_t$ tends to the optimal point \citep{bottou2018optimization,moulines2011non}.
\end{remark}

Theorem~\ref{thm:main}  provides the convergence rate of Algorithm~\ref{alg:SA1} when $f(\x)$ is both $L$-smooth and $\mu$-strongly convex.
Theorem~\ref{thm:main1} provides the convergence rate when $f(\x)$ is only $L$-smooth.
In the following two corollaries, we will provide the iteration complexities and query complexities of Algorithm~\ref{alg:SA1}.

\begin{corollary}\label{cor:main}
Assume that $f(\x)$ is $L$-smooth and $\mu$-strongly convex. 
Each $f(\x;\;\xi)$ is also $L$-smooth. 
All conditions in Theorem~\ref{thm:main} are all satisfied and parameters of Algorithm~\ref{alg:SA1} are set as Theorem~\ref{thm:main}.
Set sample size $N = (K(1/4 \Vert p))^{-1} \log (40T)$ with $p = 0.0224$ and the probability parameter $\delta = \frac{1}{20 NT}$. 
Furthermore, assume that the dimension is sufficiently large that $d \geq \log(40 N T) $.   
Then, with a probability at least $0.8$, Algorithm~\ref{alg:SA1} can find a $\x_T$ satisfies
\begin{equation}
	f(\x_T) - f(\x^*) \leq \varepsilon,
\end{equation}
with the iteration number $T$ satisfies
\begin{equation}\label{eq:T}
	T 
	=   
	\frac{10LG_u^2}{N\mu^2 \varepsilon} 
	\cdot 
	\left(\frac{  3d  }{4   } 
	+ 
	\frac{  3  \log\frac{1}{\delta}}{2   }
	+
	72\log\frac{2}{\delta}\right)
	+
	\frac{10(f(\x_1) - f(\x^*))}{\varepsilon}.
\end{equation} 
The total queries are
\begin{equation}\label{eq:Q1}
	Q = \frac{750 d LG_u^2}{\mu^2 \varepsilon}
	+ \frac{10(f(\x_1) - f(\x^*)) \log\left( \max\{\frac{75 d LG_u^2}{\mu^2 \varepsilon}, f(\x_1) - f(\x^*)  \} \right)}{K(1/4 \Vert p) \cdot \varepsilon}.
\end{equation}
\end{corollary}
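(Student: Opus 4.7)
}

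The plan is to combine the expectation-based bound in Theorem~\ref{thm:main} with Markov's inequality to obtain a high-probability guarantee, then account for the probability that the good events fail, and finally translate the iteration count into a query count. I would first observe that Theorem~\ref{thm:main} gives $\EE[f(\x_T)-f(\x^*)] \leq \max\{U_s,\, f(\x_1)-f(\x^*)\}/T$ on the joint event $\cap_{t=1}^{T-1}\cE_t$. Choosing $T$ large enough that this conditional expectation is at most $\varepsilon/10$ and then applying Markov's inequality yields
\[
\Pr\bigl(f(\x_T)-f(\x^*)>\varepsilon \,\big|\, \cap_{t=1}^{T-1}\cE_t\bigr) \leq \tfrac{1}{10}.
\]
Using the elementary bound $\max\{a,b\}\leq a+b$ and substituting the definition of $U_s$ then produces exactly the closed-form Eq.~\eqref{eq:T} (the factor $10$ is the Markov slack).

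Next I would verify that the choices $N = K(1/4\Vert p)^{-1}\log(40T)$ and $\delta = 1/(20NT)$ make the failure probability of the events small enough. By Eq.~\eqref{eq:P}, the union-bound failure probability is
\[
T\left(\frac{(N+2)\delta}{2} + 2\exp\!\bigl(-N\cdot K(1/4\Vert p)\bigr)\right).
\]
The exponential term simplifies to $1/(40T)$, contributing $1/20$ after multiplication by $T$ and the factor $2$. The $\delta$-term simplifies to $(N+2)/(40N)\leq 1/20$ for $N\geq 2$. Summing these with the $1/10$ Markov slack gives a total failure probability of at most $1/5$, i.e.\ success probability at least $0.8$.

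Finally, to obtain the query complexity $Q = NT$ in Eq.~\eqref{eq:Q1}, I would multiply Eq.~\eqref{eq:T} by $N$. The $1/N$ in the first summand cancels, leaving the bracket $\bigl(3d/4 + \tfrac{3}{2}\log(1/\delta) + 72\log(2/\delta)\bigr)$ multiplied by $10LG_u^2/(\mu^2\varepsilon)$. Under the hypothesis $d\geq \log(40NT)$, both $\log(1/\delta)$ and $\log(2/\delta)$ are bounded by $d$ (since $\delta = 1/(20NT)$ implies $\log(2/\delta)=\log(40NT)\leq d$), so the bracket is bounded by $75d$, producing the first term $750\,dLG_u^2/(\mu^2\varepsilon)$. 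The additive contribution $10N(f(\x_1)-f(\x^*))/\varepsilon$ then becomes the logarithmic term, since $N = K(1/4\Vert p)^{-1}\log(40T)$ and $T$ is crudely bounded by a constant multiple of $\max\{75dLG_u^2/(\mu^2\varepsilon),\, f(\x_1)-f(\x^*)\}$ using Eq.~\eqref{eq:T} itself.

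The main obstacle I expect is the mild circular dependence between $N$ and $T$: $N$ is defined via $\log T$, while $T$ depends on $N$ through $1/N$ inside $U_s$. Rather than solving the fixed point exactly, I would exploit the hypothesis $d\geq \log(40NT)$ to absorb all logarithmic factors into $O(d)$, at the cost of a slightly loose explicit constant. The only other bookkeeping subtlety is keeping the three distinct failure sources — the $\delta$-term, the KL-exponential term, and the Markov slack — neatly separated so that their sum lands at $0.2$ rather than a larger figure.
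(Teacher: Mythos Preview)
Your proposal is correct and follows essentially the same approach as the paper: Markov's inequality on the conditional expectation from Theorem~\ref{thm:main}, verification that the stated $N,\delta$ make $\Pr(\cap_t\cE_t)\ge 9/10$, and then multiplication by $N$ together with $d\ge\log(40NT)$ to collapse the logarithms into the $75d$ constant. The only cosmetic difference is that you combine the two $1/10$ failure sources additively to reach $0.8$, whereas the paper multiplies $(1-1/10)^2\ge 0.8$; both are valid and land on the same bound.
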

\begin{proof}
Conditioned on events $\cE_t$ hold for $t = 1,\dots, T-1$, Eq.~\eqref{eq:main} holds.
By Markov's inequality, we can obtain that
\begin{equation}\label{eq:PP}
\Pr\left( f(\x_T) - f(\x^*) > \varepsilon \mid  \cap_{t=1}^{T-1} \cE_t\right) 
\leq
\frac{\EE\left[ f(\x_T) - f(\x^*) \right]}{\varepsilon} 
\stackrel{\eqref{eq:main}}{\leq}
\frac{\max\{U_s, f(\x_1) - f(\x^*)\}}{T\varepsilon}
=
\frac{1}{10}.
\end{equation}
Therefore, we only need 
\begin{align}
	T 
	=& 
	\frac{10 \max\{U_s, f(\x_1) - f(\x^*)\}}{\varepsilon} \notag \\
	\leq&
	\frac{10LG_u^2}{N\mu^2 \varepsilon} 
	\cdot 
	\left(\frac{  3d  }{4   } 
	+ 
	\frac{  3  \log\frac{1}{\delta}}{2   }
	+
	72\log\frac{2}{\delta}\right)
	+
	\frac{10(f(\x_1) - f(\x^*))}{\varepsilon}. \label{eq:T1}
\end{align}

At the same time, we require that $\Pr\left(\cap_{t=1}^{T-1} \cE_t\right) \geq 1 - \frac{1}{10}$.
Eq.~\eqref{eq:P} implies that we require that
\begin{align*}
T \left( \frac{(N+2)\delta}{2} +  2 \exp\left( -N \cdot K(1/4 \Vert p) \right)\right) 
\leq& 
\frac{1}{10}.
\end{align*}
Replacing $\delta = \frac{1}{20NT}$ and  $N = (K(1/4 \Vert p))^{-1} \log (40T)$ to the left-hand side of the above equation, we can obtain that
\begin{align*}
	T \left( \frac{(N+2)\delta}{2} +  2 \exp\left( -N \cdot K(1/4 \Vert p) \right)\right)
	\leq \frac{1}{20}
	+ 2 T \cdot \exp\left( \log - (40 T) \right)
	=
	\frac{1}{10}.
\end{align*}
Thus, it holds that $\Pr\left(\cap_{t=1}^{T-1} \cE_t\right) \geq 1 - \frac{1}{10}$.
Combining with Eq.~\eqref{eq:PP} and the law of total probability, we have
\begin{align*}
\Pr\left( f(\x_T) - f(\x^*) \leq \varepsilon \right)
=& 
\Pr\left( f(\x_T) - f(\x^*) \leq \varepsilon \mid  \cap_{t=1}^{T-1} \cE_t \right) 
\cdot 
\Pr\left(\cap_{t=1}^{T-1} \cE_t\right)\\
&+
\Pr\left( f(\x_T) - f(\x^*) \leq \varepsilon \mid  \overline{\cap_{t=1}^{T-1} \cE_t} \right) 
\cdot 
\Pr\left(\overline{\cap_{t=1}^{T-1} \cE_t}\right)\\
\geq&
\Pr\left( f(\x_T) - f(\x^*) \leq \varepsilon \mid  \cap_{t=1}^{T-1} \cE_t \right) 
\cdot 
\Pr\left(\cap_{t=1}^{T-1} \cE_t\right)\\
=& \left(1 - \frac{1}{10}\right)^2
\geq 0.8,
\end{align*}
where $\overline{\cap_{t=1}^{T-1} \cE_t}$ is the complement of $\cap_{t=1}^{T-1} \cE_t$.

The total queries are 
\begin{align*}
	Q 
	=& T N 
	\stackrel{\eqref{eq:T1}}{\leq} 
	\frac{10LG_u^2}{\mu^2 \varepsilon} 
	\cdot 
	\left(\frac{  3d  }{4   } 
	+ 
	\frac{  3  \log\frac{1}{\delta}}{2   }
	+
	72\log\frac{2}{\delta}\right)
	+
	\frac{10N(f(\x_1) - f(\x^*))}{\varepsilon}\\
	\leq&
	\frac{750 d LG_u^2}{\mu^2 \varepsilon}
	+ \frac{10(f(\x_1) - f(\x^*)) \log\left( \max\{\frac{75 d LG_u^2}{\mu^2 \varepsilon}, f(\x_1) - f(\x^*)  \} \right)}{K(1/4 \Vert p) \cdot \varepsilon},
\end{align*}
where the last inequality is because of the assumption that $d \geq \log(40 N T) $ and $\delta = \frac{1}{20NT}$.

\end{proof}

\begin{corollary}\label{cor:main1}
Assume that $f(\x)$ and each $f(\x;\;\xi)$ are $L$-smooth. 
All conditions in Theorem~\ref{thm:main1} are all satisfied and parameters of Algorithm~\ref{alg:SA1} are set as Theorem~\ref{thm:main1}.
Set sample size $N = (K(1/4 \Vert p))^{-1} \log (40T)$ with $p = 0.0224$ and the probability parameter $\delta = \frac{1}{20 NT}$. 
Furthermore, assume that the dimension is sufficiently large that $d \geq \log(40 N T) $.   
Then, with a probability at least $0.8$, the sequence $\{\x_t\}$ with $t=1,\dots, T$ generated by Algorithm~\ref{alg:SA1} satisfies
\begin{equation}
\frac{1}{T}\sum_{t=1}^{T}\norm{\nabla f(\x_t)}^2  \leq \varepsilon,
\end{equation}
with $0<\varepsilon<1$, if the iteration number $T$ satisfies
\begin{equation}\label{eq:T2}
T = \frac{ 180^2 (f(\x_1) - f(\x^*)) d L  G_u^2}{N \varepsilon^2} 
+
\frac{2\cdot 180^2 (f(\x_1) - f(\x^*)) L  G_u^2\log\frac{1}{\delta}}{N \varepsilon^2} 
+ 
\frac{320 G_u^2 \log\frac{2}{\delta}}{N\varepsilon}.
\end{equation}
Furthermore, the total queries are
\begin{equation}\label{eq:Q2}
Q = \frac{ 3\cdot 180^2 (f(\x_1) - f(\x^*)) d L  G_u^2}{ \varepsilon^2} 
+
\frac{320 d G_u^2 }{\varepsilon}.
\end{equation}
\end{corollary}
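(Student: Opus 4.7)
The plan is to mirror the proof of Corollary~\ref{cor:main}, but applied to the expectation bound of Theorem~\ref{thm:main1} rather than Theorem~\ref{thm:main}. The argument has three pieces: (i) choose $T$ so that the conditional expectation from Theorem~\ref{thm:main1} is at most $\varepsilon/10$, (ii) use Markov's inequality to convert this into a $9/10$ conditional probability statement, and (iii) reuse the event-probability calculation from Corollary~\ref{cor:main} (since Eq.~\eqref{eq:P1} has the same form as Eq.~\eqref{eq:P}) to get $\Pr(\cap_t\cE_t) \ge 9/10$. Combining via the law of total probability gives the required $0.8$ success probability.

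Concretely, conditioned on $\cap_{t=1}^{T-1}\cE_t$, Theorem~\ref{thm:main1} bounds the expected average squared gradient norm by a sum of two terms. I would set each term to be at most $\varepsilon/20$: the first inequality $\frac{9\sqrt{(f(\x_1)-f(\x^*))L\Cd}\,G_u}{\sqrt{TN}}\le\varepsilon/20$ is equivalent to $T\ge \frac{180^2(f(\x_1)-f(\x^*))L\Cd G_u^2}{N\varepsilon^2}$, and using $\Cd = d + 2\log(1/\delta)$ from Eq.~\eqref{eq:d_up} splits it into the first two summands of Eq.~\eqref{eq:T2}. The second inequality $\frac{16 G_u^2\log(2/\delta)}{NT}\le\varepsilon/20$ produces the third summand, $T\ge \frac{320 G_u^2\log(2/\delta)}{N\varepsilon}$. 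Summing the three lower bounds yields Eq.~\eqref{eq:T2}, after which Markov's inequality gives
\[
\Pr\Big(\tfrac{1}{T}\sum_{t=1}^T\norm{\nabla f(\x_t)}^2 > \varepsilon \,\Big|\, \cap_{t=1}^{T-1}\cE_t\Big)\le \tfrac{1}{10}.
\]

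For the event probability, the choices $N = (K(1/4\|p))^{-1}\log(40T)$ and $\delta = \frac{1}{20NT}$ make $\tfrac{(N+2)\delta}{2}\le \tfrac{1}{20}$ and $2T\exp(-N\cdot K(1/4\|p)) = \tfrac{1}{20}$, so Eq.~\eqref{eq:P1} gives $\Pr(\cap_{t=1}^{T-1}\cE_t)\ge 9/10$; this calculation is identical to the one in Corollary~\ref{cor:main}. Then the law of total probability yields the final guarantee $\Pr(\tfrac{1}{T}\sum\norm{\nabla f(\x_t)}^2\le \varepsilon)\ge (9/10)^2\ge 0.8$.

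Finally, the total query count is $Q = TN$, and multiplying Eq.~\eqref{eq:T2} by $N$ cancels the $N$ in the denominators, producing $\frac{180^2(f(\x_1)-f(\x^*))L(d+2\log(1/\delta))G_u^2}{\varepsilon^2} + \frac{320 G_u^2\log(2/\delta)}{\varepsilon}$. The main obstacle is verifying that this collapses to Eq.~\eqref{eq:Q2}: since $\delta^{-1}=20NT$, the quantity $\log(1/\delta)=\log(20NT)\le \log(40NT)\le d$ by the hypothesis $d\ge\log(40NT)$, so $2\log(1/\delta)\le 2d$, giving the factor $1+2=3$ in front of $180^2(f(\x_1)-f(\x^*))dLG_u^2/\varepsilon^2$; similarly $\log(2/\delta)\le d$ replaces the $\log(2/\delta)$ in the second term by $d$. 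A subtle point here is that the definition of $\delta$ depends on $T$ and $N$, and $N$ in turn depends on $T$, so the bounds are implicit; the assumption $d\ge\log(40NT)$ is exactly what is needed to decouple this circular dependence and absorb the logarithmic factors into the dimension, yielding Eq.~\eqref{eq:Q2}.
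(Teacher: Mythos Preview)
Your proposal is correct and follows essentially the same route as the paper's proof: split the bound from Theorem~\ref{thm:main1} into two pieces each at most $\varepsilon/20$ to derive Eq.~\eqref{eq:T2}, apply Markov's inequality for the conditional statement, reuse the event-probability calculation verbatim from Corollary~\ref{cor:main} via Eq.~\eqref{eq:P1}, and combine via total probability; then compute $Q=TN$ and absorb the logarithmic terms using $d\ge\log(40NT)$. Your explicit justification that $\log(1/\delta)=\log(20NT)\le\log(40NT)\le d$ and $\log(2/\delta)=\log(40NT)\le d$ is slightly more detailed than the paper's one-line citation of the assumption, but the argument is the same.
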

\begin{proof}
Conditioned on events $\cE_t$ hold for $t = 1,\dots, T-1$, Eq.~\eqref{eq:main1} holds.	
By  Markov's inequality, we can obtain that
\begin{equation}\label{eq:P2}
\Pr\left( \frac{1}{T}\sum_{t=1}^{T}\norm{\nabla f(\x_t)}^2  \geq \varepsilon \mid \cap_{t=1}^{T-1} \cE_t \right)
\leq 
\frac{\frac{1}{T}\sum_{t=1}^{T}\EE\left[\norm{\nabla f(\x_t)}^2\right]}{\varepsilon} = \frac{1}{10}.
\end{equation}
Then, by Eq.~\eqref{eq:main1}, we only need that
\begin{align*}
	\frac{9\sqrt{(f(\x_1) - f(\x^*)) L\Cd} G_u}{\sqrt{T N}}
	+ 
	\frac{16G_u^2 \log\frac{2}{\delta}}{N T} 
	=
	\frac{\varepsilon}{20} + \frac{\varepsilon}{20} = \frac{\varepsilon}{10}.
\end{align*}
Therefore, $T$ only needs 
\begin{align*}
	T 
	\geq& 
	\frac{180^2 (f(\x_1) - f(\x^*)) L\Cd G_u^2}{N \varepsilon^2} 
	+ 
	\frac{320 G_u^2 \log\frac{2}{\delta}}{N\varepsilon}\\
	=&
	\frac{ 180^2 (f(\x_1) - f(\x^*)) d L  G_u^2}{N \varepsilon^2} 
	+
	\frac{2\cdot 180^2 (f(\x_1) - f(\x^*)) L  G_u^2\log\frac{1}{\delta}}{N \varepsilon^2} 
	+ 
	\frac{320 G_u^2 \log\frac{2}{\delta}}{N\varepsilon}. 
\end{align*}

Furthermore, Eq.~\eqref{eq:P1} implies that we require  $\Pr\left(\cap_{t=1}^{T-1} \cE_t\right) \geq 1 - \frac{1}{10}$.
Accordingly,
\begin{align*}
	T \left( \frac{(N+2)\delta}{2} +  2 \exp\left( -N \cdot K(1/4 \Vert p) \right)\right) 
	\leq& 
	\frac{1}{10}.
\end{align*}
Replacing $\delta = \frac{1}{20NT}$ and  $N = (K(1/4 \Vert p))^{-1} \log (40T)$ to the left hand of above equation, we can obtain that
\begin{align*}
	T \left( \frac{(N+2)\delta}{2} +  2 \exp\left( -N \cdot K(1/4 \Vert p) \right)\right)
	\leq \frac{1}{20}
	+ 2 T \cdot \exp\left( \log - (40 T) \right)
	=
	\frac{1}{10}.
\end{align*}
Thus, it holds that $\Pr\left(\cap_{t=1}^{T-1} \cE_t\right) \geq 1 - \frac{1}{10}$.

Combining with Eq.~\eqref{eq:P2} and the law of total probability, we have
\begin{align*}
\Pr\left( \frac{1}{T}\sum_{t=1}^{T}\norm{\nabla f(\x_t)}^2  \leq \varepsilon \right)
=&
\Pr\left( \frac{1}{T}\sum_{t=1}^{T}\norm{\nabla f(\x_t)}^2  \leq \varepsilon \mid \cap_{t=1}^{T-1} \cE_t \right) 
\cdot 
\Pr\left(\cap_{t=1}^{T-1} \cE_t\right)\\
&
+
\Pr\left( \frac{1}{T}\sum_{t=1}^{T}\norm{\nabla f(\x_t)}^2  \leq \varepsilon \mid \overline{\cap_{t=1}^{T-1} \cE_t} \right) 
\cdot 
\Pr\left(\overline{\cap_{t=1}^{T-1} \cE_t}\right)\\
\geq&
\Pr\left( \frac{1}{T}\sum_{t=1}^{T}\norm{\nabla f(\x_t)}^2  \leq \varepsilon \mid \cap_{t=1}^{T-1} \cE_t \right) 
\cdot 
\Pr\left(\cap_{t=1}^{T-1} \cE_t\right)\\
\geq&
\left(1 - \frac{1}{10}\right)^2
\geq 0.8,
\end{align*}
where $\overline{\cap_{t=1}^{T-1} \cE_t}$ is the complement of $\cap_{t=1}^{T-1} \cE_t$.

The total queries are 
\begin{align*}
	Q 
	= TN
	\stackrel{\eqref{eq:T2}}{=}&
	\frac{ 180^2 (f(\x_1) - f(\x^*)) d L  G_u^2}{ \varepsilon^2} 
	+
	\frac{2\cdot 180^2 (f(\x_1) - f(\x^*)) L  G_u^2\log\frac{1}{\delta}}{ \varepsilon^2} 
	+ 
	\frac{320 G_u^2 \log\frac{2}{\delta}}{\varepsilon}  \\
	\leq&
	\frac{ 3\cdot 180^2 (f(\x_1) - f(\x^*)) d L  G_u^2}{ \varepsilon^2} 
	+
	\frac{320 d G_u^2 }{\varepsilon},
\end{align*}
where the last inequality is because of the assumption that $d \geq \log(40 N T) $ and $\delta = \frac{1}{20NT}$.
\end{proof}

\begin{remark}
Both Eq.~\eqref{eq:T} and Eq.~\eqref{eq:T2} show that a large sample size $N$ will reduce the iteration number. 
However, the sample size $N$ will \emph{not} affect the total query complexities just as implied by Eq.~\eqref{eq:Q1} and Eq.~\eqref{eq:Q2}.
\end{remark}

\begin{remark}
If $f(\x)$ is $L$-smoothness and $\mu$-strong convex,  Corollary~\ref{cor:main} shows that Algorithm~\ref{alg:SA1} can achieve a query complexity $\cO\left( \frac{dLG_u^2}{\mu^2\varepsilon}\right)$ to find a $\x_T$ satisfying $f(\x_T) - f(\x^*) \leq \varepsilon$. 
This query complexity is the same as that of the value-based ZO algorithm \citep{wangadvancement}.

If the objective function may be nonconvex, to find an $\varepsilon$-stationary point, that is $\frac{1}{T}\sum_{t=1}^{T}\norm{\nabla f(\x_t)}^2  \leq \varepsilon$, the query complexity of our algorithm is $\cO\left(\frac{dLG_u^2}{\varepsilon^2}\right)$. 
This complexity is the same as that of the valued-based ZO algorithm \citep{ghadimi2013stochastic}.

To the best of the author's knowledge, these query complexities are first obtained for rank-based ZO algorithms over stochastic functions. 
\end{remark}

\section{Conclusion}

This paper studied rank-based zeroth-order optimization for stochastic objectives,  motivated by machine learning, reinforcement learning, and learning systems that rely on ordinal feedback such as human preferences. We proposed a simple and computationally efficient rank-based ZO algorithm that operates purely on ranking information and avoids the quadratic-time graph constructions required by prior work.

Under standard assumptions including smoothness, strong convexity, and bounded second moments of stochastic gradients, we established explicit non-asymptotic query complexity bounds for both convex and nonconvex settings. These bounds match those of value-based zeroth-order methods, demonstrating that ordinal feedback alone is sufficient to achieve optimal query efficiency in stochastic optimization. Our analysis departs from existing drift-based and information-geometric techniques and provides tools that may extend to other rank-based algorithms.
\pb
\clearpage

\appendix

\section{Useful Lemmas}

\begin{lemma}[Lemma 15 of \citet{ye2025explicit}]\label{lem:low1}
	Let \(X_1,\dots,X_N\) be i.i.d.\ standard Gaussian,
	\(m=\lfloor N/4\rfloor\), and \(M:=X_{(N-m+1)}\), where $X_{(N-m+1)}$ means the $(N-m+1)$-th smallest in $X_i$'s.
	Given a fixed value $\tau$ and denoting $p=1-\Phi(\tau)$,   if \(p< q :=\tfrac{m}{N}\), then the Chernoff bound yields the exponential lower bound
	\begin{equation}\label{eq:chernoff-lower}
		\Pr(M>\tau)
		\;\ge\; 1-\exp\!\big(-N\,K(q \Vert p)\big),
	\end{equation}
	where the binary Kullback--Leibler divergence is
	\begin{equation}\label{eq:KL}
		K(q \Vert p) \;=\; q\ln\frac{q}{p}+(1-q)\ln\frac{1-q}{1-p}.
	\end{equation}
	In particular, for \(\tau=2\) we have \(p\approx 0.0224 < 1/4\) and hence (with \(q= 1/4\))
	\[
	\Pr(M>2) \;\ge\; 1-\exp\big(-N\,K(1/4\Vert p)\big).
	\]
\end{lemma}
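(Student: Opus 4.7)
The plan is to reduce the order-statistic event to a Binomial tail and then invoke the Cram\'er--Chernoff inequality. First, I would introduce the counting variable
\begin{equation*}
Y \;:=\; \sum_{i=1}^N \mathbf{1}\{X_i > \tau\} \;\sim\; \text{Binomial}(N, p), \qquad p = 1 - \Phi(\tau).
\end{equation*}
Since $M = X_{(N-m+1)}$ is, by construction, the $m$-th largest of the $X_i$, the event $\{M > \tau\}$ is exactly the event that at least $m$ of the samples exceed $\tau$, i.e.\ $\{M > \tau\} = \{Y \geq m\} = \{Y/N \geq q\}$ with $q = m/N$. This order-statistic-to-Binomial rewriting is the only nonroutine step; everything else reduces to a classical concentration inequality for Bernoulli sums.

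Next, I would invoke the KL-form Chernoff bound for $Y$. The standard derivation uses the exponential tilt $\EE[e^{\lambda Y}] = (1 - p + p e^\lambda)^N$, optimizes over $\lambda \in \RR$, and identifies the resulting Legendre dual of the Bernoulli log-MGF with the binary Kullback--Leibler divergence $K(q \Vert p)$. This produces an exponential tail bound with exponent $-N\,K(q \Vert p)$, and passing to the complementary event yields the claimed estimate
\begin{equation*}
\Pr(M > \tau) \;\geq\; 1 - \exp\!\big(-N\, K(q \Vert p)\big).
\end{equation*}

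Finally, the numerical specialization $\tau = 2$ requires only recalling that $p = 1 - \Phi(2) \approx 0.0224$ from the standard normal table, verifying $p < 1/4$, and substituting into the general inequality to get $\Pr(M > 2) \geq 1 - \exp(-N\, K(1/4 \Vert p))$ as stated.

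I do not anticipate any substantive obstacle; the argument is a two-line calculation once the order-statistic event has been recast as a Binomial tail. The one place where care is needed is bookkeeping the direction of the Chernoff bound (upper vs.\ lower tail) so that the complementation is aligned with the hypothesis $p < q$ and the resulting exponent carries the binary KL rate $K(q \Vert p)$ in the correct orientation.
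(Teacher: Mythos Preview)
Your reduction $\{M>\tau\}=\{Y\ge m\}$ with $Y\sim\mathrm{Binomial}(N,p)$ is correct, and the paper itself offers no proof of this lemma (it is quoted verbatim from \citet{ye2025explicit}).  However, the Chernoff step you sketch runs in the wrong direction under the stated hypothesis $p<q$.  Since $\EE[Y/N]=p<q$, the event $\{Y/N\ge q\}$ is an \emph{upper}-tail deviation, and the Cram\'er--Chernoff inequality yields
\[
\Pr(M>\tau)=\Pr(Y\ge m)=\Pr(Y/N\ge q)\;\le\;\exp\!\big(-N\,K(q\Vert p)\big),
\]
the opposite of the claimed bound.  Passing to the complement does not rescue the argument: it gives $\Pr(Y<m)\ge 1-\exp(-N\,K(q\Vert p))$, again the reverse of what is needed.

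In fact the lemma as written cannot hold.  For $\tau=2$ and $m=\lfloor N/4\rfloor$, the order statistic $M=X_{(N-m+1)}$ concentrates near the $3/4$-quantile of the standard normal, $\Phi^{-1}(3/4)\approx 0.674$, so $\Pr(M>2)\to 0$ exponentially fast --- precisely what the upper-tail Chernoff bound delivers.  The lower bound $\Pr(M>\tau)\ge 1-\exp(-N\,K(q\Vert p))$ would follow from your argument if the hypothesis were reversed to $p>q$ (equivalently $\tau<\Phi^{-1}(1-q)$), since then $\{Y<m\}$ becomes a genuine lower-tail event and Chernoff gives $\Pr(Y\le Nq)\le\exp(-N\,K(q\Vert p))$.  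Your instinct that ``the one place where care is needed is bookkeeping the direction of the Chernoff bound'' was exactly right; working it out carefully shows the stated hypothesis and conclusion are incompatible, so the issue lies in the lemma statement itself rather than in your approach.
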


\begin{lemma}[Lemma 17 of \citet{ye2025explicit}]\label{lem:low2}
	Let \(X_1,\dots,X_N\) be i.i.d.\ standard Gaussian,
	\(m=\lfloor N/4\rfloor\), and \(M:=X_{(m)}\), where $X_{(m)}$ means the $m$-th smallest in $X_i$'s.
	Given a fixed value $\tau$, and denoting  $p=\Phi(\tau)$, if \(p < q :=\tfrac{m}{N} \), then the Chernoff bound yields the exponential lower bound
	\begin{equation}\label{eq:chernoff-lower1}
		\Pr(M< \tau )
		\;\ge\; 1-\exp\!\big(-N\,K(q \Vert p)\big),
	\end{equation}
	where the binary Kullback--Leibler divergence $K(q \Vert p)$ is defined in Eq.~\eqref{eq:KL}.
	In particular, for \(\tau=-2\) we have \(p\approx 0.0224 <1/4 \) and hence 
	\[
	\Pr(M<-2) \;\ge\; 1-\exp\big(-N\,K(1/4\Vert p)\big).
	\]
\end{lemma}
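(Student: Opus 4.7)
The plan is to derive Lemma~\ref{lem:low2} from Lemma~\ref{lem:low1} by a direct symmetry reduction, using the invariance of the standard Gaussian distribution under negation. I would set $Y_i := -X_i$ for $i = 1,\dots,N$, so each $Y_i$ is again standard Gaussian and the $Y_i$ are i.i.d. Since negation reverses the ranks of a finite sample, the order statistics satisfy $Y_{(k)} = -X_{(N-k+1)}$ for every $k$. Specializing to $k = N-m+1$ yields $Y_{(N-m+1)} = -X_{(m)} = -M$, so that
$$\{\,M < \tau\,\} \;=\; \{\,-M > -\tau\,\} \;=\; \{\,Y_{(N-m+1)} > -\tau\,\}.$$

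Next, I would invoke Lemma~\ref{lem:low1} applied to the sequence $\{Y_i\}$ at the threshold $\tau' := -\tau$. The probability parameter appearing in Lemma~\ref{lem:low1} is $p' := 1-\Phi(\tau') = 1-\Phi(-\tau) = \Phi(\tau)$, which is exactly the $p$ defined in the statement of Lemma~\ref{lem:low2}. Therefore the hypothesis $p < q := m/N$ transfers verbatim, and Lemma~\ref{lem:low1} gives
$$\Pr\!\left(Y_{(N-m+1)} > -\tau\right) \;\ge\; 1 - \exp\!\big(-N\, K(q \Vert p)\big).$$
Combining with the displayed identity above yields $\Pr(M < \tau) \ge 1 - \exp(-N K(q \Vert p))$, which is the claim. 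The specialization $\tau = -2$ is immediate, since $\Phi(-2) = 1-\Phi(2) \approx 0.0224 < 1/4$ so the hypothesis holds with $q = 1/4$.

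I do not expect any substantive obstacle. The entire analytical content, namely the Chernoff/large-deviation bound on a Binomial count of i.i.d.\ standard Gaussians that cross the threshold, is already established in Lemma~\ref{lem:low1}; Lemma~\ref{lem:low2} is its mirror image, obtained ``for free'' from the Gaussian reflection $X \mapsto -X$. The only steps that demand care are the order-statistic reversal identity $Y_{(k)} = -X_{(N-k+1)}$ and the parameter matching $p = \Phi(\tau) = 1-\Phi(-\tau)$, which ensures that the Chernoff exponent $K(q \Vert p)$ is preserved exactly under the reduction. A self-contained alternative would instead introduce the Bernoulli indicators $B_i := \mathbf{1}\{X_i < \tau\}$ with $\mathbb{E}[B_i] = p$, recast $\{X_{(m)} < \tau\}$ in terms of the Binomial count $S_N = \sum_{i=1}^N B_i$, and apply the same Chernoff inequality used in the proof of Lemma~\ref{lem:low1}; this second route mirrors that proof step for step and yields the identical bound.
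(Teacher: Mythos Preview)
Your reduction by the reflection $Y_i=-X_i$ is correct: the order-statistic reversal $Y_{(k)}=-X_{(N-k+1)}$ and the parameter matching $p=\Phi(\tau)=1-\Phi(-\tau)$ transfer the hypothesis and the Chernoff exponent verbatim, so Lemma~\ref{lem:low1} yields the claim immediately. Note that the present paper does not actually prove Lemma~\ref{lem:low2}; it is quoted from \citet{ye2025explicit} without argument, so there is no ``paper's own proof'' to compare against---your symmetry argument (or the equivalent direct Binomial/Chernoff route you sketch at the end) is exactly the kind of derivation one expects and is fully adequate.
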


\begin{lemma}[Gaussian Tail Bound \citep{vershynin2018high,laurent2000adaptive}]
	\label{lem:gaussian-tail}
	Let $X\sim\mathcal N(0,1)$. Then for any $\tau>0$,
	\[
	\Pr(|X|>\tau) \le 2 \exp\!\left( -\frac{\tau^2}{2} \right).
	\]
\end{lemma}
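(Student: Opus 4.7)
The plan is to prove this classical Gaussian tail bound using a Chernoff argument based on the moment generating function (MGF) of the standard normal. This is a textbook calculation, so I do not expect a genuine obstacle; the work reduces to a one-parameter optimization and it is mainly a matter of arranging the steps cleanly.

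First, I would reduce to a one-sided bound using symmetry. Since $-X\sim\cN(0,1)$ whenever $X\sim\cN(0,1)$, we have $\Pr(|X|>\tau)=\Pr(X>\tau)+\Pr(X<-\tau)=2\Pr(X>\tau)$. It therefore suffices to establish the one-sided estimate $\Pr(X>\tau)\le e^{-\tau^2/2}$.

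Next, I would apply the Chernoff bound. For any $\lambda>0$, monotonicity of $x\mapsto e^{\lambda x}$ together with Markov's inequality yields
\[
\Pr(X>\tau) \;=\; \Pr(e^{\lambda X}>e^{\lambda\tau}) \;\le\; e^{-\lambda\tau}\,\EE[e^{\lambda X}].
\]
Computing the MGF by completing the square in the Gaussian density integrand gives
\[
\EE[e^{\lambda X}] \;=\; \int_{\RR}\frac{1}{\sqrt{2\pi}}\,e^{\lambda x-x^2/2}\,dx \;=\; e^{\lambda^2/2},
\]
so the Chernoff bound becomes $\Pr(X>\tau)\le \exp(\lambda^2/2-\lambda\tau)$.

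Finally, I would minimize the right-hand side over $\lambda>0$. The exponent $\lambda^2/2-\lambda\tau$ is a convex quadratic in $\lambda$ whose minimizer on $(0,\infty)$ (valid since $\tau>0$) is $\lambda^\ast=\tau$, attaining the value $-\tau^2/2$. This yields $\Pr(X>\tau)\le e^{-\tau^2/2}$, and doubling gives the stated $\Pr(|X|>\tau)\le 2e^{-\tau^2/2}$. No step presents a subtlety; the completion-of-the-square identity for the Gaussian MGF is the only nontrivial piece of arithmetic, and the optimization over $\lambda$ is immediate. A variant proof by direct integration, using $\int_\tau^\infty e^{-x^2/2}dx\le\int_\tau^\infty (x/\tau)e^{-x^2/2}dx$, would also work and even yields a slightly sharper constant, but the Chernoff route above is the cleanest and matches the standard presentation in \citet{vershynin2018high,laurent2000adaptive}.
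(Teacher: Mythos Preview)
Your proof is correct and follows the standard Chernoff/MGF argument for the Gaussian tail. The paper itself does not supply a proof of this lemma; it simply states the bound and cites \citet{vershynin2018high,laurent2000adaptive}, so your argument is the natural expansion of that citation and matches the textbook treatment referenced there.
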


\begin{lemma}
	\label{lem:gauss_up}
	Let $X_1,\dots,X_N$ be i.i.d.\ $\mathcal N(0,1)$ variables, 
	$M_N=\max_{i\le N}|X_i|$.  Then 
	\[
	\Pr\!\left( M_N \le \sqrt{2\log\frac{2N}{\delta}} \right)
	\ge 1 - \delta, \mbox{ with } 0<\delta<1.
	\]
\end{lemma}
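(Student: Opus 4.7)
The plan is to reduce the maximum bound to a union bound over the individual $|X_i|$ tails, each of which is controlled by Lemma~\ref{lem:gaussian-tail}. Specifically, writing the event $\{M_N > \tau\}$ as the union $\bigcup_{i=1}^N \{|X_i| > \tau\}$, a standard union bound gives
\[
\Pr(M_N > \tau) \;\le\; \sum_{i=1}^{N} \Pr(|X_i|>\tau).
\]

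Next, I would apply Lemma~\ref{lem:gaussian-tail} to each term: since $X_i \sim \mathcal{N}(0,1)$, we have $\Pr(|X_i|>\tau) \le 2\exp(-\tau^2/2)$. Summing over the $N$ indices yields
\[
\Pr(M_N > \tau) \;\le\; 2N\exp\!\left(-\frac{\tau^2}{2}\right).
\]

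Finally, I would choose $\tau$ so that the right-hand side equals $\delta$. Setting $2N\exp(-\tau^2/2) = \delta$ and solving gives $\tau = \sqrt{2\log(2N/\delta)}$. Substituting this choice back and taking complements yields
\[
\Pr\!\left(M_N \le \sqrt{2\log\frac{2N}{\delta}}\right) \;\ge\; 1 - \delta,
\]
which is exactly the claimed bound. There is no substantive obstacle here: the only ingredients are the single-variable Gaussian tail bound (Lemma~\ref{lem:gaussian-tail}) and the union bound, and the inversion $2N e^{-\tau^2/2} = \delta \Longleftrightarrow \tau = \sqrt{2\log(2N/\delta)}$ is purely algebraic. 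The validity condition $0<\delta<1$ ensures $\log(2N/\delta)>0$ so that $\tau$ is a real positive number.
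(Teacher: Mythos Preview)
Your proposal is correct and follows essentially the same approach as the paper: union bound over the events $\{|X_i|>\tau\}$, apply Lemma~\ref{lem:gaussian-tail} to each term to get $\Pr(M_N>\tau)\le 2N e^{-\tau^2/2}$, and then solve $2N e^{-\tau^2/2}\le\delta$ for $\tau$. There is nothing to add.
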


\begin{proof}
	By the union bound,
	\[
	\Pr(M_N>\tau)
	= \Pr\!\left( \bigcup_{i=1}^N \{|X_i|>\tau\} \right)
	\le \sum_{i=1}^N \Pr(|X_i|>\tau) \le 2N e^{-\tau^2/2},
	\]
	where the last inequality is because of Lemma~\ref{lem:gaussian-tail}.
	
	Setting the right-hand side of above equation less than $\delta$, we can obtain $\tau \ge \sqrt{2\log\frac{2N}{\delta}}$ which concludes the proof.
	
\end{proof}

\begin{lemma}\label{lem:u_up}[Lemma~27 of \citet{ye2025explicit}]
	Letting $\ub\sim \cN(0, \bm{I}_d)$ be a $d$-dimensional Gaussian vector, then with probability at least $1-\delta$, it  holds that
	\begin{equation}
		\label{eq:u_norm}
		\norm{\ub}^2 \le 2d + 3\log(1/\delta).
	\end{equation}
\end{lemma}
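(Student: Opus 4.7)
The approach is to recognize $\norm{\ub}^2$ as a chi-squared random variable with $d$ degrees of freedom, since $\ub \sim \cN(\bm{0}, \bm{I}_d)$ decomposes coordinate-wise into $d$ i.i.d.\ standard Gaussians $u_1,\dots,u_d$, and hence $\norm{\ub}^2 = \sum_{i=1}^d u_i^2$ follows the $\chi^2_d$ distribution. Once this identification is made, the statement reduces to a one-sided upper tail bound for $\chi^2_d$, and my plan is to invoke a standard concentration inequality and then simplify the resulting expression via AM--GM to match the exact form stated in the lemma.

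Concretely, I would first apply the classical Laurent--Massart one-sided tail bound: for $X \sim \chi^2_d$ and any $t>0$,
\[
\Pr\bigl(X \geq d + 2\sqrt{d t} + 2t\bigr) \leq e^{-t}.
\]
This is obtained by a standard Chernoff argument using the moment generating function $(1-2\lambda)^{-d/2}$ of $\chi^2_d$ for $\lambda \in (0,1/2)$, optimized over $\lambda$. Setting $t = \log(1/\delta)$ for $0<\delta<1$ then yields that, with probability at least $1-\delta$,
\[
\norm{\ub}^2 \;\leq\; d + 2\sqrt{d \log(1/\delta)} + 2\log(1/\delta).
\]

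To match the cleaner form $2d + 3\log(1/\delta)$ in the statement, I would absorb the cross term using the elementary inequality $2\sqrt{ab} \leq a + b$ with $a=d$ and $b=\log(1/\delta)$, which gives $2\sqrt{d\log(1/\delta)} \leq d + \log(1/\delta)$. Substituting this into the previous display produces
\[
\norm{\ub}^2 \;\leq\; d + \bigl(d + \log(1/\delta)\bigr) + 2\log(1/\delta) \;=\; 2d + 3\log(1/\delta),
\]
which is exactly the claim.

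There is essentially no genuine obstacle: the entire difficulty of the lemma is packaged into the Laurent--Massart tail inequality, which is a well-known textbook result (and is conceptually the same type of tool as Lemma~\ref{lem:gaussian-tail}). If a fully self-contained derivation were required, the only non-routine step would be carrying out the Chernoff optimization on the MGF of $\chi^2_d$ to obtain the sharp constants $2\sqrt{dt}+2t$; but this is standard and the looseness allowed by the AM--GM step means even a slightly cruder tail bound would suffice to reach the stated constants $2d$ and $3\log(1/\delta)$.
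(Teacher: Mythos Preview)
Your proposal is correct. The paper does not give its own proof of this lemma; it simply records it as Lemma~27 of \citet{ye2025explicit}, and your Laurent--Massart-plus-AM--GM derivation is the standard route (indeed, \citet{laurent2000adaptive} is already cited in the paper for Lemma~\ref{lem:gaussian-tail}).
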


\begin{lemma}[Theorem 2.1.5 of \citet{nesterov2013introductory}]\label{lem:L_smth}
	Letting a function $f(\x)$ is $L$-smooth and  convex, then it holds that
	\begin{equation*}
		\norm{\nabla f(\x)}^2 \leq 2L \Big( f(\x) - f(\x^*)\Big).
	\end{equation*}
\end{lemma}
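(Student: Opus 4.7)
The plan is to derive the inequality as a direct consequence of the descent lemma guaranteed by $L$-smoothness, by comparing the value of $f$ at the one-step gradient descent iterate from $\x$ against the global minimum. The central observation is that a single gradient step with step size $1/L$ must reduce $f$ by at least $\frac{1}{2L}\norm{\nabla f(\x)}^2$, and since $\x^*$ is globally optimal (which is what convexity buys us), this reduction cannot exceed $f(\x) - f(\x^*)$.

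Concretely, the first step is to apply Assumption~\ref{ass:L} in its upper-bound form: for any $\y\in\RR^d$,
\begin{equation*}
f(\y) \;\le\; f(\x) + \dotprod{\nabla f(\x),\, \y - \x} + \frac{L}{2}\norm{\y - \x}^2 .
\end{equation*}
Specializing to the gradient step $\y = \x - \tfrac{1}{L}\nabla f(\x)$, the linear term contributes $-\tfrac{1}{L}\norm{\nabla f(\x)}^2$ and the quadratic term contributes $+\tfrac{1}{2L}\norm{\nabla f(\x)}^2$, yielding
\begin{equation*}
f\!\left(\x - \tfrac{1}{L}\nabla f(\x)\right) \;\le\; f(\x) - \frac{1}{2L}\norm{\nabla f(\x)}^2 .
\end{equation*}

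The second step is to invoke convexity, which ensures that $\x^*$ is a \emph{global} minimizer and hence $f(\x^*) \le f(\y)$ for every $\y\in\RR^d$, in particular for the gradient-step iterate above. Chaining the two inequalities gives
\begin{equation*}
f(\x^*) \;\le\; f(\x) - \frac{1}{2L}\norm{\nabla f(\x)}^2,
\end{equation*}
and rearranging delivers $\norm{\nabla f(\x)}^2 \le 2L\bigl(f(\x) - f(\x^*)\bigr)$, as claimed.

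I do not anticipate a real obstacle: the argument is a textbook application of the descent lemma. The only point worth flagging is the precise role of convexity. Smoothness alone supplies the one-step decrease $\tfrac{1}{2L}\norm{\nabla f(\x)}^2$; convexity is invoked solely to guarantee that $\x^*$ minimizes $f$ over all of $\RR^d$ (rather than only locally), which is what legitimizes the bound $f(\x^*) \le f(\x - \tfrac{1}{L}\nabla f(\x))$. Without convexity, the argument would only yield a comparison against a local minimum, which is insufficient for the stated conclusion.
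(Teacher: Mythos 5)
Your proof is correct and is precisely the standard argument behind the cited Theorem 2.1.5 of Nesterov (the paper itself gives no proof, only the citation): apply the descent lemma at the gradient step $\x - \tfrac{1}{L}\nabla f(\x)$ and bound below by $f(\x^*)$ using global optimality. Your remark on the role of convexity is also accurate — the inequality really only needs $\x^*$ to be a global minimizer, which convexity guarantees for any stationary point.
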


\begin{lemma}[Theorem 2.1.10 of \citet{nesterov2013introductory}]\label{lem:str_cvx}
	Letting a function $f(\x)$ is differentiable and $\mu$-strongly convex, then it holds that
	\begin{equation*}
		\norm{\nabla f(\x)}^2 \geq 2\mu \Big( f(\x) - f(\x^*)\Big).
	\end{equation*}
\end{lemma}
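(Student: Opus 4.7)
The plan is to derive the inequality by applying the $\mu$-strong convexity definition and then minimizing the resulting quadratic lower bound over its free argument. This is the standard route to the Polyak--{\L}ojasiewicz inequality for strongly convex functions, and I would follow it here since all the needed ingredients are already at hand from Assumption~2.

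First I would invoke the $\mu$-strong convexity inequality, which states that for any $\x,\y\in\RR^d$,
\[
f(\y) \;\ge\; f(\x) + \dotprod{\nabla f(\x), \y-\x} + \frac{\mu}{2}\norm{\y-\x}^2.
\]
The right-hand side is a quadratic in $\y$ for fixed $\x$, and since $\mu>0$ it is strictly convex and coercive in $\y$. I would minimize it by setting its gradient in $\y$ to zero, yielding the explicit minimizer $\y_\star(\x) = \x - \tfrac{1}{\mu}\nabla f(\x)$. Substituting this back into the bound gives
\[
f(\y) \;\ge\; f(\x) - \frac{1}{2\mu}\norm{\nabla f(\x)}^2 \quad \text{for all } \y\in\RR^d.
\]

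Next I would specialize $\y=\x^*$, where $\x^*$ denotes the (unique) global minimizer of $f$; its existence and uniqueness follow from strong convexity via coercivity and strict convexity of $f$. This yields
\[
f(\x^*) \;\ge\; f(\x) - \frac{1}{2\mu}\norm{\nabla f(\x)}^2,
\]
and rearranging gives $\norm{\nabla f(\x)}^2 \ge 2\mu\big(f(\x)-f(\x^*)\big)$, which is the claimed bound.

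There is no real obstacle in this proof: the argument is essentially a one-line application of ``minimize the quadratic lower bound and evaluate at $\x^*$.'' The only point worth a careful word is the existence of $\x^*$, which is immediate from $\mu$-strong convexity (strict convexity plus coercivity), and the fact that the minimization over $\y$ is unconstrained so that the interior critical point $\y_\star(\x)$ is indeed the global minimum of the quadratic. No further tools beyond the definition of strong convexity are required.
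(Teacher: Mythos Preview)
Your argument is correct and is exactly the standard proof of the Polyak--\L{}ojasiewicz inequality from strong convexity. The paper itself does not give a proof of this lemma; it simply cites it as Theorem~2.1.10 of \citet{nesterov2013introductory}, whose proof follows the same ``minimize the quadratic lower bound and evaluate at $\x^*$'' route you describe.
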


\begin{lemma}\label{lem:recs}
If it holds that $\Delta_{t+1} 
\leq 
\rho_t \Delta_t + \beta_t
$ for $0 \leq \rho_t $ and $1 \leq t$, then we have
\begin{equation*}
\Delta_{t+1}
\leq 
\prod_{i=k}^{t} \rho_i \Delta_{k} 
+ \sum_{i=k}^{t} \prod_{j=i+1}^{t} \rho_j \beta_i, \mbox{ with } 1 \leq k \leq t.
\end{equation*}
\end{lemma}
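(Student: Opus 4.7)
The plan is to prove this by induction on $t$ with $k$ fixed, using the hypothesis that $\rho_t \geq 0$ so that multiplying the inductive inequality through by $\rho_{t+1}$ preserves the direction.

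First I would establish the base case $t = k$. Adopting the standard empty-product convention $\prod_{j=k+1}^{k} \rho_j = 1$, the claimed bound reduces to $\Delta_{k+1} \leq \rho_k \Delta_k + \beta_k$, which is exactly the stated one-step recursion at index $t = k$.

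Next I would carry out the inductive step. Assuming the result for $t$, namely
\begin{equation*}
\Delta_{t+1} \leq \prod_{i=k}^{t} \rho_i \Delta_k + \sum_{i=k}^{t} \prod_{j=i+1}^{t} \rho_j \beta_i,
\end{equation*}
I would apply the one-step recursion $\Delta_{t+2} \leq \rho_{t+1}\Delta_{t+1} + \beta_{t+1}$, then use $\rho_{t+1} \geq 0$ to substitute the inductive bound inside. This yields
\begin{equation*}
\Delta_{t+2} \leq \rho_{t+1}\prod_{i=k}^{t}\rho_i \Delta_k + \sum_{i=k}^{t}\rho_{t+1}\prod_{j=i+1}^{t}\rho_j\beta_i + \beta_{t+1}.
\end{equation*}
Absorbing the leading $\rho_{t+1}$ into the product on the first term gives $\prod_{i=k}^{t+1}\rho_i \Delta_k$, and absorbing it into each factor inside the sum converts $\prod_{j=i+1}^{t}\rho_j$ to $\prod_{j=i+1}^{t+1}\rho_j$. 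Finally the stray $\beta_{t+1}$ is precisely the $i = t+1$ term of $\sum_{i=k}^{t+1}\prod_{j=i+1}^{t+1}\rho_j \beta_i$ (again using the empty-product convention). This closes the induction.

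There is no genuine obstacle here; the only points requiring care are the two bookkeeping conventions (the empty product equals $1$, and nonnegativity of $\rho_t$ allows the inductive bound to be substituted without flipping inequalities). An equivalent presentation would simply unroll the recursion $\Delta_{t+1} \leq \rho_t\Delta_t + \beta_t$ a total of $t-k+1$ times, which yields the same closed form directly, but induction is the cleanest written proof.
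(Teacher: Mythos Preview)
Your proof is correct and follows essentially the same approach as the paper: both arguments unroll the recursion by induction, with the only cosmetic difference that the paper fixes $t$ and inducts downward on $k$ (from $k=t$ to $k-1$, substituting $\Delta_k \le \rho_{k-1}\Delta_{k-1}+\beta_{k-1}$ into the product term), whereas you fix $k$ and induct upward on $t$. The bookkeeping with the empty-product convention and the use of $\rho_t\ge 0$ are handled correctly.
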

\begin{proof}
We prove the result by induction. 
For $k = t$, the result trivially holds.
We assume the result holds for $k\geq 2$, then we prove it will hold for $k+1$. 
\begin{align*}
\Delta_{t+1}
\leq& 
\prod_{i=k}^{t} \rho_i \Delta_{k} 
+ \sum_{i=k}^{t} \prod_{j=i+1}^{t} \rho_j \beta_i\\
\leq&
\prod_{i=k}^{t} \rho_i ( \rho_{k-1} \Delta_{k-1} + \beta_{k-1} )  
+ \sum_{i=k}^{t} \prod_{j=i+1}^{t} \rho_j \beta_i\\
=&
\prod_{i=k-1}^{t} \rho_i \Delta_{k-1}
+
\sum_{i=k-1}^{t} \prod_{j=i+1}^{t} \rho_j \beta_i,
\end{align*}
which concludes the proof.
\end{proof}

\begin{lemma}[proof of Lemma~2 of \citet{rakhlin2012making} ]\label{lem:dec2}
Letting $\Delta_{t+1} \leq \frac{b}{t(t-1)} \sqrt{ \sum_{i=2}^{t} (i-1)^2 \Delta_t} + \frac{c}{t}$ with $t \geq 2$, and $\Delta_t, b, c$ be non-negative, then it holds that
\begin{align*}
\Delta_t \leq \frac{a}{t}, \mbox{ with } a\geq \frac{9b^2}{4} + 3c,\mbox{ for all } t\geq 2.
\end{align*} 
\end{lemma}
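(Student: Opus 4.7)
The plan is to prove the bound $\Delta_t \leq a/t$ for all $t \geq 2$ by strong induction, with $a = \frac{9b^2}{4} + 3c$. Strictly speaking I also need $a \geq 2\Delta_2$ for the base case (otherwise $a$ must be replaced by $\max\{a,\,2\Delta_2\}$, since the recursion at $t=2$ reads $\Delta_3 \leq \tfrac{b}{2}\sqrt{\Delta_2} + \tfrac{c}{2}$ and the stated hypothesis alone does not control $\Delta_2$). The substance of the proof lies in the inductive step, on which I focus below.

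For the inductive step, I assume $\Delta_i \leq a/i$ for every $2 \leq i \leq t$ and aim to derive $\Delta_{t+1} \leq a/(t+1)$. The first move is to control the sum inside the square root via the elementary inequality $(i-1)^2/i \leq i-1$:
\begin{equation*}
\sum_{i=2}^{t} (i-1)^2 \Delta_i \;\leq\; a \sum_{i=2}^{t} \frac{(i-1)^2}{i} \;\leq\; a \sum_{i=2}^{t} (i-1) \;=\; \frac{a\, t(t-1)}{2}.
\end{equation*}
Plugging this into the hypothesis of the lemma yields $\Delta_{t+1} \leq \frac{b\sqrt{a}}{\sqrt{2t(t-1)}} + \frac{c}{t}$.

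The next step is to express both terms in units of $1/(t+1)$. A short calculation shows that for $t \geq 2$ both $\frac{t+1}{\sqrt{2t(t-1)}} \leq \frac{3}{2}$ and $\frac{t+1}{t} \leq \frac{3}{2}$, with equality in both cases at $t = 2$. Hence $\Delta_{t+1} \leq \frac{1}{t+1}\bigl(\tfrac{3}{2} b\sqrt{a} + \tfrac{3}{2}c\bigr)$, reducing the induction to the algebraic inequality $\tfrac{3}{2} b \sqrt{a} + \tfrac{3}{2} c \leq a$. I would finish this by an AM--GM split, writing $3 b \sqrt{a} = 2\sqrt{(9b^2/4)\cdot a} \leq \tfrac{9b^2}{4} + a$, so that $\tfrac{3}{2} b \sqrt{a} + \tfrac{3}{2}c \leq \tfrac{1}{2}\bigl(\tfrac{9b^2}{4} + 3c\bigr) + \tfrac{a}{2} \leq a$ by the hypothesis $a \geq \tfrac{9b^2}{4} + 3c$.

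The main obstacle I expect is not the inductive step itself but the bookkeeping around the base case, since the recursion starts at $t = 2$ and so genuinely needs a bound on $\Delta_2$. The second subtle point is recognising that the constants $9/4$ and $3$ in the definition of $a$ are tight: they are calibrated precisely so that the AM--GM split $2\sqrt{(9b^2/4)\cdot a} \leq 9b^2/4 + a$ and the worst-case ratios $3/2$ coming from $t = 2$ match up exactly, leaving only the harmless slack $9c^2/4$; any smaller choice of $a$ would break either the base case or the inductive step.
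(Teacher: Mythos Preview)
The paper does not give its own proof of this lemma; it simply cites Lemma~2 of \citet{rakhlin2012making}. Your strong-induction argument is precisely the standard one from that reference: bound $\sum_{i=2}^t (i-1)^2\Delta_i$ by $a\,t(t-1)/2$ via the hypothesis, use the worst-case ratios $(t+1)/\sqrt{2t(t-1)}\le 3/2$ and $(t+1)/t\le 3/2$ at $t=2$, and close with the AM--GM split $3b\sqrt{a}\le 9b^2/4+a$.

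Your remark about the base case is correct and is a genuine omission in the lemma as stated here: the recursion begins at $t=2$ and says nothing about $\Delta_2$, so the conclusion really requires $a\ge\max\{2\Delta_2,\,9b^2/4+3c\}$. In the paper's application (Theorem~\ref{thm:main}) this is silently handled by taking the maximum with $f(\x_1)-f(\x^*)$ in the final bound.
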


\bibliography{ref}

\bibliographystyle{plainnat}
\end{document}